\theoremstyle{plain}
\newtheorem{thm}{Theorem}
\newtheorem{lem}[thm]{Lemma}
\newtheorem{prop}[thm]{Proposition}
\newtheorem{cor}[thm]{Corollary}
\theoremstyle{definition}
\newtheorem{defn}[thm]{Definition}
\newtheorem{rem}[thm]{Remark} 
\newtheorem{conj}[thm]{Conjecture}
\newcommand{\C}{\mathbb{C}}
\newcommand{\N}{\mathbb{N}}
\newcommand{\Z}{\mathbb{Z}}
\newcommand{\Q}{\mathbb{Q}}
\newcommand{\R}{\mathbb{R}}
\newcommand{\CA}{\mathcal{A}}
\newcommand{\CO}{\mathcal{O}}
\newcommand{\QQ}{\mathcal{Q}}
\newcommand{\BB}{\mathcal{B}}
\newcommand{\DD}{\mathcal{D}}
\newcommand{\F}{\mathcal{F}}
\newcommand{\K}{\mathcal{K}}
\newcommand{\M}{\mathcal{M}}
\newcommand{\U}{\mathcal U}
\newcommand{\op}{\operatorname}
\newcommand{\Aut}{\op{Aut}}
\newcommand{\id}{\op{id}}
\newcommand*\onto{\ensuremath{\joinrel\relbar\joinrel\twoheadrightarrow}} 
\newcommand*\into{\ensuremath{\lhook\joinrel\relbar\joinrel\rightarrow}}  
\newcommand{\PS}{P}
\numberwithin{thm}{section}
\title[On the $K$-theory of $C^*$-algebras arising from integral dynamics]{On the $K$-theory of $C^*$-algebras arising from \protect\\ integral dynamics}
\author{Sel\c{c}uk Barlak}
\address{Department of Mathematics and Computer Science\\
University of Southern Denmark\\
Campusvej 55\\
DK-5230 Odense M\\
Denmark}
\email{barlak@imada.sdu.dk}
\author{Tron Omland}
\address{Department of Mathematics\\University of Oslo\\P.O.Box 1053 Blindern\\NO-0316 Oslo\\Norway}
\email{trono@math.uio.no}
\author{Nicolai Stammeier}
\address{Department of Mathematics\\University of Oslo\\P.O.Box 1053 Blindern\\NO-0316 Oslo\\Norway}
\email{nicolsta@math.uio.no}
\date{June 8, 2016}
\subjclass[2010]{46L05, 46L80 (Primary) 22D25, 11S82, 55T05 (Secondary)}
\keywords{$K$-theory, Kirchberg algebras, crossed products, boundary quotients, $k$-graphs}
\begin{document}
\begin{abstract}
We investigate the $K$-theory of unital UCT Kirchberg algebras $\QQ_S$ arising from families $S$ of relatively prime numbers. It is shown that $K_*(\QQ_S)$ is the direct sum of a free abelian group and a torsion group, each of which is realized by another distinct $C^*$-algebra naturally associated to $S$. The $C^*$-algebra representing the torsion part is identified with a natural subalgebra $\CA_S$ of $\QQ_S$. For the $K$-theory of $\QQ_S$, the cardinality of $S$ determines the free part and is also relevant for the torsion part, for which the greatest common divisor $g_S$ of $\{p-1 : p \in S\}$ plays a central role as well. In the case where $\lvert S \rvert \leq 2$ or $g_S=1$ we obtain a complete classification for $\QQ_S$. Our results support the conjecture that $\CA_S$ coincides with $\otimes_{p \in S} \CO_p$. This would lead to a complete classification of $\QQ_S$, and is related to a conjecture about $k$-graphs.
\end{abstract}
\maketitle

\section{Introduction}
Suppose $S$ is a non-empty family of relatively prime natural numbers and consider the submonoid of $\N^\times$ generated by $S$. Its action on $\Z$ by multiplication can be represented on $\ell^2(\Z)$ by the bilateral shift $U$ and isometries $(S_p)_{p \in S}$ defined by $U\xi_n = \xi_{n+1}$ and $S_p\xi_n = \xi_{pn}$. The associated $C^*$-algebra $C^*\bigl(U,(S_p)_{p \in S}\bigr)$ admits a universal model $\QQ_S$ that is generated by a unitary $u$ and isometries $(s_p)_{p \in S}$, subject to\vspace*{-2mm}
\[\begin{array}{c} s_ps_q = s_qs_p \text{ for } q  \in S,\quad s_pu=u^ps_p, \quad \text{ and } \quad \sum\limits_{m=0}^{p-1}u^ms_p^{\phantom{*}}s_p^*u^{-m} = 1. \end{array}\vspace*{-2mm}\] 
By results of \cites{KOQ} or \cite{Sta1}, $\QQ_S$ is isomorphic to $C^*\bigl(U,(S_p)_{p \in S}\bigr)$ and belongs to the class of unital UCT Kirchberg algebras. In view of the Kirchberg-Phillips classification theorem \cites{Kir,Phi}, the information on $S$ encoded in $\QQ_S$ can therefore be read off from its $K$-theory.

In special cases, $\QQ_S$ and its $K$-theory have been considered before: If $S$ is the set of all primes, then $\QQ_S$ coincides with the algebra $\QQ_\N$ from \cite{CuntzQ} and it follows that $K_i(\QQ_S) = \Z^\infty$ for $i=0,1$ and $[1]=0$. The other extreme case, where $S=\{p\}$ for some $p \geq 2$, appeared already in \cite{Hir}: Hirshberg showed that $(K_0(\QQ_{\{p\}}),[1],K_1(\QQ_{\{p\}})) = (\Z \oplus \Z/(p-1)\Z,(0,1),\Z)$. This result was recovered later in \cite{KatsuraIV} and \cites{CuntzVershik} as a byproduct. Note that $\QQ_{\{p\}}$ coincides with Katsura's algebra $\CO(E_{p,1})$, see \cite{KatsuraIV}*{Example~A.6}. Moreover, Larsen and Li analyzed the situation for $p=2$ in great detail, see \cite{LarsenLi}. The similarities and differences among these known cases raise several questions: 
\begin{enumerate}[(i)]
\item Is $K_1(\QQ_S)$ always torsion free?
\item Is $2 \in S$ the only obstruction to torsion in $K_0(\QQ_S)$? 
\item What is the $K$-theory of $\QQ_S$ in the general case of $\lvert S \rvert \geq 2$?
\item What does $\QQ_S \cong \QQ_T$ reveal about the relationship between $S$ and $T$?  
\end{enumerate}
Through the present work, we provide a complete description in the case of $\lvert S \rvert=2$, for which the $K$-theory of $\QQ_S$ satisfies 
\[(K_0(\QQ_S),[1],K_1(\QQ_S)) = (\Z^2 \oplus \Z/g_S\Z,(0,1),\Z^2 \oplus \Z/g_S\Z),\] 
where $g_S=\gcd(\{p-1 : p \in S\})$, see Theorem~\ref{thm:main result}~(c). Thus we see that the first two questions from above have a negative answer (for instance, consider $S=\{3,5\}$ and $S=\{5,6\}$, respectively). More generally, we completely determine $K_*(\QQ_S)$ in the case of $\lvert S \rvert \leq 2$ or $g_S = 1$, see Theorem~\ref{thm:main result}, and conclude that $\QQ_S \cong \QQ_T$ if and only if $\lvert S \rvert=\lvert T \rvert$ and $g_S=g_T$ in this case. In addition, Theorem~\ref{thm:main result} substantially reduces the problem in the remaining case of $\lvert S \rvert \geq 3$ and $g_S >1$. Thereby we also make progress towards a general answer to the remaining questions (iii) and (iv) from above.

In order to prove Theorem~\ref{thm:main result}, we first compare the stabilization of $\QQ_S$ to the $C^*$-algebra $C_0(\R) \rtimes N \rtimes H$, where $N=\Z\bigl[\{\frac{1}{p} : p \in S\}\bigr]$, $H$ is the subgroup of $\Q_+^\times$ generated by $S$, and the action comes from the natural $ax+b$-action of $N \rtimes H$ on $\R$, see Section~\ref{sec:comp with real dyn}. This approach is inspired by methods of Cuntz and Li from \cite{CLintegral2}. However, the final part of their strategy is to use the Pimsner-Voiculescu sequence iteratively, see \cite{CLintegral2}*{Remark~3.16}, and depends on having free abelian $K$-groups, which does not work in our situation. Instead, we show that $K_*(\QQ_S)$ decomposes as a direct sum of a free abelian group and a torsion group, both arising in a natural way from two distinguished $C^*$-algebras related to $\QQ_S$, see Theorem~\ref{thm:decomposition of K-theory} and Corollary~\ref{cor:torsion and free part K-theory}. The determination of the torsion free part of $K_*(\QQ_S)$ uses a homotopy argument, and thereby benefits heavily from the comparison with real dynamics. This allows us to prove that the rank of the torsion free subgroup of $K_i(\QQ_S)$ equals $2^{\lvert S \rvert-1}$ for both $i=0,1$, see Proposition~\ref{prop:K-theory torsion free part}. 

The torsion subgroup of $K_*(\QQ_S)$ is realized by the semigroup crossed product $M_{d^\infty} \rtimes^e_\alpha H^+$, where $d$ is the product of all primes dividing some element of $S$, $H^+$ is the submonoid of $\N^\times$ generated by $S$, and the action $\alpha$ is inherited from a semigroup crossed product description of $\QQ_S$, see Corollary~\ref{cor:torsion and free part K-theory}. Appealing to the recently introduced machinery for equivariantly sequentially split $*$-homomorphisms from \cite{BarSza1}, we show that $M_{d^\infty} \rtimes^e_\alpha H^+$ is a unital UCT Kirchberg algebra, just like $\QQ_S$, see Corollary~\ref{cor:UHF into BD seq split cr pr}. Quite intriguingly, this paves the way to identify $M_{d^\infty} \rtimes^e_\alpha H^+$ with the subalgebra $\CA_S = C^*(\{u^ms_p : p \in S, 0 \leq m \leq p-1\})$ of $\QQ_S$, see Corollary~\ref{cor:subalgebra for torsion part}. That is why we decided to name $\CA_S$ the \emph{torsion subalgebra}. This $C^*$-algebra is interesting in its own right as, for instance, it admits a model as the boundary quotient $\QQ(U)$ of a particular right LCM submonoid $U$ of $\N \rtimes H^+$, see Proposition~\ref{prop:A_S as BQ of U}. As explained in Remark~\ref{rem:A_S as BQ of U}, this gives rise to a remarkable diagram for the semigroup $C^*$-algebras and boundary quotients related to the inclusion of right LCM semigroups $U \subset \N \rtimes H^+$. 

With regards to the $K$-theory of $\CA_S$ and hence $\QQ_S$, the $k$-graph description for finite $S$ obtained in Corollary~\ref{cor:tor subalgebra via k-graphs} is more illuminating: The canonical $k$-graph for $\CA_S$ has the same skeleton as the standard $k$-graph for $\bigotimes_{p \in S}\CO_p$, but uses different factorization rules, see Remark~\ref{rem:Lambda_S flip}. It is apparent from the given presentation that $\CA_S$ is isomorphic to $\CO_p$ for $S=\{p\}$. If $S$ consists of two relatively prime numbers $p$ and $q$, then a result from \cite{Evans} shows that $\CA_S$ coincides with $\CO_p \otimes \CO_q$. For the remaining cases, we extract vital information on $\CA_S$ by applying Kasparov's spectral sequence \cite{kasparov} (see also \cite{barlak15}) to the $H^+$-action $\alpha$ on $M_{d^{\infty}}$, see Theorem~\ref{thm:K-theory for A_S}. More precisely, we obtain that $\CA_S$ is isomorphic to $\bigotimes_{p \in S}\CO_p$ if $\lvert S \rvert \leq 2$ or $g_S=1$. In the latter case, it actually coincides with $\CO_2$. Additionally, we show that the order of every element in $K_*(\CA_S)$ divides $g_S^{2^{\lvert S \rvert - 2}}$. As we remark at the end of this work, the same results can be obtained by employing the $k$-graph representation of $\CA_S$ and using Evans' spectral sequence \cite{Evans} for the $K$-theory of $k$-graph $C^*$-algebras. In view of these results, it is very plausible that $\CA_S$ always coincides with $\bigotimes_{p \in S}\CO_p$. This would be in accordance with Conjecture~\ref{conj:k-graph}, which addresses independence of $K$-theory from the factorization rules for $k$-graphs under certain constraints. If $\CA_S \cong \bigotimes_{p \in S}\CO_p$ holds for all $S$, then we get a complete classification for $\QQ_S$ with the rule that $\QQ_S$ and $\QQ_T$ are isomorphic if and only if $\lvert S \rvert = \lvert T \rvert$ and $g_S=g_T$, see Conjecture~\ref{conj:K-theory of QQ_S}.

At a later stage, the authors learned that Li and Norling obtained interesting results for the multiplicative boundary quotient for $\N \rtimes H^+$ by using completely different methods, see \cite{LN2}*{Subsection~6.5}. Briefly speaking, the multiplicative boundary quotient related to $\QQ_S$ is obtained by replacing the unitary $u$ by an isometry $v$, see Subsection~\ref{subsec:BQ for ADS} for details. As a consequence, the $K$-theory of the multiplicative boundary quotient does not feature a non-trivial free part. It seems that $\CA_S$ is the key to reveal a deeper connection between the $K$-theoretical structure of these two $C^*$-algebras. As this is beyond the scope of the present work, we only note that
the inclusion map from $\CA_S$ into $\QQ_S$ factors through the multiplicative boundary quotient as an embedding of $\CA_S$ and the natural quotient map. The results of \cite{LN2} together with our findings indicate that this embedding might be an isomorphism in $K$-theory. This idea is explored further in \cite{Sta3}*{Section~5}.

The paper is organized as follows: In Section~\ref{sec:prelim}, we set up the relevant notation and list some useful known results in Subsection~\ref{subsec:notation and basics}. We then link $\QQ_S$ to boundary quotients of right LCM semigroups, see Subsection~\ref{subsec:BQ for ADS}, and $a$-adic algebras, see Subsection~\ref{subsec:a-adic algs}. These parts explain the central motivation behind our interest in the $K$-theory of $\QQ_S$. In addition, the connection to $a$-adic algebras allows us to apply a duality theorem from \cite{KOQ}, see Theorem~\ref{thm:duality}, making it possible to invoke real dynamics. This leads to a decomposition result for $K_*(\QQ_S)$ presented in Section~\ref{sec:K-theory}, which essentially reduces the problem to determining the $K$-theory of $\CA_S$. The structure of the torsion subalgebra $\CA_S$ is discussed in Section~\ref{sec:torsion part}. Finally, the progress on the classification of $\QQ_S$ we obtain via a spectral sequence argument for the $K$-theory of $\CA_S$ is presented in Section~\ref{sec:classification}.

\subsection*{Acknowledgments}
The first named author was supported by SFB~$878$ \emph{Groups, Geometry and Actions}, GIF Grant~$1137$-$30.6/2011$, ERC through AdG~$267079$, and the Villum Fonden project grant `Local and global structures of groups and their algebras' (2014-2018). The second named author was supported by RCN through FRIPRO~$240913$. A significant part of the work was done during the second named author's research stay at Arizona State University, and he is especially grateful to the analysis group at ASU for their hospitality. He would also like to thank the other two authors of this paper for their hospitality during two visits to M\"{u}nster. The third named author was supported by ERC through AdG~$267079$ and by RCN through FRIPRO~$240362$. We are grateful to Alex Kumjian for valuable suggestions.

\section{Preliminaries}\label{sec:prelim}
\subsection{Notation and basics}\label{subsec:notation and basics}
Throughout this paper, we assume that $S \subset \N^\times \setminus \{1\}$ is a non-empty family of  relatively prime numbers. We write $p|q$ if $q \in p\N^\times$ for $p,q \in \N^\times$. Given $S$, we let $\PS := \{p \in \N^\times : p \text{ prime and } p|q \text{ for some } q \in S\}$. Also, we define $d:= \prod_{p \in \PS}p$ (which is a supernatural number in case $S$ is infinite, see Remark~\ref{rem:supernatural}) and $g_S$ to be the greatest common divisor of $\{p-1 : p \in S\}$, i.e.\ $g_S := \gcd(\{p-1 : p \in S\})$.

Recall that $\N^\times$ is an Ore semigroup with enveloping group $\Q_+^\times$, that is, $\N^\times$ embeds into $\Q_+^\times$ (in the natural way) so that each element $q \in \Q_+^\times$ can be displayed as $p^{-1}q$ with $p,q \in \N^\times$. The subgroup of $\Q_+^\times$ generated by $S$ is denoted by $H$. Note that the submonoid of $\N^\times$ generated by $S$, which we refer to as $H^+$, forms a positive cone inside $H$. As the elements in $S$ are  relatively prime, $H^+$ is isomorphic to the free abelian monoid in $\lvert S \rvert$ generators. Finally, we let $H_k$ be the subgroup of $H$ generated by the $k$ smallest elements of $S$ for $1 \leq k \leq \lvert S \rvert$, and define $H_k^+$ as the analogous submonoid of $H^+$.

Though the natural action of $H^+$ on $\Z$ given by multiplication is irreversible, it has a natural extension to an action of $H$ by automorphisms, namely by acting upon the ring extension $\Z\bigl[\{ \frac{1}{p} : p \in S\}\bigr] = \Z\bigl[\{ \frac{1}{p} : p \in \PS\}\bigr]$, that will be denoted by $N$. Within this context we will consider the collection of cosets
\[\begin{array}{c} \F := \{m+ h\Z : m \in \Z, h \in \N^\times, \frac{1}{h} \in N\}. \end{array}\]

\begin{defn}\label{def:Q_S}
$\QQ_S$ is defined to be the universal $C^*$-algebra generated by a unitary $u$ and isometries $(s_p)_{p \in S}$ subject to the relations:
\[\begin{array}{llllll} \textnormal{(i)} & s_p^*s_q^{\phantom{*}} = s_q^{\phantom{*}}s_p^*, \quad & \textnormal{(ii)} & s_pu = u^ps_p, \quad \text{and} \quad & \textnormal{(iii)} & \sum\limits_{m=0}^{p-1} e_{m+p\Z} = 1 
\end{array}\]
for all $p,q \in S, p \neq q$, where $e_{m+p\Z} = u^ms_p^{\phantom{*}}s_p^*u^{-m}$.
\end{defn}

Observe that the notation $e_{m+p\Z}$ is unambiguous, i.e.\ it does not depend on the representative $m$ of the coset $m+p\Z$, as
\[
u^{m+pn}s_p^{\phantom{*}}s_p^*u^{-m-pn} \stackrel{(ii)}{=} u^ms_p^{\phantom{*}}u^{n-n}s_p^*u^{-m} = u^ms_p^{\phantom{*}}s_p^*u^{-m}.
\]

\begin{rem}\label{rem:Q_S basic I}
Let us briefly discuss the defining relations for $\QQ_S$:
\begin{enumerate}[a)]
\item Condition~(i) is known as the double commutation relation for the isometries $s_p$ and $s_q$ with $p \neq q$. In particular, they commute as $s_p^*s_q^{\phantom{*}} = s_q^{\phantom{*}}s_p^*$ implies that $(s_ps_q)^*s_q^{\phantom{*}}s_p^{\phantom{*}} = 1$, which forces $s_ps_q=s_qs_p$. Thus the family $(s_p)_{p \in S}$ gives rise to a representation of the monoid $H^+$ by isometries and we write $s_h$ for $s_{p_1}\cdots s_{p_n}$ whenever $h = p_1 \cdots p_n \in H^+$ with $p_i \in S$. In fact, $u$ and $(s_p)_{p \in S}$ yield a representation of $\Z \rtimes H^+$ due to Definition~\ref{def:Q_S}~(i),(ii).

\item $\QQ_S$ can also be defined as the universal $C^*$-algebra generated by a unitary $u$ and isometries $(s_p)_{p \in H^+}$ subject to (ii), (iii) and
\[
 \textnormal{(i') } s_ps_q = s_{pq} \text{ for all } p,q \in H^+.
\]
By a), we only need to show that (i') implies (i) for $p \neq q$. Note that (i') and (ii) imply that (iii) holds for all $p \in H^+$. In addition, (iii) implies the following: If $r \in S$ and $k \in \Z$ satisfy $s_r^*u^ks_r^{\phantom{*}} \neq 0$, then $k \in r\Z$. As $pq=qp$ and $p\Z \cap q\Z = pq\Z$, we get
\[\begin{array}{lclclclcl}
s_p^*s_q^{\phantom{*}} 
&\stackrel{(iii)}{=}& \sum\limits_{k=0}^{pq-1} s_p^*u^ks_{pq}^{\phantom{*}}s_{pq}^*u^{-k}s_q^{\phantom{*}}
&\stackrel{(i')}{=}& \sum\limits_{k=0}^{pq-1} (s_p^*u^ks_p^{\phantom{*}})s_q^{\phantom{*}}s_p^*(s_q^*u^{-k}s_q^{\phantom{*}})
&=& s_q^{\phantom{*}}s_p^*.
\end{array}\]
\end{enumerate}
\end{rem}

\begin{rem}\label{rem:can rep}
The $C^*$-algebra $\QQ_S$ has a canonical representation on $\ell^2(\Z)$: Let $(\xi_n)_{n \in \Z}$ denote the standard orthonormal basis for $\ell^2(\Z)$. If we define $U\xi_n := \xi_{n+1}$ and $S_p\xi_n := \xi_{pn}$, then it is routine to verify that $U$ and $(S_p)_{p \in S}$ satisfy (i)--(iii) from Definition~\ref{def:Q_S}. $\QQ_S$ is known to be simple, see \cite{Sta1}*{Example~3.29~(a) and Proposition~3.2} for proofs and Proposition~\ref{prop:Q_S as O-alg} for the connection to \cite{Sta1}. Therefore, the representation from above is faithful and $\QQ_S$ can be regarded as a subalgebra of $B(\ell^2(\Z))$.
\end{rem}

\begin{rem}\label{rem:Q_S for 2 and N}
For the case of $S=\{\text{all primes}\}$, the algebra $\QQ_S$ coincides with $\QQ_\N$ as introduced by Cuntz in \cite{CuntzQ}. Moreover, $S=\{2\}$ yields the \emph{$2$-adic ring $C^*$-algebra of the integers} that has been studied in detail by Larsen and Li in \cite{LarsenLi}.
\end{rem}

\begin{defn} \label{def:D_S}
The commutative subalgebra of $\QQ_S$ generated by the projections $e_{m+h\Z}= u^ms_h^{\phantom{*}}s_h^*u^{-m}$ with $m \in \Z$ and $h \in H^+$ is denoted by $\DD_S$.
\end{defn}

\begin{rem}\label{rem:Q_S basic II}
We record the following observations:
\begin{enumerate}[a)]
\item In view of Remark~\ref{rem:can rep}, $e_{m+h\Z}$ can be regarded as the orthogonal projection from $\ell^2(\Z)$ onto $\ell^2(m+h\Z)$.  
\item With regards to a), the projections $e_{m+h\Z}$ correspond to certain cosets from $\F$. However, projections arising as sums of such elementary projections may lead to additional cosets: If $h \in \N^\times$ belongs to the submonoid generated by $\PS$, then there is $h' \in H^+$ so that $h'=h\ell$ for some $\ell \in \N^\times$. Therefore, we get 
\[\begin{array}{c} e_{m+h\Z} = \sum\limits_{k=0}^{\ell-1} e_{m+hk + h'\Z}\end{array}\] 
and $e_{m+h\Z} \in \DD_S$ for all such $h$. In fact, $\F$ equals the collection of all cosets for which the corresponding projection appears in $\DD_S$, that is, the projection is expressible as a finite sum of projections $e_{m_i+h_i\Z}$ with $m_i \in \Z$ and $h_i \in H^+$.
\end{enumerate}
\end{rem}

\begin{defn}\label{def:B_S}
The subalgebra of $\QQ_S$ generated by $\DD_S$ and $u$ is denoted by $\BB_S$.
\end{defn}

\begin{rem}\label{rem:BD subalg B_S} 
The $C^*$-algebra $\BB_S$ is isomorphic to the Bunce-Deddens algebra of type $d^\infty$. If $p \in H^+$ and $(e_{i, j}^{(p)})_{0 \leq i,j \leq p-1}$ denote the standard matrix units in $M_p(\C)$, then there is a unital $*$-homomorphism $M_p(\C) \otimes C^*(\Z) \to \BB_S$ mapping $e^{(p)}_{m,n} \otimes u^k$ to $e_{m+p\Z}u^{m-n+pk}$. Given another $q \in H^+$, the so constructed $*$-homomorphisms associated with $p$ and $pq$ are compatible with the embedding $\iota_{p,pq}\colon M_p(\C) \otimes C^*(\Z) \to M_{pq}(\C) \otimes C^*(\Z)$ given by $e_{i, j}^{(p)} \otimes 1 \mapsto \sum_{k=0}^{q-1}e_{i+pk, j+pk}^{(pq)} \otimes 1$ and $1 \otimes u \mapsto 1 \otimes u^q$. The inductive limit associated with $(M_p(\C) \otimes C^*(\Z),\iota_{p,pq})_{p,q \in H^+}$, where $H^+ \subset \N^\times$ is directed set in the usual way, is isomorphic to the Bunce-Deddens algebra of type $d^\infty$. Moreover, under this identification, the natural UHF subalgebra $M_{d^\infty}$ of the Bunce-Deddens algebra corresponds to the $C^*$-subalgebra of $\BB_S$ generated by all elements of the form $e_{m+p\Z}u^{m-n}$ with $p\in H^+$ and $0\leq m,n \leq p-1$.

There is a natural action $\alpha$ of $\Z \rtimes H^+$ on $\BB_S$ given by  $\alpha_{(k,p)}(x) = u^ks_pxs_p^*u^{-k}$ for $(k,p) \in \Z \rtimes H^+$. Under the above identification, $M_{d^\infty} \subset \BB_S$ is invariant under the restricted $H^+$-action, as for $p,q \in H^+$ and $0\leq m,n \leq p-1$,
\[
s^{\phantom{*}}_qe^{\phantom{*}}_{m+p\Z}u^{m-n}s_q^* = s_q^{\phantom{*}} u^ms^{\phantom{*}}_ps_p^*u^{-n} s_q^* = u^{qm}s_{pq}^{\phantom{*}}s_{pq}^*u^{-qn} =e^{\phantom{*}}_{qm+pq\Z}u^{qm-qn}. 
\]
\vspace*{0mm}
\end{rem}

Another way to present the algebra $\QQ_S$ is provided by the theory of semigroup crossed products. Recall that, for an action $\beta$ of a discrete, left cancellative semigroup $T$ on a unital $C^*$-algebra $B$ by $*$-endomorphisms, a unital, covariant representation of $(B, \beta, T)$ is given by a unital $*$-homomorphism $\pi$ from $B$ to some unital $C^*$-algebra $C$ and a semigroup homomorphism $\varphi$ from $T$ to the isometries in $C$ such that the covariance condition $\varphi(t)\pi(b)\varphi(t)^* = \pi(\beta_t(b))$ holds for all $b \in B$ and $t \in T$. The semigroup crossed product $B \rtimes^{e}_\beta T$ is then defined as the $C^*$-algebra generated by a universal unital, covariant representation $(\iota_B,\iota_T)$ of $(B, \beta, T)$. We refer to \cite{LacRae} for further details. Note that if $T$ is a group, then this crossed product agrees with the full group crossed product $B \rtimes_\beta T$. Semigroup crossed products may be pathological or extremely complicated in some cases. But we will only be concerned with crossed products of left Ore semigroups acting by injective endomorphisms so that we maintain a close connection to group crossed products, see \cite{Lac}. With respect to $\QQ_S$, we get isomorphisms
\begin{equation}\label{eq:B_S and Q_S as crossed products}
\QQ_S \cong \DD_S \rtimes^e_\alpha \Z \rtimes H^+ \cong \BB_S \rtimes^e_\alpha H^+ \quad\text{and}\quad \BB_S \cong \DD_S\rtimes_\alpha \Z,
\end{equation}
see \cite{Sta1}*{Proposition~3.18 and Theorem~A.5}. Remark~\ref{rem:BD subalg B_S} reveals that the canonical subalgebra $M_{d^\infty} \subset \BB_S$ and the isometries $(s_p)_{p \in H^+}$ give rise to a unital, covariant representation of $(M_{d^{\infty}},\alpha,H^+)$ on $\QQ_S$. We will later see that this representation is faithful so that we can view $M_{d^\infty} \rtimes^e_\alpha H^+$ as a subalgebra of $\QQ_S$, see Corollary~\ref{cor:subalgebra for torsion part}.

\subsection{Boundary quotients} \label{subsec:BQ for ADS}
The set $S \subset \N^\times\setminus\{1\}$ itself can be thought of as a data encoding a dynamical system, namely the action $\theta$ of the free abelian monoid $H^+ \subset \N^\times$ on the group $\Z$ given by multiplication. $\theta_h$ is injective for $h \in H^+$ and surjective only if $h=1$. Furthermore, as every two distinct elements $p$ and $q$ in $S$ are relatively prime, we have $\theta_p(\Z)+\theta_q(\Z)=\Z$. Hence $(\Z,H^+,\theta)$ forms an \emph{irreversible algebraic dynamical system} in the sense of \cite{Sta1}*{Definition 1.5}, compare \cite{Sta1}*{Example~1.8~(a)}. In fact, dynamics of this form were one of the key motivations for \cite{Sta1}. In order to compare the $C^*$-algebra $\CO[\Z,H^+,\theta]$ from \cite{Sta1}*{Definition 3.1} with $\QQ_S$, let us recall the definition:

\begin{defn}\label{def:O-alg for IADS}
Let $(G,P,\theta)$ be an irreversible algebraic dynamical system. Then $\CO[G,P,\theta]$ is the universal $C^*$-algebra generated by a unitary representation $u$ of the group $G$ and a representation $s$ of the semigroup $P$ by isometries subject to the relations:
\[\begin{array}{lrcl}
(\text{CNP }1) & s_{p}u_{g} &\hspace*{-2.5mm}=\hspace*{-2.5mm}& u_{\theta_{p}(g)}s_{p},\vspace*{2mm}\\
(\text{CNP }2) & s_{p}^{*}u_gs_{q} &\hspace*{-2.5mm}=\hspace*{-2.5mm}& \begin{cases} 
u_{g_1}s_{\gcd(p,q)^{-1}q}^{\phantom{*}}s_{\gcd(p,q)^{-1}p}^{*}u_{g_2}& \text{ if } g = \theta_p(g_1)\theta_q(g_2),\\ 0& \text{ else,}\end{cases}\vspace*{2mm}\\
(\text{CNP }3) & 1 &\hspace*{-2.5mm}=\hspace*{-2.5mm}& \sum\limits_{[g] \in G/\theta_{p}(G)}{e_{g,p}} \hspace*{2mm}\text{ if } [G : \theta_{p}(G)]< \infty,
\end{array}\]
where $e_{g,p} = u_{g}s_{p}s_{p}^{*}u_{g}^{*}$.
\end{defn}

Clearly, Definition~\ref{def:Q_S}~(ii) is the same as (CNP $1$). As $[\Z:h\Z] = h < \infty$ for every $h \in H^+$, (iii) corresponds to (CNP $3$) once we use Remark~\ref{rem:Q_S basic I}~a) and note that it is enough to have the summation relation for a set of generators of $H^+$. The case of distinct $p,q \in S$ and $g=0$ in (CNP $2$) yields (i). On the other hand, a slight modification of the argument in Remark~\ref{rem:Q_S basic I}~b) with $s_p^*u^ms_q$ in place of $s_p^*s_q$ establishes (CNP $2$) based on (i)--(iii).
Thus we arrive at:

\begin{prop}\label{prop:Q_S as O-alg}
The $C^*$-algebras $\QQ_S$ and $\CO[\Z,H^+,\theta]$ are canonically isomorphic.
\end{prop}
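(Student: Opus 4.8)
The plan is to establish the isomorphism by constructing mutually inverse $*$-homomorphisms at the level of the universal $C^*$-algebras, invoking only the universal properties of $\QQ_S$ and $\CO[\Z,H^+,\theta]$ together with the remarks already assembled in the excerpt. First I would verify that the canonical generators $u$ and $(s_p)_{p \in S}$ of $\QQ_S$ satisfy the Cuntz--Nica--Pimsner relations (CNP $1$)--(CNP $3$) for the dynamical system $(\Z, H^+, \theta)$, where $\theta_h$ is multiplication by $h$. Relation (CNP $1$) is literally Definition~\ref{def:Q_S}~(ii). For (CNP $3$), since $[\Z : h\Z] = h < \infty$ for every $h \in H^+$ and $\Z/h\Z$ has the cosets $m + h\Z$ ($0 \leq m \leq h-1$) as representatives, the sum $\sum_{[g]} e_{g,h} = \sum_{m=0}^{h-1} u^m s_h s_h^* u^{-m}$ equals $1$; by Remark~\ref{rem:Q_S basic I}~a) the isometries $s_h$ for $h \in H^+$ are well defined and multiplicative, and it suffices to check the summation relation on the generating set $S$, where it is exactly Definition~\ref{def:Q_S}~(iii). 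For (CNP $2$), one reuses the computation in Remark~\ref{rem:Q_S basic I}~b) verbatim with $s_p^* u^k s_q$ in place of $s_p^* s_q$: inserting $\sum_{k'} u^{k'} s_{pq} s_{pq}^* u^{-k'}$, using $pq = qp$, $p\Z \cap q\Z = pq\Z$, and the fact that $s_r^* u^k s_r \neq 0$ forces $r \mid k$, one obtains precisely the case distinction in (CNP $2$) (here $\gcd(p,q)=1$ for distinct $p,q \in S$, so the formula simplifies). By the universal property of $\CO[\Z,H^+,\theta]$, this yields a unital $*$-homomorphism $\Phi \colon \CO[\Z,H^+,\theta] \to \QQ_S$ sending generators to generators.

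Conversely, I would check that the canonical generators of $\CO[\Z,H^+,\theta]$ satisfy Definition~\ref{def:Q_S}~(i)--(iii). Relation (ii) is (CNP $1$) again; relation (iii) is the special case $p \in S$ of (CNP $3$); and relation (i), the double commutation $s_p^* s_q = s_q s_p^*$ for distinct $p,q \in S$, is the instance $g = 0$ of (CNP $2$), since $0 = \theta_p(0)\theta_q(0)$ and $\gcd(p,q)=1$ gives $s_{\gcd(p,q)^{-1}q} s_{\gcd(p,q)^{-1}p}^* = s_q s_p^*$. Hence the universal property of $\QQ_S$ produces a unital $*$-homomorphism $\Psi \colon \QQ_S \to \CO[\Z,H^+,\theta]$, again sending generators to generators. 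Since $\Phi$ and $\Psi$ are inverse to each other on the respective generating sets, and these sets generate the algebras, $\Psi \circ \Phi = \id$ and $\Phi \circ \Psi = \id$; therefore $\Phi$ is an isomorphism, and by construction it is the canonical one.

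The only genuine subtlety — the step I expect to be the main obstacle, though it is more bookkeeping than difficulty — is the careful verification of (CNP $2$) in $\QQ_S$ for \emph{all} pairs $p,q \in H^+$ and all $g \in \Z$, not merely the generators. One must handle the general $g$ (not just $g=0$) and non-coprime $p,q$ (e.g. $p \mid q$ within $H^+$), where $\gcd(p,q)$ is a nontrivial element of $H^+$ and the reduced isometries $s_{\gcd(p,q)^{-1}q}$ appear. The mechanism is still the one from Remark~\ref{rem:Q_S basic I}~b): expand $s_p$ and $s_q$ via the summation relation associated with $\operatorname{lcm}(p,q) \in H^+$, use $p\Z \cap q\Z = \operatorname{lcm}(p,q)\Z$ and the relation $s_p s_q = s_{pq}$, and track which indices survive using $s_r^* u^k s_r \neq 0 \iff r \mid k$. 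One should also note that it suffices to verify (CNP $2$) for $p, q \in S$ and then bootstrap to all of $H^+$ using (CNP $1$) and multiplicativity of $s$, exactly as the excerpt indicates for (CNP $3$); this keeps the case analysis finite and transparent. Everything else is a direct appeal to the universal properties, and no deep structural input (simplicity, $K$-theory, the Kirchberg--Phillips theorem) is needed for this particular identification.
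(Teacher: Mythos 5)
Your proposal is correct and follows essentially the same route as the paper: the authors likewise check that the generators of each algebra satisfy the defining relations of the other (with (CNP $1$) matching Definition~\ref{def:Q_S}~(ii), (CNP $3$) reducing to (iii) on the generating set $S$, the $g=0$ case of (CNP $2$) giving (i), and the general case of (CNP $2$) obtained by running the computation of Remark~\ref{rem:Q_S basic I}~b) with $s_p^*u^ms_q$ in place of $s_p^*s_q$) and then invoke the two universal properties. Your extra care about verifying (CNP $2$) for all of $H^+$ and all $g$ is exactly the "slight modification" the paper alludes to.
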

According to \cite{Sta1}*{Corollary~3.28 and Example~3.29~(a)} $\QQ_S$ is therefore a unital UCT Kirchberg algebra. While classification of $\CO[G,P,\theta]$ by $K$-theory was achieved in \cite{Sta1} for irreversible algebraic dynamical systems $(G,P,\theta)$ under mild assumptions, and even generalized to \emph{algebraic dynamical systems} in \cite{bsBQforADS}, the range of the classifying invariant remained a mystery beyond the case of a single group endomorphism, where the techniques of \cite{CuntzVershik} apply. It thus seemed natural to go back to examples of dynamical systems involving $P = \N^k$ and try to understand the invariant in this case. In other words, our path lead back to $\QQ_S$, and the present work aims at making progress precisely in this direction. 
\vspace*{1em}

\noindent There is also an alternative way of constructing $\QQ_S$ directly from either of the semigroups $\N \rtimes H^+$ or $\Z \rtimes H^+$ using the theory of boundary quotients of semigroup $C^*$-algebras. To begin with, let us note that $(\N \rtimes H^+,N \rtimes H)$ forms a quasi lattice-ordered group. Hence we can form the Toeplitz algebra $\mathcal{T}(\N \rtimes H^+,N \rtimes H)$ using the work of Nica, see \cite{Nic}. But $\Z \rtimes H^+$ has non-trivial units, so it cannot be part of a quasi lattice-ordered pair. In order to treat both semigroups within the same framework, let us instead employ the theory of semigroup $C^*$-algebras from \cite{Li1}, which generalizes Nica's approach tremendously. 

We note that both $\N \rtimes H^+$ and $\Z \rtimes H^+$ are cancellative, countable, discrete semigroups with unit. Moreover, they are \emph{right LCM} semigroups, meaning that the intersection of two principal right ideals is either empty or another principal right ideal (given by a right least common multiple for the representatives of the two intersected ideals). Thus their semigroup $C^*$-algebras both enjoy a particularly nice and tractable structure, see \cites{BLS1,BLS2}. Additionally, both are left Ore semigroups with amenable enveloping group $N \rtimes H \subset \Q \rtimes \Q_+^\times$. However, we would like to point out that $\N \rtimes H^+$ and $\Z \rtimes H^+$ are not left amenable (but right amenable) as they fail to be left reversible, see \cite{Li1}*{Lemma~4.6} for details.

Roughly speaking, semigroup $C^*$-algebras have the flavor of Toeplitz algebras. In particular, they tend to be non-simple except for very special situations. Still, we might hope for $\QQ_S$ to be a quotient of $C^*(\N \rtimes H^+)$ or $C^*(\Z \rtimes H^+)$ obtained through some systematic procedure. This was achieved in \cite{LacRae} for $\N \rtimes \N^\times$, i.e.\ $S$ consisting of all primes, by showing that the boundary quotient of $\mathcal{T}(\N \rtimes \N^\times,\Q \rtimes \Q_+^\times) = C^*(\N \rtimes \N^\times)$ in the sense of \cite{CrispLaca} coincides with $\QQ_\N$. Recently, this concept of a boundary quotient for a quasi lattice-ordered group from \cite{CrispLaca} was transferred to semigroup $C^*$-algebras in the context of right LCM semigroups, see \cite{BRRW}*{Definition 5.1}: 

\begin{defn}\label{def:FS and boundary quotient}
Let $T$ be a right LCM semigroup. A finite set $F \subset T$ is called a \emph{foundation set} if, for all $t\in T$, there is $f \in F$ satisfying $tT \cap fT \neq \emptyset$. The \emph{boundary quotient} $\QQ(T)$ of a right LCM semigroup $T$ is the quotient of $C^*(T)$ by the relation 
\begin{equation}\label{eq:BQ}
\begin{array}{c}\prod\limits_{f \in F} (1-e_{fT}) = 0 \quad \text{ for all foundation sets } F.\end{array}
\end{equation}
\end{defn}

To emphasize the relevance of this approach, let us point out that right LCM semigroups are much more general than quasi lattice-ordered groups. For instance, right cancellation may fail, so right LCM semigroups need not embed into groups.

On the one hand, this notion of a quotient of a semigroup $C^*$-algebra seems suitable as $\N \rtimes H^+$ and $\Z \rtimes H^+$ are right LCM semigroups. On the other hand, the abstract condition~\eqref{eq:BQ} prohibits an immediate identification of $\QQ_S$ with $\QQ(\N \rtimes H^+)$ or $\QQ(\Z \rtimes H^+)$. This gap has been bridged successfully through \cite{bsBQforADS}: 

\begin{prop}\label{prop:Q_S as BQ}
There are canonical isomorphisms $\QQ_S \cong \QQ(\Z \rtimes H^+) \cong \QQ(\N \rtimes H^+)$.
\end{prop}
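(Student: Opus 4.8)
The plan is to exhibit explicit generators for each of $\QQ(\Z \rtimes H^+)$ and $\QQ(\N \rtimes H^+)$ and to check that these satisfy exactly the defining relations of $\QQ_S$ from Definition~\ref{def:Q_S}, invoking the universal property of $\QQ_S$ in one direction and the universal property of the boundary quotient in the other. For $\QQ(\Z \rtimes H^+)$, write $(k,h)$ for a generic element of $\Z \rtimes H^+$ with $k \in \Z$, $h \in H^+$. The left regular representation sends $(k,h)$ to an isometry on $\ell^2(\Z \rtimes H^+)$; since $\Z$ is a group, the image of $(k,1)$ is a unitary. So inside $C^*(\Z \rtimes H^+)$ set $u := v_{(1,1)}$ and $s_p := v_{(0,p)}$ for $p \in S$. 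The semigroup relations in $\Z \rtimes H^+$ immediately give $s_p u = u^p s_p$ (which is (CNP~1) / Definition~\ref{def:Q_S}~(ii)) and $s_p s_q = s_q s_p$ for $p,q \in S$. The content is then to show that relation~\eqref{eq:BQ} for foundation sets is equivalent, in the presence of the semigroup relations, to relations (i) and (iii) of Definition~\ref{def:Q_S}. The key observation is that $e_{(0,p)(\Z \rtimes H^+)} = s_p s_p^* = e_{p\Z}$ in the notation of the excerpt, that the family $\{(m,p) : 0 \le m \le p-1\}$ is a foundation set for each fixed $p \in S$ (its principal right ideals are the cosets $m + p\Z$ inside $\Z$, which cover $\Z$), and, conversely, that every foundation set is refined by one of this shape together with the double-commutation consequences — here one uses that $H^+$ is the free abelian monoid on $S$ and that $p\Z \cap q\Z = pq\Z$ for distinct $p,q \in S$, exactly as in Remark~\ref{rem:Q_S basic I}~b). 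Applying~\eqref{eq:BQ} to the foundation set $\{(m,p) : 0 \le m \le p-1\}$ yields $\prod_{m=0}^{p-1}(1 - u^m s_p s_p^* u^{-m}) = 0$; since the projections $e_{m+p\Z}$ are pairwise orthogonal (a consequence of the covariance already available in $C^*(\Z \rtimes H^+)$), this product being zero forces $\sum_{m=0}^{p-1} e_{m+p\Z} = 1$, which is Definition~\ref{def:Q_S}~(iii). Relation (i) then follows from (ii), (iii) as in Remark~\ref{rem:Q_S basic I}~b). This produces a surjective $*$-homomorphism $\QQ_S \to \QQ(\Z \rtimes H^+)$; for injectivity one uses simplicity of $\QQ_S$ (Remark~\ref{rem:can rep}) together with the fact that the left regular representation of $\Z \rtimes H^+$ descends to a nonzero representation of the boundary quotient, so the map cannot be zero.

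For $\QQ(\N \rtimes H^+)$ one argues similarly but must first replace the \emph{isometry} $v_{(1,1)}$ (the image of $(1,1)\in \N \rtimes H^+$, which is no longer unitary) by a genuine unitary. This is where the extra foundation-set relations in $\N \rtimes H^+$ do their work: the singleton-type foundation sets coming from $\{(0,1),(1,1)\}$ — more precisely the covering of $\N$ by $\N$ and $1+\N$ — force $1 - v_{(0,1)}^{\phantom{*}}v_{(0,1)}^* - (1 - v_{(1,1)}^{\phantom{*}}v_{(1,1)}^*)\cdots$ to vanish in a way that makes $v_{(1,1)}$ a unitary in the quotient; equivalently, the image of $\N$ in $\QQ(\N \rtimes H^+)$ is a unitary representation of $\Z$. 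Once $u := v_{(1,1)}$ is unitary, the verification of Definition~\ref{def:Q_S}~(i)--(iii) proceeds exactly as in the $\Z \rtimes H^+$ case, since the remaining foundation sets of $\N \rtimes H^+$ restrict, after passing to cosets, to the same covering data $m + p\Z$. Alternatively — and this is cleaner — one shows directly that the inclusion $\N \rtimes H^+ \hookrightarrow \Z \rtimes H^+$ induces an isomorphism $\QQ(\N \rtimes H^+) \xrightarrow{\ \cong\ } \QQ(\Z \rtimes H^+)$ by checking that it is equivariant for the respective boundary relations and that it becomes invertible after killing the boundary ideal, so that the two boundary quotients coincide; this reduces Proposition~\ref{prop:Q_S as BQ} to the single identification $\QQ_S \cong \QQ(\Z \rtimes H^+)$ done above. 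Both routes are available since \cite{bsBQforADS} establishes the general principle that the boundary quotient of the semigroup $C^*$-algebra attached to an algebraic dynamical system agrees with the Cuntz--Nica--Pimsner-type algebra $\CO[G,P,\theta]$, which by Proposition~\ref{prop:Q_S as O-alg} is $\QQ_S$; our task is merely to spell out the identification in this concrete case.

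The main obstacle I expect is bookkeeping on \emph{foundation sets}: one must prove that \eqref{eq:BQ} for \emph{all} foundation sets of $\Z \rtimes H^+$ (respectively $\N \rtimes H^+$) is already a consequence of its instances for the finitely many "coordinate" foundation sets $\{(m,p) : 0 \le m \le p-1\}$, $p \in S$, modulo the relations already present in the semigroup $C^*$-algebra. This is a combinatorial statement about right ideals in $\Z \rtimes H^+$: every principal right ideal $(k,h)(\Z \rtimes H^+)$ has the form $(k + h\Z) \times h'(\cdot)$ and intersections of such are governed by intersections of arithmetic progressions and by the lattice structure of $H^+ \cong \N^{|S|}$, so a finite foundation set can be "refined" to a common one built from the coordinate ones using $p\Z \cap q\Z = pq\Z$ and the Chinese-Remainder-type factorization $m + pq\Z = \bigsqcup_{j} (m + pj + \cdots)$. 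Handling this refinement carefully — and, on the $\N \rtimes H^+$ side, the preliminary step that forces the translation generator to become unitary — is the only genuinely nontrivial part; everything else is an application of the universal properties together with simplicity of $\QQ_S$.
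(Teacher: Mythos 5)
Your overall strategy is sound and, at bottom, the same as the paper's: the paper identifies $\QQ(\Z\rtimes H^+)$ with $\CO[\Z,H^+,\theta]\cong\QQ_S$ by citing \cite{bsBQforADS}*{Corollary~4.2}, and then passes to $\QQ(\N\rtimes H^+)$ by showing that accurate foundation sets for the two semigroups determine the same right ideals (using \cite{bsBQforADS}*{Proposition~2.4} to reduce \eqref{eq:BQ} to accurate foundation sets) --- which is precisely your ``cleaner'' alternative route. What you propose to do by hand for $\Z\rtimes H^+$ is essentially a re-proof of the cited corollary; the combinatorial refinement step you correctly single out as the main obstacle (reducing \eqref{eq:BQ} for arbitrary foundation sets to the coordinate ones $\{(m,p): 0\le m\le p-1\}$) is exactly the content that the paper outsources to the reference, so you would need to carry it out in full, not merely sketch it via the Chinese Remainder picture.

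Two concrete steps in your write-up would fail as stated. First, the left regular representation of $\Z\rtimes H^+$ does \emph{not} descend to the boundary quotient: already for the foundation set $\{(m,p):0\le m\le p-1\}$ the union of the corresponding right ideals is $\Z\rtimes pH^+\subsetneq \Z\rtimes H^+$, so $\sum_m e_{(m,p)(\Z\rtimes H^+)}\neq 1$ on $\ell^2(\Z\rtimes H^+)$ (compare the unilateral shift, which does not become unitary in $C^*_r(\N)$). To see that $\QQ(\Z\rtimes H^+)\neq 0$ --- which is what you actually need before invoking simplicity of $\QQ_S$ --- use instead the representation $(k,h)\mapsto U^kS_h$ on $\ell^2(\Z)$ from Remark~\ref{rem:can rep}, which does satisfy all the boundary relations. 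Second, for $\N\rtimes H^+$ the foundation set $\{(0,1),(1,1)\}$ gives no information: since $(0,1)(\N\rtimes H^+)=\N\rtimes H^+$, the product in \eqref{eq:BQ} contains the factor $1-e_{(0,1)(\N\rtimes H^+)}=0$ and the relation is vacuous. What forces $v_{(1,1)}$ to be a unitary is that the \emph{singleton} $\{(1,1)\}$ is already a foundation set (every principal right ideal of $\N\rtimes H^+$ meets $(1,1)(\N\rtimes H^+)=\{(m,h):m\ge 1\}$), whence \eqref{eq:BQ} reads $1-v^{\phantom{*}}_{(1,1)}v_{(1,1)}^*=0$. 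Both slips are repairable, but as written those two sentences are false.
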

\begin{proof}
For $\Z \rtimes H^+$, \cite{bsBQforADS}*{Corollary~4.2} shows that $\QQ(\Z \rtimes H^+) \cong \CO[\Z,H^+,\theta]$, and hence $\QQ_S \cong \QQ(\Z \rtimes H^+)$ by Proposition~\ref{prop:Q_S as O-alg}. Noting that $H^+$ is directed, this can also be seen immediately from \cite{bsBQforADS}*{Remark~2.2 and Proposition~4.1}. For $\N\rtimes H^+$, we infer from \cite{bsBQforADS}*{Example~2.8~(b)} that it suffices to consider \emph{accurate foundation sets} $F$ for \eqref{eq:BQ} by \cite{bsBQforADS}*{Proposition~2.4}, that is, $F$ consists of elements with mutually disjoint principal right ideals. Now $F \subset \N \rtimes H^+$ is an accurate foundation set if and only if it is an accurate foundation set for $\Z \rtimes H^+$. Conversely, given an accurate foundation set $F' =\{(m_1,h_1),\dots,(m_n,h_n)\} \subset \Z \rtimes H^+$, we can replace each $m_i$ by some $m_i' \in m_i+h_i\N$ with $m_i' \in \N$ to get an accurate foundation set $F \subset \N \rtimes H^+$ which uses the same right ideals as $F'$. This allows us to conclude that $\QQ(\Z \rtimes H^+)$ and $\QQ(\N \rtimes H^+)$ are isomorphic.
\end{proof}
 
The fact that $\QQ(\N \rtimes H^+)$ and $\QQ(\Z \rtimes H^+)$ coincide is not at all surprising if we take into account \cite{BaHLR} and view $C^*(\Z \rtimes H^+)$ as the \emph{additive boundary quotient} of $\N \rtimes H^+$. Where there is an additive boundary, there is also a multiplicative boundary, see the boundary quotient diagram in \cite{BaHLR}*{Section~4}: The \emph{multiplicative boundary quotient} of $C^*(\N \rtimes H^+)$ is obtained by imposing the analogous relation to (iii) from Definition~\ref{def:Q_S}, i.e.\ $\sum_{k=0}^{p-1} e_{k+p\N} = 1$ for each $p \in S$. In comparison with Definition~\ref{def:Q_S}, the essential difference is that the semigroup element $(1,1)$ is implemented by a proper isometry $v_{(1,1)}$ instead of a unitary $u$. This multiplicative boundary quotient has been considered in \cite{LN2}*{Subsection~6.5}. As it turns out, its $K$-theory is hard to compute for larger $S$  as it leads to increasingly complicated extension problems of abelian groups. It is quite remarkable that there seems to be a deep common theme underlying the structure of the $K$-theory for both the multiplicative boundary quotient and $\QQ_S$.

\subsection{\texorpdfstring{The $a$-adic algebras}{The a-adic algebras}} \label{subsec:a-adic algs}

Our aim is to compute the $K$-theory of $\QQ_S$, and for this we need to make use of a certain duality result \cite{KOQ}*{Theorem~4.1} that allows us to translate our problem into real dynamics. This will be explained in the next section, but let us first recall the definition and some facts about $a$-adic algebras from \cite{KOQ} and \cite{Oml}, see also \cite{hr}*{Sections~10 and~25} for more on $a$-adic numbers.

Let $a=(a_k)_{k\in\Z}$ be a sequence of numbers in $\N^\times\setminus\{1\}$, and define the \emph{$a$-adic numbers} as the abelian group of sequences
\[\begin{array}{c}
\Omega_a = \left\{ x\in\prod\limits_{k=-\infty}^{\infty}\{0,1,\dotsc,a_k-1\} : \text{there exists $\ell\in\Z$ such that $x_k=0$ for all $k<\ell$}\right\}
\end{array}\]
under addition with carry (that is, like a doubly infinite odometer). The family of all subgroups $\{x\in\Omega_a:x_k=0\text{ for }k<\ell\}$ form a neighborhood basis of the identity. This induces a topology that makes $\Omega_a$ a totally disconnected, locally compact Hausdorff group. The \emph{$a$-adic integers} is the compact open subgroup
\begin{equation}\label{eq:D-spectrum}
\Delta_a=\{x\in\Omega_a:x_k=0\text{ for }k<0\}\subset\Omega_a.
\end{equation}
For $k \in \Z$, define the sequence $(e_k)_\ell=\delta_{k \ell}$. For $k\geq 1$, we may associate the rational number $(a_{-1}a_{-2}\dotsm a_{-k})^{-1}$ with $e_{-k}$ to get an injective group homomorphism from the non-cyclic subgroup
\[\begin{array}{c}
N_a=\left\{\frac{j}{a_{-1}a_{-2}\cdots a_{-k}}:j\in\Z, k\geq 1\right\} \subset \Q
\end{array}\]
into $\Omega_a$ with dense range. Note that $N_a$ contains $\Z \subset \Q$, and by identifying $N_a$ and $\Z \subset N_a$ with their images under the embedding into $\Omega_a$, it follows that $N_a \cap\Delta_a = \Z$.

The subgroups $N_a \cap\{x\in\Omega_a:x_k=0\text{ for }k<\ell\}$ for $\ell\in\Z$ give rise to a subgroup topology of $N_a$, and $\Omega_a$ is the Hausdorff completion (i.e.\ inverse limit completion) of $N_a$ with respect to this filtration. Therefore, the class of $a$-adic numbers $\Omega_a$ comprises all groups that are Hausdorff completions of non-cyclic subgroups of $\Q$. Loosely speaking, the negative part of the sequence $a$ determines a subgroup $N_a$ of $\Q$, and the positive part determines a topology that gives rise to a completion of $N_a$. Given a sequence $a$, let $a^*$ denote the dual sequence defined by $a^*_k=a_{-k}$, and write $N_a^*$ and $\Omega_a^*$ for the associated groups.

Let $H_a$ be any non-trivial subgroup of $\Q^\times_+$ acting on $N_a$ by continuous multiplication, meaning that for all $h\in H_a$, the map $N_a\to N_a$, $x\mapsto hx$ is continuous with respect to the topology described above. The largest subgroup with this property is generated by the primes dividing infinitely many terms of both the positive and negative tail of the sequence $a$, see \cite{KOQ}*{Corollary~2.2}, so we must assume that this subgroup is non-trivial (which holds in the cases we study). Then $H_a$ also acts on $\Omega_a$ by multiplication, and therefore $N_a\rtimes H_a$ acts on $\Omega_a$ by an $ax+b$-action.

\begin{defn}\label{def:Q_a,H}
For a sequence $a=(a_k)_{k \in \Z}$ in $\N^\times\setminus\{1\}$ and a non-trivial subgroup $H_a$ of $\Q^\times_+$ acting by continuous multiplication on $N_a$, the crossed product $\overline{\QQ}(a,H_a) := C_0(\Omega_a)\rtimes N_a\rtimes H_a$ is called the \emph{$a$-adic algebra} of $(a,H_a)$.
\end{defn}

Clearly, interchanging $a$ and $a^*$ and manipulating the position of $a_0$ will not affect any structural property on the level of algebras. In fact, for our purposes, it will usually be convenient to assume that $a=a^*$. Therefore, we will often use the positive tail of the sequence $a$ in the description of $N_a$, and think of $N_a$ as the inductive limit of the system $\left\lbrace (\Z ,\cdot a_k) : k \geq 0 \right\rbrace$ via the isomorphism induced by
\begin{equation}\label{eq:N_a ind lim}
\begin{gathered}
\begin{xy}
	\xymatrix{
	\Z \ar[rr]^{\cdot a_k} \ar[rd]_(0.3){\cdot \tfrac{1}{a_0a_1a_2\dotsm a_{k-1}}} & & \Z \ar[ld]^(0.3){\cdot \tfrac{1}{a_0a_1a_2\dotsm a_{k-1}a_k}}\\
		& N_a &	
	}
\end{xy}
\end{gathered}
\end{equation}

\begin{rem}\label{rem:a-adic stability}
By \cite{KOQ}*{Corollary~2.8} the $a$-adic algebra $\overline{\QQ}(a,H)$ is always a non-unital UCT Kirchberg algebra, hence it is stable by Zhang's dichotomy, see \cite{Z} or \cite{rordam-zd}*{Proposition~4.1.3}.

An immediate consequence of \eqref{eq:N_a ind lim} is that
\[
\begin{array}{c}
C_0(\Omega_a)\rtimes N_a \cong \overline{\bigcup\limits_{k=0}^\infty C(\frac{1}{a_0\dotsm a_k}\Delta_a)\rtimes \frac{1}{a_0\dotsm a_k}\Z}.
\end{array}
\]
Moreover, by writing $\frac{1}{a_0\dotsm a_k}\Delta_a=\Delta_a+(\frac{a_0\dotsm a_k - 1}{a_0\dotsm a_k}+\Delta_a)+\dotsb+(\frac{1}{a_0\dotsm a_k}+\Delta_a)$ and checking how the translation action of $\frac{1}{a_0\dotsm a_k}\Z$ interchanges the components of this sum, one sees that
\[
\begin{array}{c}
C(\frac{1}{a_0\dotsm a_k}\Delta_a)\rtimes \frac{1}{a_0\dotsm a_k}\Z \cong M_{a_0\dotsm a_k}\left(C(\Delta_a)\rtimes\Z\right).
\end{array}
\]
In particular, the natural embeddings of the increasing union above translates into embeddings into the upper left corners. Hence, it follows that $C_0(\Omega_a)\rtimes N_a$ is also stable.
\end{rem}

\begin{rem}\label{rem:supernatural}
A supernatural number is a function $d\colon\{\text{all primes}\} \to \N \cup \{\infty\}$, such that $\sum_{\text{$p$ prime}}d(p) = \infty$, and often written as a formal product $\prod_{\text{$p$ prime}} p^{d(p)}$. It is well known that there is a one-to-one correspondence between supernatural numbers and non-cyclic subgroups of $\Q$ containing $1$, and that the supernatural numbers form a complete isomorphism invariant both for the UHF algebras and the Bunce-Deddens algebras, see \cite{Gl} and \cite{BD}.

Every sequence $a=(a_k)_{k\geq 0}$ defines a function $d_a\colon\{\text{all primes}\} \to \N \cup \{\infty\}$ given by $d_a(p)=\sup\{n \in \N : p^n|a_0a_1 \dotsm a_k \text{ for some } k\geq 0\}$. More intuitively, $d_a$ is thought of as the infinite product $d_a=a_0a_1a_2\dotsm$. Moreover (see e.g.\ \cite{KOQ}*{Lemma~5.1}), we have
\begin{equation}\label{eq:a-adic int}
\begin{array}{c}
\Delta_a\cong\prod\limits_{p\in d_a^{-1}(\infty)}\Z_p\times\prod\limits_{p\in d_a^{-1}(\N)}\Z/p^{d_a(p)}\Z,
\end{array}
\end{equation}
and thus the supernatural numbers are a complete isomorphism invariant for the homeomorphism classes of $a$-adic integers.
\end{rem}

Now, as in Section~\ref{sec:prelim}, let $S$ be a set consisting of relatively prime numbers, and let $H^+$ and $H$ denote the submonoid of $\N^\times$ and the subgroup of $\Q^\times_+$ generated by $S$, respectively. The sequence $a_S$ is defined as follows: Since $H^+$ is a subset of $\N^\times$, its elements can be sorted into increasing order $1<a_{S,0}<a_{S,1}<\dotsb$, where $a_{S,0}=\min S$. Finally, we set $a_{S,k}=a_{S,-k}$ for $k<0$. If $S$ is a finite set, an easier way to form a suitable sequence $a_S$ is to let $q$ denote the product of all elements of $S$, and set $a_{S,k}=q$ for all $k\in\Z$. In both cases, $a_S^*=a_S$ and $N_{a_S}=N$. Henceforth we fix such a sequence $a_S$ and denote $\Omega_{a_S}$ and $\Delta_{a_S}$ by $\Omega$ and $\Delta$, respectively. The purpose of self-duality of $a_S$ is to have $N^*=N$, making the statement of Theorem~\ref{thm:duality} slightly more convenient by avoiding the explicit use of $N^*$. The sequences $a_S$ are the ones associated with supernatural numbers $d$ for which $d(p) \in \{0,\infty\}$ for every prime $p$. In this case \eqref{eq:a-adic int} implies that
\[\begin{array}{c}
\Delta \cong\prod\limits_{p \in \PS}\Z_p \quad\text{and}\quad
\Omega \cong\prod\limits_{p \in \PS}\hspace*{-2mm}' \hspace*{2mm} \Q_p=\prod\limits_{p \in \PS}(\Q_p,\Z_p),  
\end{array}\]
where the latter denotes the restricted product with respect to $\left\lbrace \Z_p : p\in \PS \right\rbrace$.

\begin{rem}\label{rem:spec of D_S}
The spectrum of the commutative subalgebra $\DD_S$ of $\QQ_S$ from Definition~\ref{def:D_S} coincides with the $a$-adic integers $\Delta$ described in \eqref{eq:D-spectrum}. Indeed, for every $X=m+h\Z\in\F$, the projection $e_X$ in $\DD_S$ corresponds to the characteristic function on the compact open subset $m+h\Delta$ of $\Delta$. Moreover, this correspondence extends to an isomorphism between the $C^*$-algebra $\BB_S\cong\DD_S\rtimes\Z$ of Definition~\ref{def:B_S} and $C(\Delta)\rtimes\Z$, which is equivariant for the natural $H^+$-actions on the algebras, both denoted by $\alpha$.
\end{rem}

Let us write $e$ for the projection in $\overline{\QQ}(a_S,H)$ representing the characteristic function on $\Delta$ in $C_0(\Omega)$. It is explained in \cite{Oml}*{Section~11.6} that $e$ is a full projection, and thus, by using Remark~\ref{rem:spec of D_S} together with \eqref{eq:B_S and Q_S as crossed products}, we have
\begin{equation}\label{eq:full corner Q_S}
e\overline{\QQ}(a_S,H)e
\cong (C(\Delta) \rtimes \Z) \rtimes^e_\alpha H^+
\cong (\DD_S \rtimes \Z) \rtimes^e_\alpha H^+
\cong \BB_S \rtimes^e_\alpha H^+
\cong \QQ_S.
\end{equation}
In fact, since $N$ coincides with $(H^+)^{-1}\Z$, the above also follows from \cite{Lac}. Moreover, the argument in \cite{Oml} does not require $H$ to be non-trivial, so it can be used together with Remark~\ref{rem:spec of D_S} and \eqref{eq:B_S and Q_S as crossed products} to get
\begin{equation}\label{eq:full corner B_S}
e(C_0(\Omega) \rtimes N)e
\cong C(\Delta) \rtimes \Z
\cong \DD_S \rtimes \Z
\cong \BB_S.
\end{equation}
Hence, by applying Remark~\ref{rem:a-adic stability} we arrive at the following result:
\begin{prop}\label{prop:stable Q_S}
The stabilization of $\QQ_S$ is isomorphic to $\overline{\QQ}(a_S,H)$, and the stabilization of $\BB_S$ is isomorphic to $C_0(\Omega)\rtimes N$.
\end{prop}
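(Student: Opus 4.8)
The plan is to assemble the two claimed isomorphisms from the corner identifications already established in equations~\eqref{eq:full corner Q_S} and~\eqref{eq:full corner B_S}, together with the stability assertions of Remark~\ref{rem:a-adic stability} and the fullness of the projection $e$. Recall the general fact that if $A$ is a $\sigma$-unital $C^*$-algebra and $p \in A$ is a full projection, then $pAp$ is stably isomorphic to $A$; and if moreover $A$ is stable, then $pAp \otimes \K \cong A$, i.e.\ the stabilization of the corner is $A$ itself. Thus for the first statement, I would argue as follows: by \eqref{eq:full corner Q_S} we have $\QQ_S \cong e\,\overline{\QQ}(a_S,H)\,e$; the projection $e$ representing the characteristic function of $\Delta \subset \Omega$ in $C_0(\Omega)$ is full in $\overline{\QQ}(a_S,H) = C_0(\Omega) \rtimes N \rtimes H$, as recorded (citing \cite{Oml}*{Section~11.6}); and by Remark~\ref{rem:a-adic stability} the $a$-adic algebra $\overline{\QQ}(a_S,H)$ is a stable (non-unital UCT Kirchberg) $C^*$-algebra. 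Hence $\QQ_S \otimes \K \cong e\,\overline{\QQ}(a_S,H)\,e \otimes \K \cong \overline{\QQ}(a_S,H)$, which is exactly the first claim.

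For the second statement the argument is formally identical, using \eqref{eq:full corner B_S} in place of \eqref{eq:full corner Q_S}: here $e$ is full already in the smaller algebra $C_0(\Omega) \rtimes N$ (the cited argument in \cite{Oml} does not use non-triviality of $H$), and Remark~\ref{rem:a-adic stability} also shows that $C_0(\Omega) \rtimes N$ is stable — indeed that remark gives the explicit description of $C_0(\Omega)\rtimes N$ as an increasing union of matrix amplifications $M_{a_0 \cdots a_k}(C(\Delta)\rtimes \Z)$ with connecting maps into upper-left corners, from which stability is read off directly. Combining fullness of $e$ with stability of $C_0(\Omega)\rtimes N$ and the identification $\BB_S \cong e\,(C_0(\Omega)\rtimes N)\,e$ gives $\BB_S \otimes \K \cong C_0(\Omega) \rtimes N$.

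The only genuine content that needs care, rather than being a routine invocation, is the Brown-type stable isomorphism theorem in the generality used here: the standard reference (L.\ G.\ Brown) gives $pAp \otimes \K \cong A \otimes \K$ for $A$ $\sigma$-unital and $p$ full, and one must then note that when $A$ is already stable this simplifies to $pAp \otimes \K \cong A$. Both $\overline{\QQ}(a_S,H)$ and $C_0(\Omega)\rtimes N$ are separable (as crossed products of separable algebras by countable groups), hence $\sigma$-unital, so the hypotheses are met. I would expect this citation-level bookkeeping — making sure fullness of $e$, $\sigma$-unitality, and stability are all in place simultaneously — to be the main (and only) obstacle, and it is entirely mild: everything needed has been set up in the preceding remarks and displayed equations. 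One could alternatively phrase the whole proof in one line by saying that \eqref{eq:full corner Q_S}, \eqref{eq:full corner B_S}, fullness of $e$, and Remark~\ref{rem:a-adic stability} together imply the two stable isomorphisms via Brown's theorem, and that is essentially the proof I would write.
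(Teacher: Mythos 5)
Your proposal is correct and is essentially the paper's own argument: the authors derive the proposition by combining the corner identifications \eqref{eq:full corner Q_S} and \eqref{eq:full corner B_S}, fullness of $e$, and the stability assertions of Remark~\ref{rem:a-adic stability}, with Brown's stable isomorphism theorem left implicit. Your write-up merely makes that last citation explicit, which is a fair and accurate reconstruction.
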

Therefore Proposition~\ref{prop:stable Q_S} gives an alternative way to see that $\QQ_S$ is a unital UCT Kirchberg algebra, which is also a consequence of Proposition~\ref{prop:Q_S as O-alg}.

\section{Comparison with real dynamics}\label{sec:comp with real dyn}

Let $S$ and $H_k$ be as specified in Section~\ref{sec:prelim}, $a=(a_k)_{k \in \Z}$ in $\N^\times \setminus\{1\}$, and $H_a$ a non-trivial subgroup of $\Q_+^\times$ that acts on $N_a$ by continuous multiplication. For convenience, we will assume $a^*= a$ so that $N_a^*=N_a$. Moreover, $N_a$ acts by translation and $H_a$ acts by multiplication on $\R$, respectively, giving rise to an $ax+b$-action of $N_a\rtimes H_a$ on $\R$. Let $\widehat{N}_a$ denote the Pontryagin dual of $N_a$. By \cite{KOQ}*{Theorem~3.3}, the diagonal embedding $N_a \to\R\times\Omega_a$ has discrete range, and gives an isomorphism
\[(\R\times\Omega_a)/N_a \cong \widehat{N}_a.\]
By applying Green's symmetric imprimitivity theorem, see e.g.\ \cite{Wil}*{Corollary~4.11}, we obtain that
\[C_0(\Omega_a) \rtimes N_a \sim_M C_0(\R) \rtimes N_a,\]
and this Morita equivalence is equivariant for the actions of $H_a$ by multiplication on one side and inverse multiplication on the other. The inverse map on $H_a$ does not have any impact on the crossed products, and thus
\[\overline{\QQ}(a,H_a) \sim_M C_0(\R) \rtimes N_a \rtimes H_a.\]
All the above is explained in detail in \cite{KOQ}*{Proof of Theorem~4.1}. Moreover, recall that UCT Kirchberg algebras are either unital or stable, so by using Proposition~\ref{prop:stable Q_S} we get:
\begin{thm}\label{thm:duality}
The $a$-adic algebra $\overline{\QQ}(a,H_a)$ is isomorphic to $C_0(\R) \rtimes N_a \rtimes H_a$. In particular, the stabilization of $\QQ_S$ is isomorphic to $C_0(\R) \rtimes N \rtimes H$.
\end{thm}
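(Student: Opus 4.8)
The plan is to chain together the structural facts assembled in Section~\ref{subsec:a-adic algs} with Green's symmetric imprimitivity theorem, and then invoke the Kirchberg--Phillips dichotomy to promote a Morita equivalence to an honest isomorphism. First I would recall, via \eqref{eq:N_a ind lim}, that $N_a$ embeds diagonally into $\R \times \Omega_a$; by \cite{KOQ}*{Theorem~3.3} this range is discrete and cocompact, with quotient $\widehat{N}_a$. The key structural input is then Green's symmetric imprimitivity theorem (in the form of \cite{Wil}*{Corollary~4.11}) applied to the two commuting free and proper actions of $N_a$ on $\R$ and on $\Omega_a$: this yields a Morita equivalence $C_0(\Omega_a) \rtimes N_a \sim_M C_0(\R) \rtimes N_a$. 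The point that needs a little care here is the $H_a$-equivariance: the imprimitivity bimodule carries an action of $H_a$ compatible with multiplication on the $\Omega_a$-side and with \emph{inverse} multiplication on the $\R$-side; since $h \mapsto h^{-1}$ is an (anti)automorphism of the group $H_a$ it induces an isomorphism of the resulting crossed products, so one still obtains $\overline{\QQ}(a,H_a) = C_0(\Omega_a) \rtimes N_a \rtimes H_a \sim_M C_0(\R) \rtimes N_a \rtimes H_a$. All of this is precisely the content of \cite{KOQ}*{Proof of Theorem~4.1}, so I would cite it rather than reprove it.

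The remaining step is to upgrade this Morita equivalence to an isomorphism. By Remark~\ref{rem:a-adic stability}, $\overline{\QQ}(a,H_a)$ is a non-unital UCT Kirchberg algebra, hence stable by Zhang's dichotomy. On the other side, $C_0(\R) \rtimes N_a \rtimes H_a$ is Morita equivalent to a stable $C^*$-algebra and is itself $\sigma$-unital and non-unital; being Morita equivalent to a separable stable algebra, it is stable as well (a stably isomorphic pair of $\sigma$-unital algebras with one stable forces the other to be stable, by the Brown--Green--Rieffel theorem). Two stable $C^*$-algebras that are Morita equivalent are isomorphic, so $\overline{\QQ}(a,H_a) \cong C_0(\R) \rtimes N_a \rtimes H_a$. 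For the ``in particular'' clause, I would specialize to $a = a_S$ and $H_a = H$, for which $N_{a_S} = N$ by construction, and combine with Proposition~\ref{prop:stable Q_S}, which identifies the stabilization of $\QQ_S$ with $\overline{\QQ}(a_S,H)$; this gives the stabilization of $\QQ_S$ isomorphic to $C_0(\R) \rtimes N \rtimes H$.

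I expect the main obstacle to be entirely bookkeeping rather than conceptual: verifying that the $H_a$-action threads through Green's bimodule correctly, and confirming that the inverse on $H_a$ genuinely washes out at the level of crossed products (it does, because a crossed product by a group is unchanged under precomposing the action with a group automorphism). A secondary point worth stating carefully is the stability of $C_0(\R) \rtimes N_a \rtimes H_a$: one should not merely assert it from the Kirchberg dichotomy applied to that algebra directly, since a priori one does not yet know it is simple and purely infinite — it is cleaner to deduce stability from the Morita equivalence with the visibly stable algebra $C_0(\Omega_a) \rtimes N_a$ (stable by the matrix-corner argument in Remark~\ref{rem:a-adic stability}) together with the Brown--Green--Rieffel theorem. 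Once those two points are pinned down, the theorem follows immediately.
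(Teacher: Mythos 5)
Your derivation of the Morita equivalence $\overline{\QQ}(a,H_a) \sim_M C_0(\R)\rtimes N_a \rtimes H_a$ --- the diagonal embedding of $N_a$ into $\R\times\Omega_a$ via \cite{KOQ}*{Theorem~3.3}, Green's symmetric imprimitivity theorem, and the observation that precomposing the $H_a$-action with $h\mapsto h^{-1}$ does not change the crossed product --- is exactly the route the paper takes, and like the paper you may simply cite \cite{KOQ}*{Proof of Theorem~4.1} for all of it.

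The gap is in your upgrade from Morita equivalence to isomorphism. The principle you invoke --- that for a stably isomorphic pair of $\sigma$-unital $C^*$-algebras, stability of one forces stability of the other --- is false: $\C$ and $\K$ are stably isomorphic, $\K$ is stable, and $\C$ is not. So neither your primary argument nor the ``cleaner'' variant via the stable algebra $C_0(\Omega_a)\rtimes N_a$ actually establishes stability of $C_0(\R)\rtimes N_a\rtimes H_a$, which is the one point that genuinely needs an argument. Moreover, the route you explicitly advise against is the correct one, and it is the one the paper uses: being a UCT Kirchberg algebra (separable, nuclear, simple, purely infinite, in the UCT class) is preserved under stable isomorphism, so $C_0(\R)\rtimes N_a\rtimes H_a$, being stably isomorphic to the non-unital UCT Kirchberg algebra $\overline{\QQ}(a,H_a)$ of Remark~\ref{rem:a-adic stability}, is itself a UCT Kirchberg algebra --- your worry that one does not yet know it is simple and purely infinite is unfounded. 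Since it is visibly non-unital, Zhang's dichotomy makes it stable, and Brown--Green--Rieffel then yields the isomorphism of the two stable, separable, Morita equivalent algebras. Your ``in particular'' clause via Proposition~\ref{prop:stable Q_S} is fine as stated.
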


\begin{rem}\label{rem:generality}
It follows from Theorem~\ref{thm:duality}, based on \cite{KOQ}*{Theorem~4.1}, that any $a$-adic algebra $\overline{\QQ}(a,H_a)$ is isomorphic to a crossed product $C_0(\R)\rtimes N_a \rtimes H_a$. Recall that $N_a$ can be any non-cyclic subgroup of $\Q$ and $H_a$ can be any non-trivial subgroup of $\Q_+^\times$ that acts on $N_a$ by multiplication. In the present work, we limit our scope to the case where $N_a$ and $H_a$ can be obtained from a family $S$ of relatively prime numbers for the benefit of a more concise exposition. In a forthcoming project, we aim at establishing analogous results to the ones proven here for all $a$-adic algebras.
\end{rem}

\begin{rem}\label{rem:ind limit C_0(R) cross N}
By employing the description of $N_a$ from \eqref{eq:N_a ind lim}, we can write $C_0(\R) \rtimes N_a$ as an inductive limit. For $k \geq 0$, define the automorphism $\gamma_k$ of $C_{0}(\R)$ by
\[\begin{array}{c} 
\gamma_0(f)(s)=f(s-1) \text{ and }
\gamma_{k+1}(f)(s)=f\bigl(s-\frac{1}{a_0a_1\dotsm a_k}\bigr),\quad f\in C_0(\R). \end{array}\]
Under the identification in \eqref{eq:N_a ind lim}, these automorphisms give rise to the natural $N_a$-action on $C_0(\R)$, where $\gamma_k$ corresponds to the generator for the $k$th copy of $\Z$. For $k \geq 0$, let $u_k \in \M(C_0(\R) \rtimes_{\gamma_k} \Z)$ denote the canonical unitary implementing $\gamma_k$ and consider the $*$-homomorphism $\phi_k\colon C_0(\R) \rtimes_{\gamma_k} \Z \to C_0(\R) \rtimes_{\gamma_{k+1}} \Z$ given by $\phi_k(f) = f$ and $\phi_k(fu_k) = fu_{k+1}^{a_k}$ for every $f\in C_0(\R)$. The inductive limit description \eqref{eq:N_a ind lim} of $N_a$ now yields an isomorphism $\varphi\colon\varinjlim \left\lbrace C_0(\R) \rtimes_{\gamma_k} \Z,\phi_k \right\rbrace \stackrel{\cong}{\longrightarrow} C_0(\R) \rtimes N_a$.
\end{rem}
\begin{rem}\label{rem:duality B_S}
A modification of \cite{CuntzQ}*{Lemma~6.7}, using the inductive limit description from Remark~\ref{rem:ind limit C_0(R) cross N}, shows that $C_0(\R) \rtimes N_a$ is stable. Hence, it follows from the above together with Remark~\ref{rem:a-adic stability} that $C_0(\Omega_a) \rtimes N_a$ is isomorphic to $C_0(\R) \rtimes N_a$. In particular, Proposition~\ref{prop:stable Q_S} shows that the stabilization of $\BB_S$ is isomorphic to $C_0(\R) \rtimes N$.
\end{rem}
We will make use of this fact below.
\begin{lem} \label{lem:torsion algebra Morita}
Let $\widetilde{\alpha}$ and $\beta$ denote the actions of $H_a$ on $C_0(\Omega_a)\rtimes N_a$ and $C_0(\R)\rtimes N_a$, respectively. Then $\beta^{-1}$ is exterior equivalent to an action $\widetilde{\beta}$ for which there is an $\widetilde{\alpha}$ - $\widetilde{\beta}$-equivariant isomorphism $C_0(\Omega_a) \rtimes N_a \stackrel{\cong}{\longrightarrow} C_0(\R) \rtimes N_a$.
\end{lem}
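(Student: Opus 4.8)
The statement asserts that the $H_a$-action $\widetilde\alpha$ on $C_0(\Omega_a)\rtimes N_a$ can be matched, up to exterior equivalence on the target side, with the inverse-multiplication action $\beta^{-1}$ on $C_0(\R)\rtimes N_a$, via an equivariant isomorphism. The plan is to assemble this from three ingredients already in play in the excerpt. First, Green's symmetric imprimitivity theorem applied to the diagonal embedding $N_a\hookrightarrow\R\times\Omega_a$ (as used in the proof of \cite{KOQ}*{Theorem~4.1} and recalled just before Theorem~\ref{thm:duality}) gives a Morita equivalence $C_0(\Omega_a)\rtimes N_a\sim_M C_0(\R)\rtimes N_a$ that is equivariant for $\widetilde\alpha$ on one side and $\beta^{-1}$ on the other. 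Second, both algebras are stable: $C_0(\Omega_a)\rtimes N_a$ by Remark~\ref{rem:a-adic stability}, and $C_0(\R)\rtimes N_a$ by Remark~\ref{rem:duality B_S}. A Morita equivalence between stable, separable $C^*$-algebras is implemented by a genuine isomorphism (Brown--Green--Rieffel), so the underlying algebras are isomorphic; the task is to upgrade this to an \emph{equivariant} isomorphism at the cost of replacing $\beta^{-1}$ by an exterior-equivalent action.

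Concretely, I would proceed as follows. Let $X$ be the $C_0(\Omega_a)\rtimes N_a$ -- $C_0(\R)\rtimes N_a$-imprimitivity bimodule produced by Green's theorem, carrying a compatible $H_a$-action covering $\widetilde\alpha$ and $\beta^{-1}$. Form the linking algebra $L=\begin{pmatrix}C_0(\Omega_a)\rtimes N_a & X\\ \widetilde X & C_0(\R)\rtimes N_a\end{pmatrix}$ with its induced $H_a$-action, whose corner projections $p,q$ are full and complementary. Since both corners are stable and separable, $p$ and $q$ are Murray--von Neumann equivalent in $L$ (indeed $L\cong (C_0(\Omega_a)\rtimes N_a)\otimes\K$ with $p,q$ full projections in a stable algebra, hence equivalent). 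Choosing a partial isometry $w\in L$ with $w^*w=p$, $ww^*=q$ gives an isomorphism $\Phi=w\,\cdot\,w^*\colon C_0(\Omega_a)\rtimes N_a\xrightarrow{\ \cong\ }C_0(\R)\rtimes N_a$. The point is now bookkeeping: conjugating the $H_a$-action on $L$ by the unitary path/family $u_h$ that relates $w$ to its translates $(\beta^{-1})_h(w)$ inside $L$ shows that $\Phi$ intertwines $\widetilde\alpha$ with the action $\widetilde\beta_h:=\mathrm{Ad}(v_h)\circ(\beta^{-1})_h$, where $v_h=\Phi\bigl(\text{(the $q$-corner component of $(\beta^{-1})_h(w)w^*$)}\bigr)$ is a unitary one-cocycle for $\beta^{-1}$ in the multiplier algebra. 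Thus $\widetilde\beta$ is exterior equivalent to $\beta^{-1}$ by construction, and $\Phi$ is the required $\widetilde\alpha$--$\widetilde\beta$-equivariant isomorphism.

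The main obstacle is the equivariance bookkeeping in the last step: one must verify that the cocycle $v_h$ genuinely lies in the multiplier algebra of $C_0(\R)\rtimes N_a$ (not just in $\M(L)$) and that $h\mapsto v_h$ satisfies the cocycle identity $v_{gh}=v_g\,(\beta^{-1})_g(v_h)$ with the appropriate continuity, so that $\widetilde\beta$ is a bona fide action exterior equivalent to $\beta^{-1}$. This is where the stability of the corners is essential — it is what lets us pick $w$ with full source and range projections and keeps everything inside $\M(L)\cong\M\bigl((C_0(\Omega_a)\rtimes N_a)\otimes\K\bigr)$ — and it is the standard mechanism by which an equivariant Morita equivalence of stable algebras is promoted to an equivariant isomorphism after a cocycle perturbation (cf.\ the Combes/Curto--Muhly--Williams picture of equivariant Morita equivalence). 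I expect no genuine difficulty beyond carefully transporting the $H_a$-structure through the linking algebra; the existence of $\Phi$ itself is immediate from Remarks~\ref{rem:a-adic stability} and~\ref{rem:duality B_S} together with the Morita equivalence recalled above.
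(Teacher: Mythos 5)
Your proposal is correct and follows essentially the same route as the paper: the paper likewise combines the $\widetilde{\alpha}$\,-\,$\beta^{-1}$-equivariant Morita equivalence from the proof of \cite{KOQ}*{Theorem~4.1} with separability and stability of both crossed products, and then simply cites Combes (\cite{Com}*{Proposition on p.~16}) to conclude that the actions are outer conjugate. The only difference is that you unpack the standard linking-algebra proof of Combes' proposition (choice of a partial isometry between the full corner projections and the resulting $\beta^{-1}$-cocycle) rather than citing it, which is fine.
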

\begin{proof}
The respective actions $\widetilde{\alpha}$ and $\beta^{-1}$ of $H_a$ are Morita equivalent by \cite{KOQ}*{Proof of Theorem~4.1}. Moreover, both $C^*$-algebras are separable and stable, see Remark~\ref{rem:duality B_S}. Therefore, \cite{Com}*{Proposition on p.~16} implies that the actions are also outer conjugate, and the statement follows.
\end{proof}

In the following, we denote by $\iota_{N_a}\colon C_0(\R) \into C_0(\R) \rtimes N_a$ the canonical embedding, which is equivariant for the respective $H_a$-actions $\beta$ (and also $\beta^{-1}$). We conclude this section by proving that $\iota_{N_a}$ induces an isomorphism between the corresponding $K_1$-groups.

\begin{prop} \label{prop:KtheoryA0}
The canonical embedding $\iota_{N_a}\colon C_0(\R) \into  C_0(\R)\rtimes N_a$ induces an isomorphism between the corresponding $K_1$-groups.
\end{prop}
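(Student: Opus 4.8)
The plan is to use the inductive-limit description of $C_0(\R) \rtimes N_a$ from Remark~\ref{rem:ind limit C_0(R) cross N} together with continuity of $K$-theory under inductive limits. Concretely, I would show that the map on $K_1$ induced by $\iota_{N_a}$ factors as a composition of the inclusion $C_0(\R) \hookrightarrow C_0(\R) \rtimes_{\gamma_0} \Z$ followed by the canonical maps into the inductive limit $\varinjlim\{C_0(\R) \rtimes_{\gamma_k} \Z, \phi_k\}$, and then analyze each piece.

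First I would recall that $K_*(C_0(\R))$ is $(0, \Z)$ in degrees $(0,1)$, with $K_1(C_0(\R)) \cong \Z$ generated by (a suspension of) the standard unitary, or equivalently by the Bott element. Then for each fixed $k$, the algebra $C_0(\R) \rtimes_{\gamma_k} \Z$ is a crossed product of $C_0(\R)$ by $\Z$ acting by a translation, which is homotopic to the trivial action; hence by the Pimsner–Voiculescu exact sequence (or by the Connes–Thom isomorphism, since $\gamma_k$ is an action of $\R$ restricted to $\Z$, but more simply: translation on $\R$ is homotopic to the identity through $C_0(\R)$-automorphisms), the inclusion $\iota_k \colon C_0(\R) \hookrightarrow C_0(\R) \rtimes_{\gamma_k} \Z$ induces an isomorphism on $K_1$ and the boundary map $K_0(C_0(\R)) \to K_0(C_0(\R))$ (which is $\id - (\gamma_k)_* = 0$ on $K_0$, and also $0$ on $K_1$ since translation acts trivially on $K$-theory of $C_0(\R)$) degenerates the six-term sequence into $0 \to K_1(C_0(\R)) \to K_1(C_0(\R)\rtimes_{\gamma_k}\Z) \to K_0(C_0(\R)) \to 0$; but $K_0(C_0(\R)) = 0$, so $\iota_k$ is a $K_1$-isomorphism and $K_1(C_0(\R) \rtimes_{\gamma_k} \Z) \cong \Z$.

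Next I would examine the connecting maps $\phi_k \colon C_0(\R)\rtimes_{\gamma_k}\Z \to C_0(\R)\rtimes_{\gamma_{k+1}}\Z$. Since $\phi_k$ restricts to the identity on the copy of $C_0(\R)$, the square relating $\iota_k$, $\iota_{k+1}$, and $\phi_k$ commutes, so $(\phi_k)_*$ on $K_1$ corresponds under the isomorphisms $\iota_k, \iota_{k+1}$ to the identity on $K_1(C_0(\R))$. Therefore the inductive system $(K_1(C_0(\R)\rtimes_{\gamma_k}\Z), (\phi_k)_*)$ is isomorphic to the constant system $(\Z, \id)$, and its limit is $\Z$, realized compatibly by $(\iota_{N_a})_*$. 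Passing to the limit via continuity of $K_1$ and the isomorphism $\varphi$ of Remark~\ref{rem:ind limit C_0(R) cross N}, and using that $\iota_{N_a}$ is (under $\varphi$) the limit of the $\iota_k$, we conclude $(\iota_{N_a})_* \colon K_1(C_0(\R)) \to K_1(C_0(\R)\rtimes N_a)$ is an isomorphism.

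The main obstacle I anticipate is the bookkeeping needed to verify that the individual isomorphisms $\iota_k$ are genuinely compatible with the connecting maps $\phi_k$ at the level of $K_1$ — i.e.\ that the relevant squares commute on the nose (not merely up to the homotopies used to compute each $K_1(C_0(\R)\rtimes_{\gamma_k}\Z)$). The cleanest way around this is to observe that $\phi_k$ is literally the identity on $C_0(\R)$, so the naturality squares $\iota_{k+1} = \phi_k \circ \iota_k$ hold as genuine equalities of $*$-homomorphisms, and hence commute in $K$-theory without any homotopy argument; the homotopy/Pimsner–Voiculescu input is only needed once, to identify each $K_1(C_0(\R)\rtimes_{\gamma_k}\Z)$ and to check each $\iota_k$ is an isomorphism. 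A secondary point to be careful about is the normalization: one should check that $\varphi \circ (\text{canonical map into the limit}) \circ \iota_k$ really equals $\iota_{N_a}$ for each $k$, which is immediate from the definition of $\varphi$ since it is induced by the identity on each $C_0(\R)$.
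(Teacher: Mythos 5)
Your proposal is correct and follows essentially the same route as the paper's proof: both use the inductive limit description of $C_0(\R)\rtimes N_a$ from Remark~\ref{rem:ind limit C_0(R) cross N}, the Pimsner--Voiculescu sequence together with $K_0(C_0(\R))=0$ and the homotopy of each translation $\gamma_k$ to the identity to see that each $K_1(\iota_k)$ is an isomorphism, the identity $\iota_{k+1}=\phi_k\circ\iota_k$ to transfer this to the connecting maps, and continuity of $K$-theory to pass to the limit. Your extra care about on-the-nose commutativity of the naturality squares is well placed but, as you note, automatic here.
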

\begin{proof}
Recall the isomorphism $\varphi\colon\varinjlim \left\lbrace C_0(\R) \rtimes_{\gamma_k} \Z,\phi_k \right\rbrace \stackrel{\cong}{\longrightarrow} C_0(\R) \rtimes N_a$ from Remark~\ref{rem:ind limit C_0(R) cross N}. For $k \geq 0$, let $\iota_k\colon C_0(\R)\to C_0(\R)\rtimes_{\gamma_k}\Z$ be the canonical embedding. As $\iota_{k+1} = \phi_k \circ \iota_k$, we obtain the following commutative diagram
\begin{equation}\label{eq:ind lim C_0(R)CrossN_a}
\begin{gathered}
\begin{xy}
\xymatrix{
C_0(\R) \ar[r]^{\iota_{N_a}} \ar[d]_{\phi_{k,\infty} \circ \ \iota_k} & C_0(\R) \rtimes N_a \\
 \varinjlim \left\lbrace C_0(\R) \rtimes_{\gamma_m} \Z,\phi_m \right\rbrace \ar[ur]_\varphi
}
\end{xy}
\end{gathered}
\end{equation}
Here, $\phi_{k,\infty}\colon C_0(\R) \rtimes_{\gamma_k} \Z \to \varinjlim \left\lbrace C_0(\R) \rtimes_{\gamma_m} \Z,\phi_m \right\rbrace$ denotes the canonical $*$-homomorphism given by the universal property of the inductive limit.

As $K_0(C_0(\R)) = 0$, the Pimsner-Voiculescu sequence \cite{PV} for $\gamma_k \in \Aut(C_0(\R))$ reduces to an exact sequence
\[
K_0(C_0(\R) \rtimes_{\gamma_k} \Z) \into K_1(C_0(\R)) \stackrel{\id - K_1(\gamma_k)}{\longrightarrow} K_1(C_0(\R)) \stackrel{K_1(\iota_k)}{\onto} K_1(C_0(\R) \rtimes_{\gamma_k} \Z).
\]
For each $k\geq 0$, the automorphism $\gamma_k$ is homotopic to the identity on $\R$, so that $K_1(\gamma_k) = \id$. It thus follows that $K_1(\iota_k)$ is an isomorphism. As $\iota_{k+1} = \phi_k \circ \iota_k$, we therefore get that $K_1(\phi_k)$ is an isomorphism as well. Hence, by continuity of $K$-theory, $K_1(\phi_{k,\infty})$ is an isomorphism. It now follows from \eqref{eq:ind lim C_0(R)CrossN_a} that $K_1(\iota_{N_a})$ is an isomorphism, which completes the proof.
\end{proof}

\section{\texorpdfstring{A decomposition of the $K$-theory of $\QQ_S$}{A decomposition of the K-theory}} \label{sec:K-theory}
In this section, we show that $K_*(\QQ_S)$ decomposes as a direct sum of a free abelian group and a torsion group, see Theorem~\ref{thm:decomposition of K-theory} and Corollary~\ref{cor:torsion and free part K-theory}. We would like to highlight that this is not just an abstract decomposition of $K_*(\QQ_S)$, but a result that facilitates a description of the two parts by distinguished $C^*$-algebras associated to $S$, namely $M_{d^\infty} \rtimes^e_\alpha H^+$ for the torsion part, and $C_0(\R) \rtimes_\beta H$ for the free part. The free abelian part is then shown to have rank $2^{\lvert S\rvert - 1}$, see Proposition \ref{prop:K-theory torsion free part}, so that $\QQ_S$ and $\QQ_T$ can only be isomorphic if $S$ and $T$ have the same cardinality.

The following is the key tool for the proof of this section's main result, and we think it is of interest in its own right.

\begin{prop} \label{prop:splitted K-theory}
Let $k \in \N\cup \left\lbrace \infty \right\rbrace$, $A,B,C$ $C^*$-algebras, and $\alpha\colon \Z^k \curvearrowright A$, $\beta\colon \Z^k \curvearrowright B$, and $\gamma\colon \Z^k \curvearrowright C$ actions. Let $v\colon \Z^k \to \U(\M(C))$ be a $\gamma$-cocycle and denote by $\tilde{\gamma}\colon \Z^k \curvearrowright C$ the induced action given by $\tilde{\gamma}_h = \op{Ad}(v_h) \circ \gamma_h$ for $h \in \Z^k$. Let $\kappa\colon C \rtimes_{\tilde{\gamma}} \Z^k \stackrel{\cong}{\longrightarrow} C \rtimes_{\gamma} \Z^k$ be the $*$-isomorphism induced by the $\gamma$-cocycle $v$. Assume that $\varphi\colon A \to C$ is a non-degenerate $\alpha$ - $\gamma$-equivariant $*$-homomorphism and $\psi\colon B\to C$ a non-degenerate $\beta$ - $\tilde{\gamma}$-equivariant $*$-homomorphism such that $K_0(\varphi)$ and $K_1(\psi)$ are isomorphisms and $K_1(\varphi)$ and $K_0(\psi)$ are trivial. Then
\[
K_*(\varphi \rtimes \Z^k) \oplus K_*(\kappa \circ (\psi \rtimes \Z^k))\colon  K_*(A \rtimes_\alpha \Z^k) \oplus K_*(B \rtimes_\beta \Z^k) \to K_*(C \rtimes_\gamma \Z^k)
\]
is an isomorphism.
\end{prop}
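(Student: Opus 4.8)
The plan is to prove the statement by induction on the number of $\Z$-factors, peeling off one copy of $\Z$ at a time and comparing the resulting Pimsner--Voiculescu exact sequences by a five lemma argument; the case $k=\infty$ is then obtained from the finite cases by continuity of $K$-theory. To dispose of $k=\infty$ first: writing $\Z^\infty=\bigcup_n\Z^n$ along the canonical inclusions, amenability of $\Z^n$ gives $A\rtimes_\alpha\Z^\infty=\varinjlim_n A\rtimes_\alpha\Z^n$, and similarly for $B$ and for $C$ with either $\gamma$ or $\tilde\gamma$, where $\Z^n$ acts by the restricted actions. Since $\kappa$ is the identity on $C$ and sends the canonical unitary $w_h$ for $\tilde\gamma$ to $v_hu_h$ (with $u_h$ the canonical unitary for $\gamma$), its restriction to $C\rtimes_{\tilde\gamma}\Z^n$ is precisely the isomorphism induced by the restricted cocycle $v|_{\Z^n}$. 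Hence $\varphi\rtimes\Z^\infty$ and $\kappa\circ(\psi\rtimes\Z^\infty)$ are the inductive limits of the maps $\varphi\rtimes\Z^n$ and $\kappa_n\circ(\psi\rtimes\Z^n)$, and continuity of $K$-theory reduces everything to finite $k$ (the hypotheses for $\Z^n$ being inherited from those for $\Z^\infty$, as the $K$-theoretic conditions on $\varphi$ and $\psi$ do not refer to the actions).

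For finite $k$ I would argue by induction. The base case $k=0$ asks that $K_*(\varphi)\oplus K_*(\psi)\colon K_*(A)\oplus K_*(B)\to K_*(C)$ be an isomorphism, which is immediate from the hypotheses on $\varphi$ and $\psi$ (note that triviality of $K_1(\varphi)$ and $K_0(\psi)$ forces $K_1(A)=0$ and $K_0(B)=0$, so in degree $i$ the map is $K_i(\varphi)$, respectively $K_i(\psi)$, up to a vanishing summand). For the step, suppose the statement holds for $k-1$, and split $\Z^k=\Z^{k-1}\times\Z t$ into the first $k-1$ coordinates and a last generator $t$. Write $\alpha',\beta',\gamma',\tilde\gamma'$ for the restricted actions and $v'=v|_{\Z^{k-1}}$, with induced isomorphism $\kappa'\colon C\rtimes_{\tilde\gamma'}\Z^{k-1}\to C\rtimes_{\gamma'}\Z^{k-1}$. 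Since $\varphi$ and $\psi$ satisfy the hypotheses with respect to $\alpha',\beta',\gamma',\tilde\gamma',v'$ as well, the inductive hypothesis shows that
\[
\Phi_{k-1}:=K_*(\varphi\rtimes\Z^{k-1})\oplus K_*(\kappa'\circ(\psi\rtimes\Z^{k-1}))\colon K_*(A\rtimes_{\alpha'}\Z^{k-1})\oplus K_*(B\rtimes_{\beta'}\Z^{k-1})\to K_*(C\rtimes_{\gamma'}\Z^{k-1})
\]
is an isomorphism.

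Next I would identify $A\rtimes_\alpha\Z^k\cong(A\rtimes_{\alpha'}\Z^{k-1})\rtimes_{\alpha_t}\Z$, where $\alpha_t$ denotes both the automorphism of $A$ given by $t$ and its (well-defined, as $\Z^k$ is abelian) extension to $A\rtimes_{\alpha'}\Z^{k-1}$; likewise for $B$ and for $C$ with $\gamma$ and with $\tilde\gamma$. Then $\varphi\rtimes\Z^{k-1}$ is $\alpha_t$-$\gamma_t$-equivariant and $\psi\rtimes\Z^{k-1}$ is $\beta_t$-$\tilde\gamma_t$-equivariant, and a direct check on the generating unitaries shows that $\kappa=\kappa''\circ(\kappa'\rtimes\Z)$, where $\kappa'$ intertwines $\tilde\gamma_t$ with $\widehat{\gamma_t}:=\op{Ad}(v_t)\circ\gamma_t$ and $\kappa''\colon(C\rtimes_{\gamma'}\Z^{k-1})\rtimes_{\widehat{\gamma_t}}\Z\to(C\rtimes_{\gamma'}\Z^{k-1})\rtimes_{\gamma_t}\Z$ is the isomorphism induced by the single unitary $v_t$; in particular $\kappa''$ restricts to the identity on $C\rtimes_{\gamma'}\Z^{k-1}$, and $(\widehat{\gamma_t})_*=(\gamma_t)_*$ on $K$-theory since conjugation by a multiplier unitary acts trivially. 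By naturality of the Pimsner--Voiculescu six-term exact sequence with respect to equivariant $*$-homomorphisms and to cocycle isomorphisms, one obtains a morphism from the direct sum of the Pimsner--Voiculescu sequences for $(A\rtimes_{\alpha'}\Z^{k-1},\alpha_t)$ and $(B\rtimes_{\beta'}\Z^{k-1},\beta_t)$ to the Pimsner--Voiculescu sequence for $(C\rtimes_{\gamma'}\Z^{k-1},\gamma_t)$, in which the vertical map on the $(k-1)$-level groups is $\Phi_{k-1}$ and the vertical map on the $k$-level groups is $K_*(\varphi\rtimes\Z^k)\oplus K_*(\kappa\circ(\psi\rtimes\Z^k))$. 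Since $\Phi_{k-1}$ is an isomorphism, the five lemma finishes the induction.

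The part I expect to be the main obstacle is the bookkeeping of the cocycle $v$: one must verify that splitting off the last $\Z$-coordinate decomposes $\kappa$ compatibly as a crossed-product isomorphism at the $\Z^{k-1}$-level (induced by $v'$) followed by a top-level cocycle isomorphism (induced by $v_t$), and that this matches the naturality of the Pimsner--Voiculescu sequences — in particular, that the map induced on the $(k-1)$-level $K$-theory really is $K_*(\kappa'\circ(\psi\rtimes\Z^{k-1}))$, which is what the inductive hypothesis produces, and which works out precisely because $\kappa$ is the identity on $C$. Everything else is routine diagram-chasing.
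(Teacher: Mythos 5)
Your argument is correct, but it takes a genuinely different route from the paper's. The paper sidesteps your induction-with-cocycles by a $2\times 2$ amplification trick: it perturbs $\gamma^{(2)}\colon\Z^k\curvearrowright M_2(C)$ by the cocycle $w_h=\op{diag}(1,v_h)$ to obtain a single action $\delta$ that restricts to $\gamma$ and $\tilde\gamma$ on the two diagonal corners, packages $\varphi$ and $\psi$ into one honestly $\delta$-equivariant map $\eta=\varphi\oplus\psi\colon A\oplus B\to M_2(C)$ with $K_*(\eta)$ an isomorphism, and only then runs the iterated Pimsner--Voiculescu/five-lemma argument (and continuity for $k=\infty$) on $\eta\rtimes\Z^k$. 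Thus no cocycle ever enters the naturality step; $v$ is untangled only at the very end through $M_2(C)\rtimes_\delta\Z^k\cong M_2(C)\rtimes_{\gamma^{(2)}}\Z^k\cong M_2(C\rtimes_\gamma\Z^k)$ and the corner embeddings. You instead carry $v$ through the induction, which is exactly why you must decompose $\kappa=\kappa''\circ(\kappa'\rtimes\Z)$ and invoke the compatibility of the Pimsner--Voiculescu sequence with cocycle-perturbation isomorphisms (identity on the coefficient terms, $K_*(\kappa'')$ on the crossed-product terms). That compatibility is standard, and your verification that $\kappa'$ intertwines $\tilde\gamma_t$ with $\op{Ad}(v_t)\circ\gamma_t$ does go through -- via the cocycle identity $v_t\gamma_t(v_h)=v_{t+h}=v_h\gamma_h(v_t)$, which uses commutativity of $\Z^k$ -- but it is precisely the bookkeeping the paper's amplification is engineered to avoid. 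So the paper's proof buys a cleaner naturality argument at the price of one matrix trick, while yours is more hands-on and makes explicit where abelianness of the acting group is used.

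One caveat on your base case: the triviality of the \emph{map} $K_0(\psi)$ does not force $K_0(B)=0$. However, the proposition can only hold under the reading that $K_1(A)$ and $K_0(B)$ themselves vanish (otherwise $K_0(\varphi)\oplus K_0(\psi)$ already has kernel $0\oplus K_0(B)$ for trivial actions), and the paper's own proof makes the same tacit identification $K_0(\eta)=K_0(\varphi)$; in the intended application both groups do vanish. So this is a shared reading of ``trivial'' rather than a defect specific to your argument, but you should not present it as an implication.
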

\begin{proof}
Consider the amplified action $\gamma^{(2)}\colon  \Z^k \curvearrowright M_2(C)$ given by entrywise application of $\gamma$. Let $w\colon \Z^k \to \U(\M(M_2(C)))$ be the $\gamma^{(2)}$-cocycle given by $w_h= \op{diag}(1,v_h)$ for $h \in \Z^k$. The induced $\Z^k$-action $\delta = \op{Ad}(w) \circ \gamma^{(2)}$ satisfies $\delta_h(\op{diag}(c,c')) = \op{diag}(\gamma_h(c),\tilde{\gamma}_h(c'))$ for all $h \in \Z^k$ and $c,c' \in C$. Thus, $\eta = \varphi \oplus \psi\colon  A \oplus B \to M_2(C)$ is a non-degenerate $\alpha \oplus \beta$ - $\delta$-equivariant $*$-homomorphism.

By additivity of $K$-theory, $K_*(\eta) = K_*(\varphi) + K_*(\psi)$. Hence, $K_*(\eta)$ is an isomorphism, as $K_0(\eta) = K_0(\varphi)$ and $K_1(\eta) = K_1(\psi)$. If $k \in \N$, an iterative use of the naturality of the Pimsner-Voiculescu sequence and the Five Lemma yields that $K_*(\eta \rtimes \Z^k)$ is an isomorphism. If $k = \infty$, it follows from continuity of $K$-theory that $K_*(\eta \rtimes \Z^\infty)$ is an isomorphism, since $K_*(\eta \rtimes \Z^k)$ is an isomorphism for every $k\in \N$.

Let $u \colon \Z^k \to \U(\M((A \oplus B) \rtimes_{\alpha \oplus \beta} \Z^k))$ and $\tilde{u} \colon \Z^k \to \U(\M(M_2(C)\rtimes_\delta \Z^k))$ denote the canonical representations, respectively. The covariant pair given by the natural inclusion  $A \into 1_{\M(A)}((A \oplus B) \rtimes_{\alpha \oplus \beta} \Z^k) 1_{\M(A)}$ and the unitary representation $1_{\M(A)}u_h$, $h \in \Z^k$, gives rise to a $*$-homomorphism $\Phi_A \colon A \rtimes_\alpha \Z^k \to (A \oplus B) \rtimes_{\alpha \oplus \beta} \Z^k$. Similarly, we define $\Phi_B \colon B \rtimes_\beta \Z^k \to (A \oplus B) \rtimes_{\alpha \oplus \beta} \Z^k$. It is easy to check that $\Phi_A$ and $\Phi_B$ are orthogonal and 
\[
\Phi_A \oplus \Phi_B \colon A \rtimes_\alpha \Z^k \oplus B \rtimes_\beta \Z^k \to (A \oplus B) \rtimes_{\alpha \oplus \beta} \Z^k
\]
is an isomorphism. Moreover, let $\tilde{\varphi} \colon A \rtimes_\alpha \Z^k \to M_2(C) \rtimes_\delta \Z^k$ be the $*$-homomorphism induced by the covariant pair in $\M(e_{11}(M_2(C)\rtimes_\delta \Z^k) e_{11})$ given by the composition of the embedding $C \into M_2(C)$ into the upper left corner with $\varphi$ and the unitary representation $e_{11}\tilde{u}_h$, $h \in \Z^k$. Define $\tilde{\psi} \colon B \rtimes_\beta \Z^k \to M_2(C) \rtimes_\delta \Z^k$ analogously by considering the embedding $C \into M_2(C)$ into the lower right corner. By construction, the following diagram commutes
\[
\xymatrix{
A \rtimes_\alpha \Z^k \oplus B \rtimes_\beta \Z^k \ar[drr]_{\tilde{\varphi} \oplus \tilde{\psi}} \ar[rr]_\cong^{\Phi_A \oplus \Phi_B} & & (A \oplus B) \rtimes_{\alpha \oplus \beta} \Z^k \ar[d]^{\eta \rtimes \Z^k} \\
& & M_2(C) \rtimes_\delta \Z^k
}\]
which shows that
\[
K_*(\tilde{\varphi}) \oplus K_*(\tilde{\psi})\colon  K_*(A \rtimes_\alpha \Z^k) \oplus K_*(B \rtimes_\beta \Z^k) \to K_*(M_2(C) \rtimes_{\delta} \Z^k)
\]
is an isomorphism.

Let $\kappa'\colon M_2(C) \rtimes_{\delta} \Z^k \stackrel{\cong}{\longrightarrow} M_2(C) \rtimes_{\gamma^{(2)}} \Z^k$ denote the isomorphism induced by the $\gamma^{(2)}$-cocycle $w$. Then the following diagram commutes and the proof is complete:
\[
\xymatrix{
 A \rtimes_\alpha \Z^k \ar[d]_{\tilde{\varphi}} \ar[r]^{\varphi \rtimes \Z^k} & C \rtimes_\gamma \Z^k  \ar[rd]^{\id_{C \rtimes_\gamma \Z^k} \oplus 0} \\
 M_2(C) \rtimes_\delta \Z^k \ar[r]^{\kappa'}_\cong & M_2(C) \rtimes_{\gamma^{(2)}} \Z^k \ar[r]^\cong & M_2(C \rtimes_\gamma \Z^k) \\
 B \rtimes_\beta \Z^k \ar[u]^{\tilde{\psi}} \ar[r]^*!/_0.5mm/{\labelstyle \psi \rtimes \Z^k} & C \rtimes_{\tilde{\gamma}} \Z^k \ar[r]^\kappa_\cong & C \rtimes_\gamma \Z^k \ar[u]_*!/_1.5mm/{\labelstyle 0 \oplus \id_{C\rtimes_\gamma \Z^k}}
}
\]
\end{proof}

\begin{rem}
Proposition~\ref{prop:splitted K-theory} is true in a more general setting. In fact, $\Z^k$ could be replaced by any locally compact group $G$ with the following property: If $\varphi\colon  A \to B$ is an $\alpha$ - $\beta$-equivariant $*$-homomorphism such that $K_*(\varphi)$ is an isomorphism, then $K_*(\varphi \rtimes G)$ is an isomorphism as well.
\end{rem}

\begin{rem}\label{rem:dilations}
Note that $K_1(M_{d^\infty}) = 0$ and the natural embedding $j\colon M_{d^\infty} \into \BB_S$ induces an isomorphism between the corresponding $K_0$-groups. The invariance of $M_{d^\infty} \subset \BB_S$ under the $H^+$-action $\alpha$, see Remark~\ref{rem:BD subalg B_S}, yields a non-degenerate $*$-homomorphism $j_\infty\colon M_{d^\infty,\infty} \to \BB_{S,\infty}$ between the minimal automorphic dilations for $\alpha$, which is equivariant for the induced $H$-actions $\alpha_\infty$. From the concrete model of the minimal automorphic dilation as an inductive limit, see \cite{Lac}*{Proof of Theorem~2.1}, we conclude that $K_1(M_{d^\infty,\infty}) = 0$ and $K_0(j_\infty)$ is an isomorphism. Moreover, there is an isomorphism between $\BB_S$ and $C(\Delta)\rtimes \Z$ that intertwines the actions of $H^+$, see Remark~\ref{rem:spec of D_S}. It then follows from \eqref{eq:full corner B_S} and \cite{Lac}*{Theorem~2.1} that $\alpha\colon H^+ \curvearrowright C(\Delta)\rtimes\Z$ dilates to $\widetilde{\alpha}\colon H \curvearrowright C_0(\Omega)\rtimes N$, where $\widetilde{\alpha}$ coincides with the $H$-action from Lemma~\ref{lem:torsion algebra Morita}. Consequently, there is an $\alpha_\infty$ - $\widetilde{\alpha}$-equivariant isomorphism $\BB_{S,\infty} \stackrel{\cong}{\longrightarrow} C_0(\Omega)\rtimes N$.
\end{rem}

As in Section~\ref{sec:comp with real dyn}, let $\iota_N\colon C_0(\R) \into C_0(\R)\rtimes N$ denote the canonical embedding. Note that $\iota_N$ is non-degenerate and equivariant with respect to the $H$-actions $\beta$ (and also $\beta^{-1}$).

\begin{thm}\label{thm:decomposition of K-theory}
The map
\[
K_*(j \rtimes^e H^+) \oplus K_*(\iota_N \rtimes H)\colon
K_*(M_{d^\infty} \rtimes^e_\alpha H^+) \oplus K_*(C_0(\R) \rtimes_{\beta} H) \to K_*(\QQ_S),
\]
induced by the identifications $\BB_S \rtimes^e_\alpha H^+ \cong \QQ_S$ from \eqref{eq:B_S and Q_S as crossed products} and $(C_0(\R) \rtimes N) \rtimes_\beta H \cong \QQ_S \otimes \K$ from Theorem~\ref{thm:duality}, is an isomorphism.
\end{thm}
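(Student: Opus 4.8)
The strategy is to deduce the statement from Proposition~\ref{prop:splitted K-theory}, applied with the group $\Z^k = H$ for $k = \lvert S \rvert \in \N \cup \{\infty\}$; recall that $H$ is free abelian on $S$ because the members of $S$ are relatively prime. The key point is that, after passing to dilations, the two maps in the statement become exactly the pair of maps produced by that proposition. The first step is therefore to replace the semigroup crossed products over $H^+$ by genuine group crossed products over $H$. By Laca's dilation theorem \cite{Lac}*{Theorem~2.1}, used as in Remark~\ref{rem:dilations} and \eqref{eq:full corner Q_S}, the inclusion $j\colon M_{d^\infty}\into\BB_S$ dilates to a non-degenerate $\alpha_\infty$-equivariant $*$-homomorphism $j_\infty\colon M_{d^\infty,\infty}\to\BB_{S,\infty}$, and $M_{d^\infty}\rtimes^e_\alpha H^+$, respectively $\QQ_S\cong\BB_S\rtimes^e_\alpha H^+$, embeds as a \emph{full} corner of $M_{d^\infty,\infty}\rtimes_{\alpha_\infty}H$, respectively $\BB_{S,\infty}\rtimes_{\alpha_\infty}H\cong(C_0(\Omega)\rtimes N)\rtimes_{\widetilde\alpha}H$, in such a way that $j_\infty\rtimes H$ restricts to $j\rtimes^e H^+$ on these corners. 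Since the inclusion of a full corner induces an isomorphism on $K$-theory, it suffices to show that $K_*(j_\infty\rtimes H)\oplus K_*(\iota_N\rtimes H)$ is an isomorphism onto $K_*\bigl((C_0(\Omega)\rtimes N)\rtimes_{\widetilde\alpha}H\bigr)$, which Theorem~\ref{thm:duality} together with \eqref{eq:full corner Q_S} identifies with $K_*(\QQ_S\otimes\K)=K_*(\QQ_S)$.

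Next I would feed the data into Proposition~\ref{prop:splitted K-theory}. Take $A=M_{d^\infty,\infty}$ with $\alpha_\infty$, $B=C_0(\R)$ with the $H$-action $\beta^{-1}$, and $C=C_0(\R)\rtimes N$. By Lemma~\ref{lem:torsion algebra Morita} there is an $\widetilde\alpha$ - $\widetilde\beta$-equivariant isomorphism $C_0(\Omega)\rtimes N\stackrel{\cong}{\longrightarrow}C_0(\R)\rtimes N$ with $\widetilde\beta$ exterior equivalent to the canonical action $\beta^{-1}$ on $C$; let $v$ be a $\widetilde\beta$-cocycle with $\op{Ad}(v)\circ\widetilde\beta=\beta^{-1}$, and set $\gamma=\widetilde\beta$, $\tilde\gamma=\beta^{-1}$ in the notation of that proposition. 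Let $\varphi\colon A\to C$ be $j_\infty$ followed by the $\alpha_\infty$ - $\widetilde\alpha$-equivariant isomorphism $\BB_{S,\infty}\cong C_0(\Omega)\rtimes N$ of Remark~\ref{rem:dilations} and the isomorphism just mentioned; it is non-degenerate and $\alpha_\infty$ - $\widetilde\beta$-equivariant, with $K_0(\varphi)$ an isomorphism and $K_1(\varphi)=0$ since $K_0(j_\infty)$ is an isomorphism and $K_1(M_{d^\infty,\infty})=0$ by Remark~\ref{rem:dilations}. Let $\psi=\iota_N\colon C_0(\R)\into C$; it is non-degenerate and equivariant for $\beta^{-1}$ on both sides, with $K_0(\psi)=0$ (as $K_0(C_0(\R))=0$) and $K_1(\psi)=K_1(\iota_N)$ an isomorphism by Proposition~\ref{prop:KtheoryA0}. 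Thus all hypotheses of Proposition~\ref{prop:splitted K-theory} hold, so
\[
K_*(\varphi\rtimes H)\oplus K_*\bigl(\kappa\circ(\psi\rtimes H)\bigr)\colon K_*(M_{d^\infty,\infty}\rtimes_{\alpha_\infty}H)\oplus K_*(C_0(\R)\rtimes_{\beta^{-1}}H)\to K_*\bigl((C_0(\R)\rtimes N)\rtimes_{\widetilde\beta}H\bigr)
\]
is an isomorphism, where $\kappa$ is the isomorphism induced by the cocycle $v$.

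Finally I would unwind the identifications: Laca's full corners identify $K_*(M_{d^\infty,\infty}\rtimes_{\alpha_\infty}H)$ with $K_*(M_{d^\infty}\rtimes^e_\alpha H^+)$ and carry $\varphi\rtimes H$ to $j\rtimes^e H^+$; the flip automorphism $h\mapsto h^{-1}$ of $H$ together with $\kappa$ identifies $K_*(C_0(\R)\rtimes_{\beta^{-1}}H)$ and $K_*\bigl((C_0(\R)\rtimes N)\rtimes_{\widetilde\beta}H\bigr)$ with $K_*(C_0(\R)\rtimes_\beta H)$ and $K_*\bigl((C_0(\R)\rtimes N)\rtimes_\beta H\bigr)=K_*(\QQ_S\otimes\K)=K_*(\QQ_S)$, respectively, and carries $\kappa\circ(\psi\rtimes H)$ to $\iota_N\rtimes H$; and these identifications of $K_*(\QQ_S)$ agree with the ones fixed in the statement. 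Transporting the isomorphism above along these identifications yields the theorem.

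The one genuinely delicate part is exactly this last bookkeeping: checking that Laca's corner embeddings, the dilation isomorphism of Remark~\ref{rem:dilations}, the Morita and exterior equivalences of Lemma~\ref{lem:torsion algebra Morita}, and the harmless passage between $\beta$ and $\beta^{-1}$ all fit together so that the map produced by Proposition~\ref{prop:splitted K-theory} is genuinely, at the level of $K$-theory, the map $K_*(j\rtimes^e H^+)\oplus K_*(\iota_N\rtimes H)$ of the statement. The conceptual content — the splitting of $K_*(\QQ_S)$ into the contributions of $M_{d^\infty}\rtimes^e_\alpha H^+$ and $C_0(\R)\rtimes_\beta H$, and the degree-one isomorphism $K_1(\iota_N)$ — is already contained in Propositions~\ref{prop:splitted K-theory} and~\ref{prop:KtheoryA0} and in Remark~\ref{rem:dilations}, so I do not expect any further obstacle.
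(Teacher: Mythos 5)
Your proposal is correct and follows essentially the same route as the paper's own proof: dilate via Laca's theorem, combine Remark~\ref{rem:dilations} with Lemma~\ref{lem:torsion algebra Morita} and Proposition~\ref{prop:KtheoryA0} to verify the hypotheses of Proposition~\ref{prop:splitted K-theory}, and then transport the resulting isomorphism back through the full-corner and cocycle identifications. The only (harmless) difference is that you take $\gamma=\widetilde\beta$ and $\tilde\gamma=\beta^{-1}$, whereas the paper uses the opposite assignment; your choice in fact matches the literal hypotheses of Proposition~\ref{prop:splitted K-theory} on $K_0(\varphi)$ and $K_1(\psi)$ more closely.
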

\begin{proof}
By combining Remark~\ref{rem:dilations} with Lemma~\ref{lem:torsion algebra Morita}, there exist an $H$-action $\widetilde\beta$ on $C_0(\R) \rtimes N$ that is exterior equivalent to $\beta^{-1}$, and a non-degenerate $\alpha_\infty$ - $\widetilde\beta$-equivariant $*$-homomorphism $\psi\colon M_{d^\infty,\infty}\to C_0(\R) \rtimes N$, namely the one coming from the composition
\[
M_{d^\infty,\infty} \stackrel{j_\infty}{\longrightarrow}
\BB_{S,\infty} \stackrel{\cong}{\longrightarrow}
C_0(\Omega) \rtimes N \stackrel{\cong}{\longrightarrow}
C_0(\R) \rtimes N.
\]
Since $K_1(j_\infty) = 0$ and $K_0(j_\infty)$ is an isomorphism by Remark~\ref{rem:dilations}, the same also holds for $K_1(\psi)$ and $K_0(\psi)$, respectively. Now Proposition~\ref{prop:KtheoryA0} gives that $K_0(\iota_N)$ is trivial and $K_1(\iota_N)$ is an isomorphism. As $\psi$ and $\iota_N$ are non-degenerate,
\begin{multline*}
K_*(\kappa \circ (\psi \rtimes H)) \oplus K_*(\iota_N \rtimes H)\colon \\
K_*(M_{d^\infty,\infty} \rtimes_{\alpha_\infty} H) \oplus K_*(C_0(\R) \rtimes_{\beta^{-1}} H) \to K_*((C_0(\R)\rtimes N)\rtimes_{\beta^{-1}} H)
\end{multline*}
is an isomorphism by Proposition~\ref{prop:splitted K-theory}, where $\kappa\colon(C_0(\R)\rtimes N)\rtimes_{\widetilde\beta} H \stackrel{\cong}{\longrightarrow} (C_0(\R)\rtimes N) \rtimes_{\beta^{-1}} H$ denotes the isomorphism induced by a fixed $\beta^{-1}$-cocycle defining $\widetilde{\beta}$. Since $K_*(j \rtimes^e H^+)$ corresponds to $K_*(j_\infty \rtimes H)$ under the isomorphisms induced by the minimal automorphic dilations, we also get that $K_*(j \rtimes^e H^+)$ corresponds to $K_*(\kappa \circ (\psi \rtimes H))$ under the isomorphism $K_*(\BB_S \rtimes^e_\alpha H^+) \cong K_*((C_0(\R) \rtimes N) \rtimes_{\beta^{-1}} H)$. As $\BB_S \rtimes^e_\alpha H^+ \cong \QQ_S$ by \eqref{eq:B_S and Q_S as crossed products} and $(C_0(\R)\rtimes N)\rtimes_{\beta^{-1}} H \cong (C_0(\R)\rtimes N)\rtimes_\beta H \cong \QQ_S \otimes \mathcal{K}$ by Theorem~\ref{thm:duality}, the conclusion follows.
\end{proof}

We will now show that the two summands appearing in Theorem~\ref{thm:decomposition of K-theory} correspond to the torsion and the free part of $K_*(\QQ_S)$, respectively.

\begin{prop}\label{prop:K-theory torsion free part}
For $i=0,1$, $K_i(C_0(\R) \rtimes_{\beta} H)$ is the free abelian group in $2^{\lvert S \rvert-1}$ generators.
\end{prop}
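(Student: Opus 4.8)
The plan is to compute $K_*(C_0(\R) \rtimes_\beta H)$ directly by iterating the Pimsner--Voiculescu exact sequence over a chain of subgroups of $H$. Recall $H$ is free abelian of rank $n := \lvert S \rvert$, with a distinguished filtration $H_1 \subset H_2 \subset \dots \subset H_n = H$, where $H_k$ is generated by the $k$ smallest elements of $S$, so $H_k/H_{k-1} \cong \Z$. Accordingly I would write $C_0(\R) \rtimes_\beta H$ as an $n$-fold iterated crossed product by $\Z$, peeling off one generator at a time, and induct on $k$ the statement that $K_i\bigl(C_0(\R) \rtimes_\beta H_k\bigr)$ is free abelian of rank $2^{k-1}$ for both $i=0,1$.

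For the base case $k=1$: $H_1 \cong \Z$ acts on $C_0(\R)$ by multiplication by some $p \in S$. Since $K_0(C_0(\R)) = 0$ and $K_1(C_0(\R)) = \Z$, the Pimsner--Voiculescu sequence degenerates, exactly as in the proof of Proposition~\ref{prop:KtheoryA0}: the multiplication-by-$p$ map on $\R$ is homotopic to the identity (scale the factor $p$ continuously down to $1$), so it induces the identity on $K_1(C_0(\R))$, hence $\id - K_1(\beta_p) = 0$. The PV sequence then gives short exact sequences that force $K_0(C_0(\R)\rtimes_\beta H_1) \cong K_1(C_0(\R)) \cong \Z$ and $K_1(C_0(\R)\rtimes_\beta H_1) \cong K_0(C_0(\R)) = 0 \oplus$ (cokernel) — more precisely one gets $K_0 \cong \Z$ and $K_1 \cong \Z$... wait, let me be careful: with $K_0(C_0(\R))=0$ the sequence reads $0 \to K_1(C_0(\R)) \xrightarrow{\id - K_1(\beta_p)} K_1(C_0(\R)) \to K_1(\text{crossed product}) \to 0$ and $0 \to K_0(\text{crossed product}) \to K_1(C_0(\R)) \to \dots$; since the connecting-type map is zero one obtains $K_0(C_0(\R)\rtimes H_1) = 0$ and... this needs the grading bookkeeping of PV done carefully, but the upshot for a $\Z$-action on a $C^*$-algebra with $K_*$ concentrated in odd degree equal to $\Z$ and trivial action is that both $K$-groups of the crossed product are $\Z$, giving rank $2^0 = 1$. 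For the inductive step, suppose $K_*(C_0(\R)\rtimes_\beta H_{k-1})$ is free abelian of rank $2^{k-2}$ in each degree. The generator $a_{S,k-1}$ of $H_k/H_{k-1}$ acts on $C_0(\R)\rtimes_\beta H_{k-1}$ by an automorphism $\sigma$. Apply PV to this $\Z$-action: we get a six-term exact sequence relating $K_*(C_0(\R)\rtimes H_{k-1})$, the map $\id - K_*(\sigma)$, and $K_*(C_0(\R)\rtimes H_k)$. The crucial point is that $K_*(\sigma) = \id$: the automorphism of $\R$ given by multiplication by $a_{S,k-1}$ is homotopic through homeomorphisms to $\id_\R$, this homotopy is compatible with the $H_{k-1}$-action up to the same kind of rescaling, so $\sigma$ is homotopic (as an automorphism, or at least $K$-theoretically trivial) to the identity. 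Hence $\id - K_*(\sigma) = 0$, the PV sequence splits into short exact sequences $0 \to K_i(C_0(\R)\rtimes H_{k-1}) \to K_i(C_0(\R)\rtimes H_k) \to K_{i-1}(C_0(\R)\rtimes H_{k-1}) \to 0$, and since the outer terms are free abelian the sequence splits, yielding $K_i(C_0(\R)\rtimes H_k)$ free abelian of rank $2^{k-2} + 2^{k-2} = 2^{k-1}$. Taking $k = n$ completes the proof; if $H$ is infinitely generated one finishes by continuity of $K$-theory as in Proposition~\ref{prop:splitted K-theory}, though for the present statement $\lvert S\rvert$ being finite is the case of interest and the rank formula $2^{\lvert S\rvert - 1}$ presupposes it.

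The main obstacle I anticipate is justifying cleanly that each generator of $H$ acts trivially on the $K$-theory of the partial crossed product $C_0(\R)\rtimes_\beta H_{k-1}$. On $C_0(\R)$ itself this is transparent (multiplication by a positive scalar on $\R$ is isotopic to the identity), but after crossing by $H_{k-1}$ one must check that the homotopy $t \mapsto (\text{mult. by } t)$, $t$ running from $a_{S,k-1}$ to $1$, can be promoted to a homotopy of automorphisms of the crossed product, or at least that it induces a homotopy of the relevant $KK$-class — the subtlety being that $H_{k-1}$ acts by \emph{fixed} multiplications while we are deforming another multiplication, and these commute, so the deformed maps are still equivariant for a deformed action, and one should package this as a path in $KK^{\Z}$ or invoke that homotopic actions give isomorphic (hence $K$-theoretically identical) crossed products. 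An alternative, perhaps cleaner, route avoiding this altogether: use Theorem~\ref{thm:decomposition of K-theory} is \emph{not} available here since that is what this proposition feeds into, but one could instead compute $K_*(C_0(\R)\rtimes_\beta H)$ via the connection with $C_0(\R)\rtimes N$ and Proposition~\ref{prop:KtheoryA0}, or simply note that $C_0(\R) \rtimes_\beta H$ is, by the same homotopy invariance, $KK$-equivalent to $C_0(\R) \rtimes_{\mathrm{triv}} H \cong C_0(\R) \otimes C^*(H) \cong C_0(\R) \otimes C(\mathbb{T}^n)$, whose $K$-theory is $K_*(C_0(\R)) \otimes K_*(C(\mathbb{T}^n))$ by the Künneth theorem, giving in each degree a free abelian group of total rank $\frac{1}{2}\cdot 2^n = 2^{n-1}$ once one accounts for the degree shift from $C_0(\R)$. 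This last approach reduces everything to the single homotopy-triviality statement plus standard Künneth bookkeeping, and is the route I would present.
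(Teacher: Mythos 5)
Your proposal is correct, and the route you say you would actually present --- deform $\beta$ to the trivial action and compute $K_*(C_0(\R)\otimes C^*(H))$ --- is exactly the paper's proof. The paper makes the homotopy-invariance step precise not by asserting anything about $KK^{\Z}$ or isomorphism of crossed products, but by forming the single action $\gamma$ of $H$ on $C_0([0,1],C_0(\R))$ given fibrewise by $\tilde{\beta}_{h_i,t}(f)(s)=f((th_i^{-1}+1-t)s)$ (these commute for each fixed $t$ because multiplication on $\R$ is commutative), and then showing that both evaluation maps $\op{ev}_0\rtimes H$ and $\op{ev}_1\rtimes H$ are $K_*$-isomorphisms: the kernel of $\op{ev}_0\rtimes H$ is $C_0((0,1],C_0(\R))\rtimes_\gamma H$, which has trivial $K$-theory by iterated Pimsner--Voiculescu (plus continuity of $K$-theory when $H$ has infinite rank), so the six-term sequence does the rest. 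One caution on your phrasing: ``homotopic actions give isomorphic crossed products'' is false in general (irrational rotation algebras already refute it for $\Z$-actions on $C(\TT)$); what is true, and what the cylinder argument delivers, is only that they have isomorphic $K$-theory. Your first, iterated-PV route is also viable and genuinely different in organization: its key step --- that the automorphism of $C_0(\R)\rtimes_\beta H_{k-1}$ induced by the $k$-th generator acts trivially on $K$-theory --- does hold, because the dilations $s\mapsto c_ts$ commute exactly (not just up to deformation) with the fixed $H_{k-1}$-action, hence extend to a point-norm continuous path of automorphisms of the partial crossed product connecting $\sigma$ to the identity. What the paper's route buys is that this equivariance issue is dealt with once, uniformly in all generators, and the rank count is delegated to the K\"unneth computation for $C_0(\R)\otimes C(\TT^k)$ rather than tracked through an induction; your base-case bookkeeping, after the self-correction, lands on the right answer $K_0\cong K_1\cong\Z$.
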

\begin{proof}
The result holds for any non-trivial subgroup $H$ of $\Q^\times_+$, and we prove it in generality, not necessarily requiring $H$ to be generated by $S$. Suppose that $k$ is the (possibly infinite) rank of $H$. Let $\{ h_i : 0 \leq i \leq k \}$ be a minimal generating set for $H$. For $t\in [0,1]$ and $1 \leq i\leq k$, define $\tilde{\beta}_{h_i,t} \in \op{Aut}(C_0(\R))$ by $\tilde{\beta}_{h_i,t}(f)(s) = f((th_i^{-1} + 1 - t)s)$. Note that $\tilde{\beta}_{h_i,t}$ is indeed an automorphism as $h_i > 0$. Since multiplication on $\R$ is commutative, we see that for each $t \in [0,1]$, $\left\lbrace \tilde{\beta}_{h_i,t} \right\rbrace_{1\leq i\leq k}$ defines an $H$-action. Let $\gamma\colon  H \curvearrowright C_0([0,1],C_0(\R))$ be the action given by $\gamma_{h_i}(f)(t) = \tilde{\beta}_{h_i,t}(f(t))$. We have the following short exact sequence of $C^*$-algebras:
\[
C_0((0,1],C_0(\R))\rtimes_\gamma H \into C_0([0,1],C_0(\R))\rtimes_\gamma H \stackrel{\op{ev}_0\rtimes H}{\onto} C_0(\R)\rtimes_{\id} H
\]
The Pimsner-Voiculescu sequence shows that $K_*(C_0((0,1],C_0(\R))\rtimes_\gamma H) = 0$, where we also use  continuity of $K$-theory if $k = \infty$. The six-term exact sequence corresponding to the above extension now yields that $K_*(\op{ev}_0 \rtimes H)$ is an isomorphism. A similar argument shows that $K_*(\op{ev}_1 \rtimes H)$ is an isomorphism. We therefore conclude that for $i=0,1$,
\[K_i(C_0(\R) \rtimes_{\beta} H) \cong K_i(C_0(\R) \rtimes_{\id} H) \cong K_i(C_0(\R) \otimes C^*(H)).\]
This completes the proof as $K_i(C_0(\R) \otimes C^*(H))$ is the free abelian group in $2^{k-1}$ generators.
\end{proof}

\begin{prop} \label{prop:UHF cross prod torsion K-th}
$K_*(M_{d^\infty} \rtimes^e_\alpha H^+)$ is a torsion group, which is finite if $S$ is finite.
\end{prop}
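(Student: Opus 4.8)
The plan is to pass from the semigroup crossed product $M_{d^\infty}\rtimes^e_\alpha H^+$ to an ordinary group crossed product via Laca's dilation theory, and then to run the Pimsner--Voiculescu sequence \cite{PV} one generator at a time. By \cite{Lac} --- exactly as already invoked in Remark~\ref{rem:dilations} and in the proof of Theorem~\ref{thm:decomposition of K-theory} --- the algebra $M_{d^\infty}\rtimes^e_\alpha H^+$ is a full corner of $M_{d^\infty,\infty}\rtimes_{\alpha_\infty}H$, hence $K_*(M_{d^\infty}\rtimes^e_\alpha H^+)\cong K_*(M_{d^\infty,\infty}\rtimes_{\alpha_\infty}H)$, and it suffices to prove that the latter is torsion, and finite when $S$ is finite. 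Write $R:=\Z[\tfrac{1}{p}:p\in\PS]\subset\Q$, so that $K_0(M_{d^\infty})=R$ and $K_1(M_{d^\infty})=0$. Since every $p\in S$ is a product of primes in $\PS$, the corner endomorphism $\alpha_p$ of $M_{d^\infty}$ induces \emph{multiplication by} $\tfrac{1}{p}$ on $K_0(M_{d^\infty})=R$ (one checks $K_0(\alpha_p)[1]=[e_{0+p\Z}]=\tfrac{1}{p}$, and every group endomorphism of $R$ is multiplication by an element of $R$), which is an \emph{automorphism} of $R$ because $\tfrac1p\in R$. Feeding this into the inductive limit model of $M_{d^\infty,\infty}$ as $\varinjlim(M_{d^\infty},\alpha)$ over $H^+$ and using continuity of $K$-theory, I would conclude $K_0(M_{d^\infty,\infty})=R$ and $K_1(M_{d^\infty,\infty})=0$ (the latter also recorded in Remark~\ref{rem:dilations}), with $\alpha_{\infty,h}$ acting on $K_0$ as multiplication by $\tfrac1h$ for $h\in H^+$.

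Assume first $\lvert S\rvert=k<\infty$, enumerate $S=\{p_1,\dots,p_k\}$ so that $H\cong\Z^k$, and build $M_{d^\infty,\infty}\rtimes_{\alpha_\infty}\Z^k$ by adjoining one copy of $\Z$ at a time. The first step is the only delicate one: the automorphism attached to $p_1$ acts on $K_0(M_{d^\infty,\infty})=R$ as multiplication by $\tfrac{1}{p_1}$, hence $\id-K_0(\alpha_{\infty,p_1})$ is multiplication by $\tfrac{p_1-1}{p_1}\in R$. As $R$ is torsion-free this is injective, and as $\tfrac{1}{p_1}$ is a unit in $R$ its cokernel is $R/(p_1-1)R$; writing $p_1-1=\prod_q q^{a_q}$ and recalling that the primes in $\PS$ are units in $R$, one gets $R/(p_1-1)R\cong\Z/m\Z$ with $m$ the largest divisor of $p_1-1$ coprime to $d$, in particular finite. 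So Pimsner--Voiculescu gives that the crossed product by the first generator has finite $K_0$ and vanishing $K_1$. Each of the remaining $k-1$ steps is a crossed product by $\Z$ of a $C^*$-algebra with \emph{finite} $K_0$ and $K_1$; the six-term Pimsner--Voiculescu sequence then exhibits each new $K$-group as an extension of a subgroup of an old $K$-group by a quotient of an old $K$-group, so finiteness is preserved. After $k$ steps $K_*(M_{d^\infty,\infty}\rtimes_{\alpha_\infty}H)$ is finite, settling the finite case.

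For infinite $S$, write $H=\bigcup_{n\ge 1}H_n$ with $H_n\cong\Z^n$ the subgroup generated by the $n$ smallest elements of $S$; then $M_{d^\infty,\infty}\rtimes_{\alpha_\infty}H=\varinjlim_n\bigl(M_{d^\infty,\infty}\rtimes_{\alpha_\infty}H_n\bigr)$, and the computation above shows each $K_*(M_{d^\infty,\infty}\rtimes_{\alpha_\infty}H_n)$ is finite --- here it matters that the first Pimsner--Voiculescu differential involves only the \emph{fixed} integer $p_1-1$, so that $R/(p_1-1)R$ is finite even though $R$ itself is not finitely generated. Continuity of $K$-theory then presents $K_*(M_{d^\infty,\infty}\rtimes_{\alpha_\infty}H)$ as a direct limit of finite abelian groups, hence a torsion group, and the statement follows via the corner identification from the first paragraph.

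I expect the only genuine subtlety to be the identification of $K_0(M_{d^\infty,\infty})$ together with the $\alpha_\infty$-action on it: although $\alpha_p$ is merely a corner (hence non-surjective) endomorphism of $M_{d^\infty}$, it acts \emph{invertibly} on $K_0$ precisely because $p\mid d^\infty$, and this is exactly what makes the inductive limit collapse to $R$ rather than to a more complicated group, and what turns the first Pimsner--Voiculescu differential into multiplication by a unit times $p_1-1$. Everything afterwards is routine bookkeeping with kernels and cokernels of endomorphisms of finite abelian groups.
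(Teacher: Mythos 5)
Your argument is correct and follows essentially the same route as the paper: the key inputs are identical ($K_1(M_{d^\infty})=0$, $K_0(\alpha_p)$ acting as multiplication by $\tfrac1p$ on $K_0(M_{d^\infty})\cong N$, the resulting differential $\tfrac{p-1}{p}$ with finite cokernel $N/(p-1)N$, iteration over the generators, and continuity of $K$-theory for infinite $S$). The only cosmetic difference is that you first dilate via \cite{Lac} and then use the classical Pimsner--Voiculescu sequence for $\Z$-actions, whereas the paper applies the Pimsner--Voiculescu type sequence for semigroup crossed products by $\N$ (\cite{Pas}, \cite{CunPV}) directly to $M_{d^\infty}\rtimes^e_{\alpha_p}\N$.
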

\begin{proof}
As in Remark \ref{rem:BD subalg B_S}, we think of $M_{d^\infty}$ as the inductive limit $(M_p(\C),\iota_{p,pq})_{p,q \in H^+}$ with $\iota_{p,pq}\colon M_p(\C) \to M_{pq}(\C)$ given by $e_{i, j}^{(p)} \otimes 1 \mapsto \sum_{k=0}^{q-1}e_{i+pk, j+pk}^{(pq)} \otimes 1$. With this perspective, $\alpha$ satisfies $\alpha_q(e^{(p)}_{m,m}) = e^{(pq)}_{qm,qm}$ for all $p,q \in H^+$ and $0 \leq m \leq p-1$. From this, one concludes that for $q \in H^+$, $K_0(\alpha_q)$ is given by multiplication with $1/q$ on $K_0(M_{d^\infty}) \cong N$. Hence, for $p \in S$, there exists a Pimsner-Voiculescu type exact sequence, see \cite{Pas}*{Theorem~4.1} and also \cite{CunPV}*{Proof of Proposition~3.1},
\[
\xymatrix{
0 \ar[r] & K_1(M_{d^\infty} \rtimes^e_{\alpha_p} \N) \ar[r] & N \ar[r]^{\frac{p-1}{p}} & N \ar[r] & K_0(M_{d^\infty} \rtimes^e_{\alpha_p} \N) \ar[r] & 0
}
\]
This shows that $K_1(M_{d^\infty} \rtimes^e_{\alpha_p} \N) = 0$ and $K_0(M_{d^\infty} \rtimes^e_{\alpha_p} \N) \cong N / (p-1)N$. In particular, $K_*(M_{d^\infty} \rtimes^e_{\alpha_p} \N)$ is a torsion group. If $S$ is finite, we can write $M_{d^\infty} \rtimes^e_\alpha H^+$ as an $\lvert S\rvert$-fold iterative crossed product by $\N$ and apply the Pimsner-Voiculescu type sequence repeatedly to get that $K_*(M_{d^\infty} \rtimes^e_\alpha H^+)$ is a torsion group. If $S$ is infinite, we may use continuity of $K$-theory to conclude the claim from the case of finite $S$.

Finiteness of $S$ implies finiteness of $K_*(M_{d^\infty} \rtimes^e_\alpha H^+)$ because $N/(p-1)N$ is finite for all $p \in S$, which follows from the forthcoming Lemma~\ref{lem:N/gN}.
\end{proof}

Using Proposition~\ref{prop:K-theory torsion free part} and \ref{prop:UHF cross prod torsion K-th}, we record the following immediate consequence of the decomposition of $K_*(\QQ_S)$ given in Theorem~\ref{thm:decomposition of K-theory}.

\begin{cor}\label{cor:torsion and free part K-theory}
$K_*(\QQ_S)$ decomposes as a direct sum of a free abelian group and a torsion group. More precisely, $K_*(j \rtimes^e H^+)$ is a split-injection onto the torsion subgroup and $K_*(\iota_N \rtimes H)$ is a split-injection onto the torsion free part of $K_*(\QQ_S)$, respectively.
\end{cor}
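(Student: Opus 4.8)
The statement is an immediate consequence of the three preceding results, so the plan is simply to assemble them. By Theorem~\ref{thm:decomposition of K-theory}, the map
\[
K_*(j \rtimes^e H^+) \oplus K_*(\iota_N \rtimes H)\colon K_*(M_{d^\infty} \rtimes^e_\alpha H^+) \oplus K_*(C_0(\R) \rtimes_{\beta} H) \stackrel{\cong}{\longrightarrow} K_*(\QQ_S)
\]
is an isomorphism. The first step is to recall, from Proposition~\ref{prop:UHF cross prod torsion K-th}, that $K_*(M_{d^\infty} \rtimes^e_\alpha H^+)$ is a torsion group, and, from Proposition~\ref{prop:K-theory torsion free part}, that $K_*(C_0(\R) \rtimes_{\beta} H)$ is free abelian (of rank $2^{\lvert S \rvert - 1}$), in particular torsion free. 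This already gives the first assertion, namely that $K_*(\QQ_S)$ is a direct sum of a free abelian group and a torsion group.

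It then remains to invoke the elementary fact that in any decomposition $G = T \oplus F$ of an abelian group with $T$ a torsion group and $F$ torsion free, the summand $T$ is exactly the torsion subgroup $\op{Tor}(G)$, and the composition $F \into G \onto G/\op{Tor}(G)$ is an isomorphism. Applying this to $G = K_*(\QQ_S)$, with $T$ the image of $K_*(j \rtimes^e H^+)$ and $F$ the image of $K_*(\iota_N \rtimes H)$, we conclude that $K_*(j \rtimes^e H^+)$ is injective with image $\op{Tor}(K_*(\QQ_S))$, while $K_*(\iota_N \rtimes H)$ is injective with image a complement of $\op{Tor}(K_*(\QQ_S))$, i.e.\ a section of the quotient map onto the torsion free part. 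Both maps are split injections, since the inclusion of a direct summand always splits, here via the complementary projection coming from the isomorphism of Theorem~\ref{thm:decomposition of K-theory}.

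There is essentially no obstacle: all of the substance has been carried out in Theorem~\ref{thm:decomposition of K-theory} and Propositions~\ref{prop:K-theory torsion free part} and~\ref{prop:UHF cross prod torsion K-th}. The only point that warrants a word of care is the reading of the phrase ``split-injection onto the torsion free part'': as $K_*(\QQ_S)$ carries no canonical internal copy of its torsion free quotient, this is to be understood as the statement that $K_*(\iota_N \rtimes H)$ is a section of the canonical surjection $K_*(\QQ_S) \onto K_*(\QQ_S)/\op{Tor}(K_*(\QQ_S))$, and it is precisely the uniqueness of the torsion subgroup in the above decomposition that makes the identification of the torsion part with the image of $K_*(j \rtimes^e H^+)$ canonical.
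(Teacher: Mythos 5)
Your argument is correct and is exactly the paper's: the corollary is recorded as an immediate consequence of Theorem~\ref{thm:decomposition of K-theory} combined with Propositions~\ref{prop:K-theory torsion free part} and~\ref{prop:UHF cross prod torsion K-th}, together with the elementary observation that in a decomposition of an abelian group into a torsion summand and a torsion free summand, the former is the torsion subgroup. Your additional remark clarifying how to read ``split-injection onto the torsion free part'' is a reasonable gloss and does not change the argument.
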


\section{The torsion subalgebra}\label{sec:torsion part}
Within this section we analyze the structure of $M_{d^\infty} \rtimes^e_\alpha H^+$ and its role relative to $\QQ_S$ more closely. First, we show that the inclusion $M_{d^\infty} \into \BB_S$ is equivariantly sequentially split with respect to the $H^+$-actions $\alpha$ in the sense of \cite{BarSza1}*{Remark~3.17}, see Proposition~\ref{prop:UHF into BD is eq seq split}. According to \cite{BarSza1}, we thus get that $M_{d^\infty} \rtimes^e_\alpha H^+$ shares many structural properties with $\BB_S \rtimes^e_\alpha H^+ \cong \QQ_S$. Most importantly, $M_{d^\infty} \rtimes^e_\alpha H^+$ is a unital UCT Kirchberg algebra, see Corollary~\ref{cor:UHF into BD seq split cr pr}. By simplicity of $M_{d^\infty} \rtimes^e_\alpha H^+$, we conclude that this $C^*$-algebra is in fact isomorphic to the natural subalgebra $\CA_S$ of $\QQ_S$ that is generated by all the isometries $u^ms_p$ with $p \in S$ and $0 \leq m \leq p-1$, see Corollary~\ref{cor:subalgebra for torsion part}. By Corollary \ref{cor:torsion and free part K-theory}, it thus follows that the canonical inclusion $\CA_S \into \QQ_S$ induces a split-injection onto the torsion subgroup of $K_*(\QQ_S)$. Due to this remarkable feature, we call $\CA_S$ the \emph{torsion subalgebra} of $\QQ_S$. 

We then present two additional interesting perspectives on the torsion subalgebra $\CA_S$. Firstly, $\CA_S$ can be described as the boundary quotient of the right LCM subsemigroup $U = \{ (m,p) : p \in H^+, 0 \leq m \leq p-1\}$ of $\N \rtimes H^+$ in the sense of \cite{BRRW}, see Proposition~\ref{prop:A_S as BQ of U}. This yields a commutative diagram which might be of independent interest, see Remark~\ref{rem:A_S as BQ of U}.

Secondly, the boundary quotient perspective allows us to identify $\CA_S$ for $k:= \lvert S \rvert < \infty$ with the $C^*$-algebra of the $k$-graph $\Lambda_{S,\theta}$ consisting of a single vertex with $p$ loops of color $p$ for every $p \in S$, see Corollary~\ref{cor:tor subalgebra via k-graphs}. Quite intriguingly, $\Lambda_{S,\theta}$ differs from the canonical $k$-graph model $\Lambda_{S,\sigma}$ for $\bigotimes_{p \in S} \CO_p$ only with respect to its factorization rules, see Remark~\ref{rem:Lambda_S flip}. In fact, the corresponding $C^*$-algebras coincide for $\lvert S \rvert \leq 2$, see Proposition~\ref{prop:A_S for |S|=2}. After obtaining these intermediate results, we were glad to learn from Aidan Sims that, in view of Conjecture~\ref{conj:k-graph}, it is reasonable to expect that the results for $\lvert S \rvert \leq 2$ already display the general form, i.e.\ that $\CA_S$ is always isomorphic to $\bigotimes_{p \in S} \CO_p$.

\begin{prop}\label{prop:UHF into BD is eq seq split}
The embedding $M_{d^\infty} \into \BB_S$ is $\alpha$-equivariantly sequentially split.
\end{prop}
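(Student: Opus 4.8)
The goal is to produce, for the inclusion $j\colon M_{d^\infty}\hookrightarrow\BB_S$ together with the $H^+$-actions $\alpha$ on both sides, an equivariant approximate left inverse ``up to sequence algebras'' in the sense of \cite{BarSza1}*{Remark~3.17}: that is, an $H^+$-equivariant $*$-homomorphism $\varrho\colon\BB_S\to M_{d^\infty,\infty}=\ell^\infty(\N,M_{d^\infty})/c_0(\N,M_{d^\infty})$ such that $\varrho\circ j$ equals the canonical embedding of $M_{d^\infty}$ into its sequence algebra (with $H^+$ acting diagonally by $\alpha$ on the sequence algebra). So the first thing I would do is recall the concrete picture from Remark~\ref{rem:BD subalg B_S}: $\BB_S$ is the Bunce--Deddens algebra of type $d^\infty$, realized as the inductive limit of $M_p(\C)\otimes C^*(\Z)$ over $p\in H^+$, with $M_{d^\infty}\subset\BB_S$ the canonical UHF subalgebra spanned by the $e_{m+p\Z}u^{m-n}$ with $0\le m,n\le p-1$, and the $H^+$-action satisfying $\alpha_q(e_{m+p\Z}u^{m-n})=e_{qm+pq\Z}u^{qm-qn}$.

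Next I would construct the conditional-expectation-type maps that compress $\BB_S$ back into the finite-dimensional blocks. For each $p\in H^+$ there is a natural expectation $E_p\colon\BB_S\to M_p(\C)\otimes C^*(\Z)\cong M_p(C(\TT))$, and one then wants to restrict the $C(\TT)$-direction: a point-evaluation or, better, an averaging that lands inside $M_{d^\infty}$. Concretely, pick a sequence $p_1\mid p_2\mid\cdots$ cofinal in $H^+$ (e.g.\ $p_n=(\prod_{p\in S,\,p\le n}p)^n$ when $S$ is infinite, or $p_n=q^n$ with $q=\prod_{p\in S}p$ when $S$ is finite), and for each $n$ produce a u.c.p.\ (ideally $*$-homomorphic) map $\BB_S\to M_{d^\infty}$ that agrees with $\mathrm{id}$ on larger and larger finite-dimensional subalgebras of $M_{d^\infty}$ and that asymptotically intertwines the actions. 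The point is that the unitary $u\in\BB_S$, after compressing to the $p_n$-block $M_{p_n}(C(\TT))$, becomes (up to a diagonal unitary in $M_{p_n}$) the cyclic shift times the scalar generator of $C^*(\Z)$; evaluating the $C^*(\Z)$-coordinate suitably lands $u$ in $M_{p_n}(\C)\subset M_{d^\infty}$, and the key estimate is that this choice becomes compatible with $\alpha$ as $n\to\infty$ because $\alpha_q$ multiplies exponents by $q$ and hence shifts the relevant block index. Assembling the sequence $(\varrho_n)_n$ gives $\varrho\colon\BB_S\to M_{d^\infty,\infty}$; one then checks $\varrho\circ j$ is the standard embedding and that $\varrho$ is $H^+$-equivariant for the diagonal $\alpha$-action on the target.

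The step I expect to be the main obstacle is achieving genuine $H^+$-\emph{equivariance} of $\varrho$, not merely approximate intertwining: the naive block-compression maps intertwine $\alpha_q$ only up to an error that must be shown to vanish in the sequence algebra uniformly over the (possibly infinitely many) generators of $H^+$, and one must be careful that the cofinal sequence $(p_n)$ is chosen so that $\alpha_q$ maps the $n$-th finite-dimensional subalgebra into the $(n+\text{const})$-th one for every $q\in S$ simultaneously. A clean way around the direct estimate is to first establish the non-equivariant statement --- that $M_{d^\infty}\hookrightarrow\BB_S$ is sequentially split, which is essentially automatic since both are unital and $K_0(j)$ is an isomorphism with $K_1=0$, or more elementarily by the inductive-limit structure --- and then upgrade it using the functoriality/naturality of the sequence-algebra construction together with the explicit compatibility of the building maps with $\alpha$ recorded in Remark~\ref{rem:BD subalg B_S}; this reduces the equivariance to the single commuting-square identity $\alpha_q\circ\iota_{p,pq}=\iota_{pq,pq^2}\circ\alpha_q$ at the level of blocks, which is a direct computation. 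Once $\varrho$ is in hand, the proposition follows from the definition in \cite{BarSza1}*{Remark~3.17}.
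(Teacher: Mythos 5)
Your construction is the right one and is essentially what the paper does: compress $\BB_S$ blockwise through $M_p(\C)\otimes C^*(\Z)$, evaluate the $C^*(\Z)$-coordinate, and assemble the resulting maps into the sequence algebra of $M_{d^\infty}$ (indexed by $H^+$ rather than $\N$, which changes nothing). What you leave unresolved, and what you misjudge, is the equivariance. If you evaluate each block at the \emph{trivial} character, i.e.\ take $\chi_p\colon M_p(\C)\otimes C^*(\Z)\to M_p(\C)$, $a\otimes u^k\mapsto a$, then these are honest $*$-homomorphisms and they intertwine $\alpha$ \emph{exactly}: since $\alpha_q(e^{(p)}_{m,n}\otimes u^k)=e^{(pq)}_{qm,qn}\otimes u^k$, one gets $\chi_{pq}\circ\alpha_q=\alpha_q\circ\chi_p$ on the nose. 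So there is no asymptotic error to control, no uniformity over the generators of $H^+$ to worry about, and no ``key estimate''; the induced map $\chi\colon\BB_S\to\prod_{h\in H^+}M_{d^\infty}\big/\bigoplus_{h\in H^+}M_{d^\infty}$ is equivariant simply because every square of block maps commutes. (Note that the $\chi_p$ are \emph{not} compatible with the connecting maps $\iota_{p,pq}$, so they do not assemble into a map $\BB_S\to M_{d^\infty}$ --- consistent with $K_1(\BB_S)\neq 0$ --- but a sequence-algebra-valued map needs no such compatibility; only on $M_{d^\infty}$, where $\chi_h\circ\iota_{p,h}=\iota_{p,h}$, does one recover the constant-sequence embedding.)

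Your proposed fallback route has two genuine problems. First, sequential splitness of the inclusion is \emph{not} ``essentially automatic'' from its being a unital embedding with $K_0(j)$ an isomorphism and $K_1=0$: sequential splitness is a structural property much stronger than inducing a $K$-theory isomorphism (it forces, for instance, a long list of regularity properties to pass from $\BB_S$ back to $M_{d^\infty}$), and no such $K$-theoretic criterion exists; the elementary inductive-limit argument is the one that actually works. Second, the commuting square you propose to reduce everything to, $\alpha_q\circ\iota_{p,pq}=\iota_{pq,pq^2}\circ\alpha_q$, expresses compatibility of the action with the connecting maps --- which is what makes $\alpha$ well defined on the inductive limit --- and is not the identity that makes the splitting map equivariant; the relevant identity is the one between the $\chi$'s and $\alpha$ displayed above. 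So the ``upgrade by naturality'' step would not go through as stated: you should instead fix the evaluation point to be the trivial character and verify equivariance of the explicit block maps directly, which is then a one-line computation.
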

\begin{proof}
Let $\iota\colon M_{d^\infty} \into \prod_{p \in H^+}M_{d^\infty} \big / \bigoplus_{p \in H^+} M_{d^\infty}$ denote the canonical inclusion as constant sequences and $\bar{\alpha}$ the induced action of $H^+$ on $\prod_{p \in H^+}M_{d^\infty} \big / \bigoplus_{p \in H^+} M_{d^\infty}$ given by componentwise application of $\alpha_h$ for $h \in H$. Clearly, $\prod_{p \in H^+}M_{d^\infty} \big / \bigoplus_{p \in H^+} M_{d^\infty}$ is canonically isomorphic to the sequence algebra of $M_{d^\infty}$, $\prod_{n \in \N}M_{d^\infty} \big / \bigoplus_{n \in \N} M_{d^\infty}$. In particular, this isomorphism intertwines $\bar{\alpha}$ and the natural $H^+$-action on the sequence algebra induced by $\alpha$. We therefore need to construct a $\alpha$ - $\bar{\alpha}$-equivariant $*$-homomorphism $\chi\colon \mathcal{B}_S \to \prod_{p \in H^+}M_{d^\infty} \big / \bigoplus_{p \in H^+} M_{d^\infty}$ making the following diagram commute: 
\begin{equation}\label{dia:UHF into BD is eq seq split}
\begin{gathered}
\begin{xy}
\xymatrix{
M_{d^\infty} \ar@<-1ex>@{^{(}->}[dr] \ar[rr]^(0.35){\iota}&&\prod\limits_{p \in H^+}M_{d^\infty} \big / \bigoplus\limits_{p \in H^+} M_{d^\infty}\\
&\mathcal{B}_S \ar@<-1ex>@{-->}_(0.35){\chi}[ur]}
\end{xy}
\end{gathered}
\end{equation}
Recall the inductive system $(M_p(\C) \otimes C^*(\Z),\iota_{p,pq})_{p,q \in H^+}$ from Remark~\ref{rem:BD subalg B_S} whose inductive limit is isomorphic to $\BB_S$. The canonical subalgebra $M_p(\C) \subset M_p(\C) \otimes C^*(\Z)$ can in this way be considered as a subalgebra of $M_{d^\infty} \subset \BB_S$ in a natural way. For each $p \in H^+$, the map $\chi_p\colon  M_p(\C) \otimes C^*(\Z) \to M_p(\C)$ given by $\sum_{k=1}^n a_k \otimes u \mapsto \sum_{k=1}^n a_k$ is a $*$-homomorphism. Thus, the family $(\chi_p)_{p \in H^+}$ gives rise to a $*$-homomorphism 
\[
\begin{array}{c} 
\chi'\colon  \prod\limits_{p \in H^+} M_p(\C) \otimes C^*(\Z) \to \prod\limits_{p \in H^+} M_{d^\infty}.\end{array}
\]
Clearly, $\chi'\bigl(\bigoplus_{p \in H^+} M_p(\C) \otimes C^*(\Z)\bigr) \subset \bigoplus_{p \in H^+} M_{d^\infty}$, so $\chi'$ induces a map
\[
\begin{array}{c} \chi\colon  \prod\limits_{p \in H^+} M_p(\C) \otimes C^*(\Z) \bigr/ \bigl(\bigoplus\limits_{p \in H^+} M_p(\C) \otimes C^*(\Z)\bigr) \to \prod\limits_{p \in H^+}M_{d^\infty} \big / \bigoplus\limits_{p \in H^+} M_{d^\infty}. \end{array}
\] 
Using the inductive limit description of $\BB_S$ from Remark~\ref{rem:BD subalg B_S}, we can think of $\BB_S$ as a subalgebra of $\prod_{p \in H^+} M_p(\C) \otimes C^*(\Z) \bigr/ \bigl(\bigoplus_{p \in H^+} M_p(\C) \otimes C^*(\Z)\bigr)$. Moreover, because of the concrete realization of $M_{d^\infty}$ as the inductive limit associated with $(M_p(\C),\iota_{p,pq})_{p,q \in H^+}$, we have that $\chi$ restricts to the canonical embedding $\iota$ on $M_{d^\infty}$. Hence, \eqref{dia:UHF into BD is eq seq split} is commutative, when we ignore the question of equivariance, or, in other words, $\iota$ is sequentially split as an ordinary $*$-homomorphism. However, we claim that we also have a commutative diagram
\begin{equation}\label{dia:UHF into BD equivariance}
\begin{gathered}
\begin{xy}
\xymatrix{
\prod\limits_{p \in H^+}M_{d^\infty} \big / \bigoplus\limits_{p \in H^+} M_{d^\infty} \ar^{\bar{\alpha}_p}[r] & \prod\limits_{p \in H^+}M_{d^\infty} \big / \bigoplus\limits_{p \in H^+} M_{d^\infty} \\
\mathcal{B}_S \ar[u]^(0.4){\chi} \ar_{\alpha_p}[r] & \mathcal{B}_S \ar[u]_(0.4){\chi}
}
\end{xy}
\end{gathered}
\end{equation}
for each $p \in H^+$. Let us expand this diagram for fixed $p$ and arbitrary $q \in H^+$ to:
\begin{equation}\label{dia:UHF into BD equivariance zoom}
\begin{gathered}
\scalebox{0.8}{\begin{xy}
\xymatrix{
\prod\limits_{p \in H^+}M_{d^\infty} \big / \bigoplus\limits_{p \in H^+} M_{d^\infty} \ar^{\bar{\alpha}_p}[rrr] &&& \prod\limits_{p \in H^+}M_{d^\infty} \big / \bigoplus\limits_{p \in H^+} M_{d^\infty} \\
&M_q(\C) \ar@{_{(}->}[ul] \ar^{\alpha_p}[r] & M_{pq}(\C) \ar@{^{(}->}[ur] \\
&M_q(\C) \otimes C^*(\Z) \ar@{^{(}->}[dl] \ar^{\chi_q}[u] \ar_*!/_0.5mm/{\labelstyle \alpha_p}[r] & M_{pq} \otimes C^*(\Z) \ar_{\chi_{pq}}[u] \ar@{_{(}->}[dr]\\
\mathcal{B}_S \ar[uuu]^{\chi} \ar_*!/_0.5mm/{\labelstyle \alpha_p}[rrr] &&& \mathcal{B}_S \ar[uuu]_{\chi}
}
\end{xy}}
\end{gathered}
\end{equation}
It is clear that the four outer chambers are commutative, so we only need to check the centre. For every $0 \leq i,j \leq q-1$, we get
\[\chi_{pq} \circ \alpha_p (e_{i, j}^{(q)} \otimes u) = \chi_{pq}(e_{pi, pj}^{(pq)} \otimes u^p) = e_{pi, pj}^{(pq)} = \alpha_p \circ \chi_q (e_{i, j}^{(q)} \otimes u)\]
and therefore $\chi_{pq} \circ \alpha_p = \alpha_p \circ \chi_q$ on $M_q(\C) \otimes C^*(\Z)$. This establishes the claim as we have $M_{d^\infty} = \varinjlim(M_q(\C),q\in H^+)$ and $\BB_S = \varinjlim(M_q(\C) \otimes C^*(\Z),q\in H^+)$.
\end{proof}

\begin{cor}\label{cor:UHF into BD seq split cr pr}
The inclusion $M_{d^\infty} \rtimes^e_\alpha H^+ \to \BB_S \rtimes^e_\alpha H^+$ is sequentially split. In particular, $M_{d^\infty} \rtimes^e_\alpha H^+$ is a UCT Kirchberg algebra.
\end{cor}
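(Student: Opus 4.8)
The plan is to combine Proposition~\ref{prop:UHF into BD is eq seq split} with two general facts established in \cite{BarSza1}: first, that equivariant sequential splitness is preserved under passing to crossed products, so that an $\alpha$-equivariantly sequentially split $*$-homomorphism $\varphi\colon A\to B$ induces a sequentially split $*$-homomorphism $\varphi\rtimes^e H^+\colon A\rtimes^e_\alpha H^+\to B\rtimes^e_\alpha H^+$; and second, that if a $*$-homomorphism $A\to B$ is sequentially split, then $A$ inherits from $B$ nuclearity, the UCT, pure infiniteness, and simplicity. Applying the first fact to the inclusion $j\colon M_{d^\infty}\into\BB_S$, which is $\alpha$-equivariantly sequentially split by Proposition~\ref{prop:UHF into BD is eq seq split}, gives that $j\rtimes^e H^+\colon M_{d^\infty}\rtimes^e_\alpha H^+\to\BB_S\rtimes^e_\alpha H^+$ is sequentially split. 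Since $\BB_S\rtimes^e_\alpha H^+\cong\QQ_S$ by \eqref{eq:B_S and Q_S as crossed products}, and $\QQ_S$ is a unital UCT Kirchberg algebra (see the remarks following Proposition~\ref{prop:Q_S as O-alg}), the second fact then yields that $M_{d^\infty}\rtimes^e_\alpha H^+$ is nuclear, purely infinite, simple, and satisfies the UCT; as it is plainly separable (because $M_{d^\infty}$ is separable and $H^+$ is countable) and unital (because $M_{d^\infty}$ is unital and $H^+$ is a monoid), it is a unital UCT Kirchberg algebra.

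The point that requires care is whether the crossed-product permanence result of \cite{BarSza1} applies directly to the \emph{semigroup} crossed product $\rtimes^e$ by the Ore monoid $H^+$, rather than only to genuine group actions. If one wishes to reduce to the group case, I would pass to minimal automorphic dilations: by \cite{Lac} there are equivariant isomorphisms $M_{d^\infty}\rtimes^e_\alpha H^+\cong M_{d^\infty,\infty}\rtimes_{\alpha_\infty}H$ and $\BB_S\rtimes^e_\alpha H^+\cong\BB_{S,\infty}\rtimes_{\alpha_\infty}H$ (cf.\ Remark~\ref{rem:dilations}), and the sequence-algebra-valued splitting $\chi$ constructed in the proof of Proposition~\ref{prop:UHF into BD is eq seq split} is built directly from the inductive systems defining $M_{d^\infty}$ and $\BB_S$, hence passes to their dilations; this shows $j_\infty\colon M_{d^\infty,\infty}\to\BB_{S,\infty}$ is $\alpha_\infty$-equivariantly sequentially split, and the group version of the permanence result then applies verbatim. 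The remaining verifications---that the crossed product of the inclusion is the inclusion of the crossed products, and the unitality and separability bookkeeping above---are routine.
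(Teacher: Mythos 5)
Your overall strategy is the same as the paper's: feed Proposition~\ref{prop:UHF into BD is eq seq split} into the permanence properties of sequentially split $*$-homomorphisms (simplicity, nuclearity, pure infiniteness, the UCT) from \cite{BarSza1}, after identifying $\BB_S\rtimes^e_\alpha H^+$ with the unital UCT Kirchberg algebra $\QQ_S$. Where you diverge is the key step of getting sequential splitness of the crossed-product inclusion, and your own worry there is exactly the right one: the crossed-product permanence theorem in \cite{BarSza1} is formulated for group actions, and the paper does \emph{not} invoke it for $\rtimes^e_\alpha H^+$. Instead it reproves the permanence by hand in two lines: since $j$ is unital, the equivariant splitting $\chi\colon\BB_S\to\prod_{p\in H^+}M_{d^\infty}\big/\bigoplus_{p\in H^+}M_{d^\infty}$ induces, by the universal property of the semigroup crossed product, a map $\chi\rtimes^e H^+$ into $\bigl(\prod M_{d^\infty}/\bigoplus M_{d^\infty}\bigr)\rtimes^e_{\bar\alpha}H^+$, and a second application of the universal property produces a canonical $*$-homomorphism $\psi$ from that crossed product into the sequence algebra of $M_{d^\infty}\rtimes^e_\alpha H^+$ with $\psi\circ(\iota\rtimes^e H^+)$ the standard embedding; the composite $\psi\circ(\chi\rtimes^e H^+)$ is the desired splitting. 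Your fallback via minimal automorphic dilations does reach the same conclusion, but it is longer and needs two verifications you pass over quickly: (i) that $\chi$ really induces an $H$-equivariant splitting $\BB_{S,\infty}\to(M_{d^\infty,\infty})_\infty$, which requires composing the stagewise maps $\chi$ with the canonical $*$-homomorphism from $\varinjlim\bigl((M_{d^\infty})_\infty,(\alpha_h)_\infty\bigr)$ into the sequence algebra of the inductive limit $M_{d^\infty,\infty}$ (sequence algebras do not commute with inductive limits, but the map goes in the direction you need); and (ii) that sequential splitness of $M_{d^\infty,\infty}\rtimes_{\alpha_\infty}H\to\BB_{S,\infty}\rtimes_{\alpha_\infty}H$ descends to the full corners that realize the semigroup crossed products via \cite{Lac}, which works because $j$ is unital and the splitting sends the corner projection to the corresponding constant sequence. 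So your argument can be completed, but the paper's direct universal-property construction is shorter and avoids both the citation issue and the dilation bookkeeping.
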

\begin{proof}
By Proposition~\ref{prop:UHF into BD is eq seq split}, we know that $M_{d^\infty} \into \BB_S$ is $\alpha$-equivariantly sequentially split. As this inclusion preserves the units, we can use the universal property of the semigroup crossed products $M_{d^\infty} \rtimes^e_\alpha H^+$ and $\BB_S \rtimes^e_\alpha H^+$ to obtain a commutative diagram of $*$-homomorphisms
\[
\xymatrix{
M_{d^\infty} \rtimes^e_\alpha H^+ \ar[dr] \ar[rr]^(0.4){\iota \rtimes^e H^+} && \left(\prod\limits_{p \in H^+} M_{d^\infty} \big / \bigoplus\limits_{p \in H^+} M_{d^\infty} \right)\rtimes^e_{\bar{\alpha}} H^+\\
&\BB_S \rtimes^e_\alpha H^+ \ar[ur]
}
\]
Again by the universal property of semigroup crossed products, there is a natural $*$-homomorphism
\[\begin{array}{c}
\psi\colon\left(\prod\limits_{p \in H^+} M_{d^\infty} \big / \bigoplus\limits_{p \in H^+} M_{d^\infty} \right)\rtimes^e_{\bar{\alpha}} H^+ \to \prod\limits_{p \in H^+} M_{d^\infty} \rtimes^e_\alpha H^+ \big /\bigoplus\limits_{p \in H^+} M_{d^\infty} \rtimes^e_\alpha H^+
\end{array}\]
such that $\psi \circ (\iota \rtimes^e H^+)$ coincides with the standard embedding. This shows that the inclusion $M_{d^\infty} \rtimes^e_\alpha H^+ \to \BB_S \rtimes^e_\alpha H^+$ is sequentially split. It now follows from \cite{BarSza1}*{Theorem~2.9~(1)+(8)} that $M_{d^\infty} \rtimes^e_\alpha H^+$ is a Kirchberg algebra. Moreover, $M_{d^\infty} \rtimes^e_\alpha H^+$ satisfies the UCT by \cite{BarSza1}*{Theorem~2.10}. We note that this part also follows from standard techniques combined with the central result of \cite{Lac}.
\end{proof}

We will now see that simplicity enables us to identify $M_{d^\infty} \rtimes^e_\alpha H^+$ with the following natural subalgebra of $\QQ_S$, whose name is justified by the next result.

\begin{defn}\label{def:torsion subalgebra}
The \emph{torsion subalgebra} $\CA_S$ of $\QQ_S$ is the $C^*$-subalgebra of $\QQ_S$ generated by $\{u^ms_p : p \in S, 0 \leq m \leq p-1\}$.
\end{defn}  

Note that for $S=\{p\}$, the subalgebra $\CA_S$ is canonically isomorphic to $\CO_p$.

\begin{cor}\label{cor:subalgebra for torsion part}\label{cor:torsion subalgebra justification}
The isomorphism $\BB_S \rtimes^e_\alpha H^+ \stackrel{\cong}{\longrightarrow} \QQ_S$ from \eqref{eq:B_S and Q_S as crossed products} restricts to an isomorphism $M_{d^\infty} \rtimes^e_\alpha H^+ \stackrel{\cong}{\longrightarrow} \CA_S$. In particular, the canonical inclusion $\CA_S \into \QQ_S$ induces a split-injection onto the torsion subgroup of $K_*(\QQ_S)$.
\end{cor}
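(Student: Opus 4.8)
The plan is to realize the asserted isomorphism as the natural $*$-homomorphism $j\rtimes^e H^+\colon M_{d^\infty}\rtimes^e_\alpha H^+\to\BB_S\rtimes^e_\alpha H^+$ induced by the $\alpha$-equivariant unital inclusion $j\colon M_{d^\infty}\into\BB_S$ from Remark~\ref{rem:BD subalg B_S}, to compute its image under the fixed isomorphism $\BB_S\rtimes^e_\alpha H^+\stackrel{\cong}{\longrightarrow}\QQ_S$ of \eqref{eq:B_S and Q_S as crossed products}, and then to invoke simplicity of the source to obtain injectivity.

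First I would unwind the isomorphism $\BB_S\rtimes^e_\alpha H^+\stackrel{\cong}{\longrightarrow}\QQ_S$: it restricts to the identity embedding of $\BB_S$ into $\QQ_S$ and sends the canonical isometry implementing $h\in H^+$ to $s_h$. Hence $\op{im}(j\rtimes^e H^+)$ is the $C^*$-subalgebra of $\QQ_S$ generated by $M_{d^\infty}\subseteq\BB_S$ together with $\{s_p:p\in S\}$. Recall from Remark~\ref{rem:BD subalg B_S} that $M_{d^\infty}$ is generated by the elements $e_{m+p\Z}u^{m-n}$ with $p\in H^+$ and $0\le m,n\le p-1$, and observe, using $e_{m+p\Z}=u^ms_p^{\phantom{*}}s_p^*u^{-m}$, that $e_{m+p\Z}u^{m-n}=(u^ms_p)(u^ns_p)^*$ and $u^ms_p=(e_{m+p\Z}u^m)s_p$. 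The first identity shows that every generator of $M_{d^\infty}$, as well as each $s_p=(u^0s_p)$, lies in $\CA_S$, so $\op{im}(j\rtimes^e H^+)\subseteq\CA_S$; the second shows that every generator $u^ms_p$ of $\CA_S$ with $0\le m\le p-1$ is a product of the generator $e_{m+p\Z}u^m$ of $M_{d^\infty}$ with $s_p$, so $\CA_S\subseteq\op{im}(j\rtimes^e H^+)$. Thus $\op{im}(j\rtimes^e H^+)=\CA_S$.

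It remains to see that $j\rtimes^e H^+$ is injective. By Corollary~\ref{cor:UHF into BD seq split cr pr}, $M_{d^\infty}\rtimes^e_\alpha H^+$ is a Kirchberg algebra, in particular simple; and $j\rtimes^e H^+$ is non-zero, being injective on $M_{d^\infty}$. A non-zero $*$-homomorphism out of a simple $C^*$-algebra is injective, so $j\rtimes^e H^+$ is an isomorphism onto $\CA_S$, which is the first assertion. For the second, Corollary~\ref{cor:torsion and free part K-theory} already records that $K_*(j\rtimes^e H^+)$ is a split-injection onto the torsion subgroup of $K_*(\QQ_S)$; since the canonical inclusion $\CA_S\into\QQ_S$ is, by the above, the composition of the isomorphism $M_{d^\infty}\rtimes^e_\alpha H^+\stackrel{\cong}{\longrightarrow}\CA_S$ with $j\rtimes^e H^+$, the same conclusion holds for it.

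I do not anticipate a serious obstacle: the content is bookkeeping on generators together with the soft fact that a non-zero homomorphism from a simple $C^*$-algebra is injective. The one point deserving care is to keep track that the map being analysed is genuinely the restriction of the chosen isomorphism $\BB_S\rtimes^e_\alpha H^+\stackrel{\cong}{\longrightarrow}\QQ_S$ (so that it is simplicity of the source, and not of $\QQ_S$, that forces injectivity), and that the displayed identities among generators are independent of the coset representative $m$ of $m+p\Z$ — which was verified right after Definition~\ref{def:Q_S}.
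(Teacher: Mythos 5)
Your proposal is correct and follows essentially the same route as the paper: both identify the map as the one induced by the covariant representation of $(M_{d^\infty},\alpha)$ inside $\CA_S$, verify surjectivity onto $\CA_S$ by matching generators, deduce injectivity from the simplicity of $M_{d^\infty}\rtimes^e_\alpha H^+$ established in Corollary~\ref{cor:UHF into BD seq split cr pr}, and obtain the $K$-theoretic claim from Corollary~\ref{cor:torsion and free part K-theory}. The only difference is that you spell out the generator bookkeeping (the identities $e_{m+p\Z}u^{m-n}=(u^ms_p)(u^ns_p)^*$ and $u^ms_p=(e_{m+p\Z}u^m)s_p$) which the paper leaves implicit.
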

\begin{proof}
$\CA_S$ contains the copy of $M_{d^\infty} \subset \mathcal{B}_S$ described in Remark~\ref{rem:BD subalg B_S}. Together with $s_p, p \in S$, which are also contained in $\CA_S$, this  defines a covariant representation of $(M_{d^\infty},\alpha)$ inside $\CA_S$. The resulting $*$-homomorphism $M_{d^\infty} \rtimes^e_\alpha H^+ \to \CA_S$ is surjective. By Corollary~\ref{cor:UHF into BD seq split cr pr}, $M_{d^\infty} \rtimes^e_\alpha H^+$ is simple, so this map is an isomorphism. The second claim is due to Corollary~\ref{cor:torsion and free part K-theory}.
\end{proof}

Let us continue with the representation of $\CA_S$ as a boundary quotient. When $\lvert S\rvert=k<\infty$, this will lead us to a $k$-graph model for $\CA_S$ that is closely related to the canonical $k$-graph representation for $\bigotimes_{p \in S}\CO_p$, see Remark~\ref{rem:Lambda_S flip}. Consider the subsemigroup $U:= \{ (m,h) \in \N \rtimes H^+ : 0 \leq m \leq h-1\}$ of $\N \rtimes H^+$. Observe that $U$ is a right LCM semigroup because 
\begin{equation}\label{eq:right LCM subsemigroup}
(m,h)U \cap (m',h')U = \bigl((m,h)(\N \rtimes H^+) \cap (m',h')(\N \rtimes H^+)\bigr) \cap U 
\end{equation}
for all $(m,h),(m',h') \in U$, and $\N \rtimes H^+$ is right LCM. We note that $U$ can be used to describe $\N \rtimes H^+$ as a Zappa-Sz\'{e}p product $U \bowtie \N$, where action and restriction are given in terms of the generator $1 \in \N$ and $(m,h) \in U$ by 
\[1.(m,h) = \begin{cases} (m+1,h) &\text{if } m<h-1, \\ (0,h) &\text{if } m=h-1,\end{cases} \quad \quad 1\rvert_{(m,h)} = \begin{cases} 0 &\text{if } m<h-1, \text{ and}\\ 1 &\text{if } m=h-1.\end{cases}\]
In the case of $H^+=\N^\times$ this has been discussed in detail in \cite{BRRW}*{Subsection~3.2} and the very same arguments apply for the cases we consider here. 

\begin{prop}\label{prop:A_S as BQ of U}
$\CA_S$ is canonically isomorphic to the boundary quotient $\QQ(U)$.
\end{prop}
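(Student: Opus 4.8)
The plan is to produce an explicit isomorphism by comparing generators and relations on both sides. On the one hand, $\QQ(U)$ is the universal $C^*$-algebra generated by a representation $\{w_{(m,h)} : (m,h)\in U\}$ of $U$ by isometries satisfying the relations defining the semigroup $C^*$-algebra $C^*(U)$ (which, since $U$ is right LCM, amount to the standard Nica-type covariance relations on the semilattice of constructible right ideals) together with the boundary relation $\prod_{f\in F}(1-e_{fU})=0$ for all foundation sets $F$. On the other hand, $\CA_S$ is generated inside $\QQ_S$ by the isometries $u^ms_p$ with $p\in S$, $0\le m\le p-1$. The natural candidate map sends the generating isometry $w_{(m,h)}$ to $u^ms_h \in \QQ_S$ for $(m,h)\in U$. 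First I would verify that these elements $u^ms_h$ satisfy the defining relations of $C^*(U)$: they are isometries, and using $s_pu=u^ps_p$ together with Definition~\ref{def:Q_S}~(i),(iii) one checks that $(u^ms_h)(u^{m'}s_{h'}) = u^{m+hm'}s_{hh'}$ corresponds to the semigroup multiplication in $U$, and that the range projections $e_{(m,h)U}=u^ms_h s_h^*u^{-m}$ intersect exactly as the principal right ideals of $U$ do — this is precisely \eqref{eq:right LCM subsemigroup}, since the range projections in $\QQ_S$ behave like those of $\N\rtimes H^+$ by \eqref{eq:B_S and Q_S as crossed products} and the identification of $\DD_S$ with cosets. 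This gives a $*$-homomorphism $C^*(U)\to\QQ_S$ with image $\CA_S$.

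Next I would check that this map factors through $\QQ(U)$, i.e.\ that the images satisfy the boundary relation. By the reduction in the proof of Proposition~\ref{prop:Q_S as BQ} it suffices to consider accurate foundation sets $F\subset U$; for such $F$, accuracy means the ideals $fU$, $f\in F$, are pairwise disjoint, so $\prod_{f\in F}(1-e_{fU}) = 1-\sum_{f\in F}e_{fU}$, and one must show $\sum_{f\in F}e_{fU}=1$ in $\QQ_S$. The key point is that an accurate foundation set for $U$, after translating via $U\subset\N\rtimes H^+$ and using the Zappa–Szép description $\N\rtimes H^+ = U\bowtie\N$, yields a partition of $\Z$ into cosets $m+h\Z$, and the corresponding sum of projections $e_{m+h\Z}=u^ms_hs_h^*u^{-m}$ equals $1$ by iterating relation (iii); this uses that $\F$ is exactly the set of cosets realized in $\DD_S$, as recorded in Remark~\ref{rem:Q_S basic II}. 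This produces a surjective $*$-homomorphism $\Psi\colon\QQ(U)\onto\CA_S$.

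Finally I would argue injectivity of $\Psi$. The cleanest route is to invoke simplicity: by Corollary~\ref{cor:subalgebra for torsion part}, $\CA_S\cong M_{d^\infty}\rtimes^e_\alpha H^+$ is a (simple) Kirchberg algebra, so any nonzero quotient map onto it from a $C^*$-algebra containing a copy of $\CA_S$ would have to be injective once we know $\QQ(U)$ is not "too big"; more precisely, I would exhibit a $*$-homomorphism in the reverse direction, or show directly that $\QQ(U)$ is simple, so that $\Psi$ — being nonzero — is automatically injective. The cleanest argument is to show $\QQ(U)$ carries a faithful conditional expectation onto a commutative subalgebra whose spectrum is the boundary of $U$, identify that boundary with $\Delta$ (equivalently $\prod_{p\in\PS}\Z_p$), and recognize the resulting groupoid model as the one computing $\CA_S$; alternatively, one can cite the general machinery of \cite{BRRW} identifying $\QQ(U)$ with the $C^*$-algebra of a minimal, purely infinite, amenable groupoid, hence simple, so $\Psi$ is an isomorphism.

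The main obstacle I anticipate is the injectivity/simplicity step: verifying the relations and surjectivity is essentially bookkeeping with \eqref{eq:right LCM subsemigroup} and relation (iii), but controlling the size of $\QQ(U)$ requires either a groupoid model for the boundary quotient of $U$ (tracking how the Zappa–Szép structure $\N\rtimes H^+=U\bowtie\N$ interacts with the boundary) or a uniqueness theorem for $\QQ(U)$ in the spirit of \cite{BRRW}. Since $U$ is right LCM and its boundary can be identified with $\Delta$, I expect this to go through, but it is where the real work lies.
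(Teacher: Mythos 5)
Your first two steps (the map $C^*(U)\to\QQ_S$, $w_{(m,h)}\mapsto u^ms_h$, and the verification that it kills the boundary relations after refining to accurate foundation sets of the form $\{(m,h):0\le m\le h-1\}$) are fine and essentially match computations that appear in the paper. The genuine gap is exactly where you flagged it: injectivity of your surjection $\Psi\colon\QQ(U)\onto\CA_S$ requires simplicity of $\QQ(U)$, and neither of your proposed routes closes this. There is no off-the-shelf simplicity theorem in \cite{BRRW} for the boundary quotient of this particular $U$, and building a groupoid model with minimality and amenability verified would be a substantial detour that the paper does not take.

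The paper's proof sidesteps the problem by building the map in the \emph{opposite} direction, from the algebra already known to be simple into $\QQ(U)$. Concretely: inside $\QQ(U)$ the elements $v^{\phantom{*}}_{(m,h)}v_{(n,h)}^*$, $0\le m,n\le h-1$, form matrix units (using that $(m,h)U\cap(n,h)U=\emptyset$ for $m\ne n$ and that $\{(m,h):0\le m\le h-1\}$ is an accurate foundation set, so $\sum_m e_{(m,h)U}=1$), and these matrix units are compatible as $h$ ranges over $H^+$, producing a copy of $M_{d^\infty}$ in $\QQ(U)$. Together with the isometries $v_{(0,p)}$ this is a covariant representation of $(M_{d^\infty},H^+,\alpha)$, so universality of the semigroup crossed product gives $\varphi\colon\CA_S\cong M_{d^\infty}\rtimes^e_\alpha H^+\to\QQ(U)$, $u^ms_h\mapsto v_{(m,h)}$. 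This $\varphi$ is surjective because the $v_{(m,h)}$ generate $\QQ(U)$, and injective because the domain is simple by Corollary~\ref{cor:UHF into BD seq split cr pr} — which you have already proved and even cite. You mention the reverse map as an aside; it is not an alternative but the whole argument, and it makes the simplicity of $\QQ(U)$ a consequence of the proposition rather than an input to it.
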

\begin{proof}
Recall that $\QQ(U)$ is the quotient of the full semigroup $C^*$-algebra $C^*(U)$ by relation \eqref{eq:BQ}. In particular, it is generated as a $C^*$-algebra by a representation $v$ of $U$ by isometries whose range projections are denoted $v^{\phantom{*}}_{(m,h)}v_{(m,h)}^*= e^{\phantom{*}}_{(m,h)U}$ for $(m,h) \in U$.

For every $h \in H^+$, we get a family of matrix units $(v^{\phantom{*}}_{(m,h)}v_{(n,h)}^*)_{0 \leq m,n \leq h-1}$ because
\[\begin{array}{l}
v_{(n,h)}^*v^{\phantom{*}}_{(m,h)} =  v_{(n,h)}^*e^{\phantom{*}}_{(n,h)U \cap (m,h)U}v_{(m,h)} = \delta_{m,n} \quad
\text{and} \quad \sum\limits_{m=0}^{h-1} e_{(m,h)U} = 1
\end{array}\] 
as $\{(m,h) : 0 \leq m \leq h-1\}$ is an accurate foundation set for $U$. That is to say that, for each $u \in U$, there is $0 \leq m \leq h-1$ such that $uU \cap (m,h)U \neq \emptyset$, and $(m,h)U \cap (n,h)U = \emptyset$ unless $m=n$, see \cite{bsBQforADS} for further details. Since 
\[\begin{array}{lcl}
v^{\phantom{*}}_{(m,h)}v_{(n,h)}^* &=& v^{\phantom{*}}_{(m,h)} \bigl(\sum\limits_{k=0}^{h'-1}e^{\phantom{*}}_{(k,h')U}\bigr)v_{(n,h)}^*\\
&=& \sum\limits_{k=0}^{h'-1}v^{\phantom{*}}_{(m+hk,hh')}v_{(n+hk,hh')}^*
\end{array}\]
for each $h' \in H^+$, we see that $C^*(\{v^{\phantom{*}}_{(m,h)}v_{(n,h)}^* : h \in H^+, 0 \leq m,n \leq h-1\}) \subset \QQ(U)$ is isomorphic to $M_{d^\infty}$. In fact, we get a covariant representation for $(M_{d^\infty},H^+,\alpha)$ as 
\[v^{\phantom{*}}_{(0,p)}v^{\phantom{*}}_{(m,h)}v_{(n,h)}^*v_{(0,p)}^* = v^{\phantom{*}}_{(pm,ph)}v_{(pn,ph)}^*.\] 
Thus we get a $*$-homomorphism $\varphi\colon  \CA_S \cong M_{d^\infty} \rtimes^e_\alpha H^+ \to \QQ(U)$ given by $u^ms_h \mapsto v_{(m,h)}$, see Corollary~\ref{cor:subalgebra for torsion part}. The map is surjective, and due to Corollary~\ref{cor:UHF into BD seq split cr pr}, the domain is simple so that $\varphi$ is an isomorphism.
\end{proof}

\begin{rem}\label{rem:A_S as BQ of U}
Conceptually, it seems that there is more to Proposition~\ref{prop:A_S as BQ of U} than the proof entails: There is a commutative diagram 
\begin{equation}\label{eq:right LCM inclusion diagram}
\begin{gathered}
\xymatrix{
C^*(U) \ar^(0.4){\iota}@{^{(}->}[r] \ar@{->>}_{\pi_U}[d] & C^*(\N \rtimes H^+) \ar@{->>}^{\pi_{\N \rtimes H^+}}[d] \\
\QQ(U) \ar@{^{(}->}_(0.4){\varphi^{-1}}[r] & \QQ(\N \rtimes H^+)
}
\end{gathered}
\end{equation}
with $\iota$ induced by $U \subset \N \rtimes H^+$ and $\varphi$ as in the proof of Proposition~\ref{prop:A_S as BQ of U}. The fact that $\iota$ is an injective $*$-homomorphism follows from \cite{BLS2}*{Proposition~3.6}: $N \rtimes H$ is amenable and hence $C^*(\N \rtimes H^+) \cong C^*_r(\N \rtimes H^+)$, see \cite{BLS1}*{Example~6.3}, and similarly $C^*(U) \cong C^*_r(U)$. Note that the bottom row of \eqref{eq:right LCM inclusion diagram} is given by $\CA_S \into \QQ_S$, see Proposition~\ref{prop:Q_S as BQ} and Proposition~\ref{prop:A_S as BQ of U}.
\end{rem}

By Corollary~\ref{cor:torsion subalgebra justification} and Proposition~\ref{prop:A_S as BQ of U}, the torsion part of the $K$-theory of the boundary quotient of $\N \rtimes H^+$ arises from the boundary quotient of the distinguished submonoid $U$, which in fact sits inside $\QQ(\N \rtimes H^+)$ in the natural way. 

For the remainder of this section, we will assume that $S$ is finite with cardinality $k$. This restriction is necessary in order to derive a $k$-graph model for $\CA_S$, which we obtain via the boundary quotient representation of $\CA_S$. Note that for $p,q \in \N^\times$ and $(m,n) \in \{0,\dots,p-1\} \times \{0,\dots,q-1\}$, there is a unique pair $(n',m') \in \{0,\dots,q-1\} \times \{0,\dots,p-1\}$ such that $m+pn = n'+qm'$. In other words, the map 
\[\theta_{p,q}\colon  \{0,\dots,p-1\} \times \{0,\dots,q-1\} \to \{0,\dots,q-1\} \times \{0,\dots,p-1\}\] 
with $ (m,n) \mapsto (n',m')$ determined by $n'+qm' = m+pn$ is bijective. 

\begin{rem}\label{rem:k-graph from S}
For each $p \in S$, we can consider the $1$-graph given by a single vertex with $p$ loops $(m,p), 0 \leq m \leq p-1$. If we think of the collection of these $1$-graphs as the skeleton of a $k$-graph, i.e.\ the set of all edges of length at most $1$, where the vertices for different $p$ are identified, then the maps $\theta_{p,q}$ satisfy condition~(2.8) in \cite{FS}*{Remark~2.3}, and hence define a row-finite $k$-graph $\Lambda_{S,\theta}$. Indeed, this is obvious for $k=2$. For $k \geq 3$, let $p,q,r \in S$ be pairwise distinct elements and fix $0 \leq m_t \leq t-1$ for $t = p,q,r$. We compute
\[\begin{array}{lclcl}
m_p+p(m_q+qm_r) &=& m_p+p(m^{(1)}_r+rm^{(1)}_q) &=& m^{(2)}_r+r(m^{(1)}_p+pm^{(1)}_q)\\
&=& m^{(2)}_r+r(m^{(2)}_q+qm^{(2)}_p) &=& m^{(3)}_q+q(m^{(3)}_r+rm^{(2)}_p)\\
&=& m^{(3)}_q+q(m^{(3)}_p+pm^{(4)}_r) &=& m^{(4)}_p+p(m^{(4)}_q+qm^{(4)}_r),
\end{array}\]
where $0 \leq m^{(i)}_t \leq t-1$ for $t=p,q,r$ and $i=1,\ldots,4$ are uniquely determined by the $\theta_{s,t}$ for the respective values of $s$ and $t$. The bijection from (2.8) in \cite{FS}*{Remark~2.3} now maps $((m_p,p),(m_q,q),(m_r,r))$ to $((m^{(4)}_p,p),(m^{(4)}_q,q),(m^{(4)}_r,r))$. It is easy to check that $m^{(4)}_t = m_t$ for $t=p,q,r$, which shows that condition~(2.8) in \cite{FS}*{Remark~2.3} is valid. Applying \cite{KP}*{Definition 1.5} to the case of $\Lambda_{S,\theta}$, we see that $C^*(\Lambda_{S,\theta})$ is the universal $C^*$-algebra generated by isometries $(t_{(m,p)})_{p \in S, 0 \leq m \leq p-1}$ subject to the relations:
\[\begin{array}{c} \textnormal{(i)} \ t_{(m,p)}t_{(n,q)} = t_{(n',q)}t_{(m',p)} \text{ if } m+pn = n'+qm' \quad \text{and} \quad\textnormal{(ii)} \ \sum\limits_{m=0}^{p-1} t^{\phantom{*}}_{(m,p)}t_{(m,p)}^* = 1 \end{array}\]
for all $p,q \in S$.
\end{rem}

\begin{cor}\label{cor:tor subalgebra via k-graphs}
$\CA_S$ is isomorphic to $C^*(\Lambda_{S,\theta})$.
\end{cor}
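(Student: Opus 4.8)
The plan is to identify the two universal $C^*$-algebras by matching generators and relations. On one side we have $\CA_S \cong M_{d^\infty} \rtimes^e_\alpha H^+ \cong \QQ(U)$ from Corollary~\ref{cor:subalgebra for torsion part} and Proposition~\ref{prop:A_S as BQ of U}, with the canonical generating isometries $v_{(m,p)}$ for $p \in S$ and $0 \leq m \leq p-1$; on the other side, $C^*(\Lambda_{S,\theta})$ is universal for isometries $t_{(m,p)}$ satisfying exactly the two families of relations (i) and (ii) written down at the end of Remark~\ref{rem:k-graph from S}. The strategy is therefore to show that the $v_{(m,p)}$ inside $\QQ(U)$ satisfy the defining relations of $C^*(\Lambda_{S,\theta})$, obtain a surjective $*$-homomorphism $C^*(\Lambda_{S,\theta}) \to \QQ(U) \cong \CA_S$ by universality, and then invoke simplicity of $\CA_S$ (Corollary~\ref{cor:UHF into BD seq split cr pr}) to conclude that it is an isomorphism --- provided we know $C^*(\Lambda_{S,\theta}) \neq 0$, which holds since the standard $k$-graph $C^*$-algebra construction always admits a nonzero (e.g.\ Cuntz--Krieger) representation.

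First I would record why the $v_{(m,p)}$ satisfy relation (ii): this is precisely the identity $\sum_{m=0}^{p-1} e_{(m,p)U} = 1$, which was already established in the proof of Proposition~\ref{prop:A_S as BQ of U} using that $\{(m,p) : 0 \leq m \leq p-1\}$ is an accurate foundation set for $U$. Next, for relation (i), fix distinct $p, q \in S$ and $(m,n) \in \{0,\dots,p-1\}\times\{0,\dots,q-1\}$, and let $(n',m') = \theta_{p,q}(m,n)$, so that $m + pn = n' + qm'$. In $\N \rtimes H^+$ one has the semigroup identity $(m,p)(n,q) = (m+pn,\,pq) = (n'+qm',\,qp) = (n',q)(m',p)$, and all four elements $(m,p),(n,q),(n',q),(m',p)$ lie in $U$; since $v$ is a semigroup homomorphism on $U$ (the generating map of $C^*(U)$, preserved by the quotient to $\QQ(U)$), we get $v_{(m,p)}v_{(n,q)} = v_{(m+pn,pq)} = v_{(n',q)}v_{(m',p)}$, which is exactly (i). Thus by the universal property of $C^*(\Lambda_{S,\theta})$ there is a $*$-homomorphism $\Psi\colon C^*(\Lambda_{S,\theta}) \to \QQ(U)$ with $t_{(m,p)} \mapsto v_{(m,p)}$.

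It remains to see that $\Psi$ is surjective and injective. Surjectivity: $\QQ(U)$ is generated by the $v_u$ for $u \in U$, and every $v_{(m,h)}$ with $h = p_1\cdots p_\ell \in H^+$ factors, by iterating the semigroup identity above, as a product of terms $v_{(m_i,p_i)}$ with $p_i \in S$ and $0 \leq m_i \leq p_i - 1$ (this is essentially the content of the factorization property in the Zappa--Sz\'ep description of $\N \rtimes H^+$ recalled before Proposition~\ref{prop:A_S as BQ of U}), so the image of $\Psi$ contains all the generators of $\QQ(U)$. Injectivity: $\QQ(U) \cong \CA_S$ is simple by Corollary~\ref{cor:UHF into BD seq split cr pr}, so the kernel of $\Psi$ is either $0$ or all of $C^*(\Lambda_{S,\theta})$; since $\Psi$ is surjective onto the nonzero algebra $\QQ(U)$, the kernel is $0$. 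Composing with the isomorphism $\QQ(U) \cong \CA_S$ from Proposition~\ref{prop:A_S as BQ of U} gives the claim.

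The main obstacle I anticipate is the bookkeeping in the surjectivity step, namely verifying cleanly that iterating $\theta_{p,q}$ really does express an arbitrary $v_{(m,h)}$, $0 \leq m \leq h-1$, as an ordered word in the $v_{(m_i,p_i)}$ --- this is where the coherence of the maps $\theta_{p,q}$ (the associativity computation in Remark~\ref{rem:k-graph from S}) is implicitly used, and one must take care that the factorization is well defined independently of the order in which the primes $p_i$ are listed. Everything else --- the two relations and the simplicity argument --- is immediate from results already in hand.
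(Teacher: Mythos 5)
Your construction of the $*$-homomorphism $\Psi\colon C^*(\Lambda_{S,\theta}) \to \QQ(U)$, $t_{(m,p)} \mapsto v_{(m,p)}$, is fine: the $v_{(m,p)}$ do satisfy relations (i) and (ii) of Remark~\ref{rem:k-graph from S}, and $\Psi$ is surjective because $U$ is generated by $\{(m,p) : p \in S,\, 0 \leq m \leq p-1\}$. The gap is in the injectivity step. You argue that simplicity of the \emph{codomain} $\QQ(U) \cong \CA_S$ forces $\ker\Psi$ to be $0$ or everything; but $\ker\Psi$ is an ideal of the \emph{domain} $C^*(\Lambda_{S,\theta})$, about which simplicity of $\QQ(U)$ says nothing. (The quotient map $C^*(U) \onto \QQ(U)$ is a surjection onto this very simple $C^*$-algebra with a large kernel.) As written, injectivity of $\Psi$ would require knowing in advance that $C^*(\Lambda_{S,\theta})$ is simple, or an appeal to a gauge-invariant uniqueness theorem for $k$-graph algebras --- neither of which you establish.

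The paper avoids this by running the map in the opposite direction: it checks that the $t_{(m,p)}$ give a representation of $U$ by isometries (relation (i) encodes exactly the semigroup law $(m,p)(n,q) = (m+pn,pq) = (n',q)(m',p)$) and that the resulting projections satisfy \eqref{eq:BQ} for every foundation set --- the only nontrivial point being the reduction of an arbitrary foundation set $F \subset U$ to the accurate one $F_a = \{(m,h) : 0 \leq m \leq h-1\}$ with $h$ the least common multiple of the occurring multiplicative parts, after which \eqref{eq:BQ} is just relation (ii). This yields a surjection $\QQ(U) \onto C^*(\Lambda_{S,\theta})$, and now simplicity of the \emph{domain} $\QQ(U)$ (Corollary~\ref{cor:UHF into BD seq split cr pr} and Proposition~\ref{prop:A_S as BQ of U}) together with $C^*(\Lambda_{S,\theta}) \neq 0$ gives injectivity. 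Your argument can be repaired either by switching to this direction, or by supplying an independent injectivity argument for $\Psi$ (e.g.\ exhibiting the paper's map as a one-sided inverse on generators); it is not complete as it stands.
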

\begin{proof}
We will work with $\QQ(U)$ in place of $\CA_S$ and invoke Proposition~\ref{prop:A_S as BQ of U}. Condition~(i) guarantees that $(m,p) \mapsto t_{(m,p)}$ yields a representation of $U$ by isometries as $U$ is generated by $(m,p)$ with $p \in S, 0 \leq m \leq p-1$, and $(m,p)(n,q) = (m+pn,pq) = (n',q)(m',p)$. (ii) holds for arbitrary $p \in H^+$ if we write $t_{(m,p)}$ for the product $t_{(m_1,p_1)}\cdots t_{(m_k,p_k)}$ where $(m_1,p_1)\cdots (m_k,p_k) = (m,p) \in U$ with $p_i \in S$. It is then straightforward to verify that we get a $*$-homomorphism $C^*(U) \to C^*(\Lambda_{S,\theta})$. 
Now let $F \subset U$ be a foundation set and set $h := \text{lcm}(\{h' : (m',h') \in F \text{ for some } 0 \leq m' \leq h'-1\})$. Then $F_a := \{ (m,h) : 0 \leq m \leq h-1\}$ is a foundation set that refines $F$. Therefore, it suffices to establish \eqref{eq:BQ} for $F_a$ in place of $F$. But as $F_a$ is accurate, \eqref{eq:BQ} takes the form $\sum_{m=0}^{h-1} t^{\phantom{*}}_{(m,h)}t_{(m,h)}^* = 1$, which follows from (ii) as explained in the proof of Proposition~\ref{prop:A_S as BQ of U}. Thus $v_{(m,p)} \mapsto t_{(m,p)}$ defines a surjective $*$-homomorphism $\QQ(U) \to C^*(\Lambda_{S,\theta})$. By simplicity, see Corollary~\ref{cor:UHF into BD seq split cr pr} and Proposition~\ref{prop:A_S as BQ of U}, this map is also injective.    
\end{proof}

\begin{rem}\label{rem:Lambda_S flip}
Similar to $\Lambda_{S,\theta}$, we can also consider the row-finite $k$-graph $\Lambda_{S,\sigma}$ with $\sigma_{p,q}$ being the flip, i.e.\ $\sigma_{p,q}(m,n) := (n,m)$. That is to say, we keep the skeleton of $\Lambda_{S,\theta}$, but replace $\theta$ by $\sigma$. In this case, it is easy to see that $C^*(\Lambda_{S,\sigma}) \cong \bigotimes_{p \in S} \CO_p$. 
\end{rem}

With regards to the $K$-theory of $\QQ_S$, it is interesting to ask whether $C^*(\Lambda_{S,\theta})$ and $C^*(\Lambda_{S,\sigma})$ are isomorphic or not. At least for $\lvert S \rvert \leq 2$, the answer is known to be positive.

\begin{prop} \label{prop:A_S for |S|=2}
Let $p,q \geq 2$ be two relatively prime numbers and $S = \{p,q\}$. Then $\CA_S \cong C^*(\Lambda_{S,\theta})\cong C^*(\Lambda_{S,\sigma}) \cong \CO_p \otimes \CO_q$. 
\end{prop}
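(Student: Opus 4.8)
The plan is to deduce the three isomorphisms from the Kirchberg--Phillips classification theorem together with the $K$-theory of rank-$2$ graph $C^*$-algebras. Two of the links are already in hand: $\CA_S\cong C^*(\Lambda_{S,\theta})$ by Corollary~\ref{cor:tor subalgebra via k-graphs}, and $C^*(\Lambda_{S,\sigma})\cong\CO_p\otimes\CO_q$ by Remark~\ref{rem:Lambda_S flip}. So the only thing left is to establish $C^*(\Lambda_{S,\theta})\cong C^*(\Lambda_{S,\sigma})$. Both of these are unital UCT Kirchberg algebras: $C^*(\Lambda_{S,\theta})\cong\CA_S$ is one by Corollary~\ref{cor:UHF into BD seq split cr pr}, and $C^*(\Lambda_{S,\sigma})\cong\CO_p\otimes\CO_q$ is a tensor product of two unital UCT Kirchberg algebras (nuclearity, simplicity and pure infiniteness pass to minimal tensor products, and the UCT is preserved by the Künneth theorem), hence again one. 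By Kirchberg--Phillips \cites{Kir,Phi}, it therefore suffices to exhibit an isomorphism of pointed $K$-theory, i.e.\ of the triples $(K_0,[1],K_1)$.

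For this step I would invoke Evans' computation of the $K$-theory of $k$-graph $C^*$-algebras via the spectral sequence of \cite{Evans}. When $k=2$ there is no room for higher differentials, so the spectral sequence collapses and the resulting description of $K_0$, $K_1$ and of the class of the unit is expressed \emph{entirely} in terms of the two vertex matrices of $\Lambda$ — that is, in terms of the skeleton alone, with no dependence on the factorization rule. Since $\Lambda_{S,\theta}$ and $\Lambda_{S,\sigma}$ have exactly the same skeleton (a single vertex with $p$ loops of one colour and $q$ of the other, so $M_p=(p)$ and $M_q=(q)$ as $1\times 1$ matrices), their $C^*$-algebras share the same pointed $K$-theory. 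Evaluating the formulas, one gets $K_0\cong\coker\bigl(\Z^2\to\Z,\ (x,y)\mapsto(1-p)x+(1-q)y\bigr)\cong\Z/g_S\Z$ with $[1]$ the canonical generator, and likewise $K_1\cong\Z/g_S\Z$, where $g_S=\gcd(p-1,q-1)$; this is consistent with what the Künneth theorem gives for $\CO_p\otimes\CO_q$, namely $K_0\cong\Z/(p-1)\Z\otimes\Z/(q-1)\Z\cong\Z/g_S\Z$ and $K_1\cong\op{Tor}(\Z/(p-1)\Z,\Z/(q-1)\Z)\cong\Z/g_S\Z$, with $[1]$ again the generator.

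Putting the two paragraphs together, Kirchberg--Phillips yields a unital isomorphism $C^*(\Lambda_{S,\theta})\cong C^*(\Lambda_{S,\sigma})$, and composing with the two isomorphisms noted at the outset gives $\CA_S\cong\CO_p\otimes\CO_q$. The one point that requires genuine care — and the only one that is rank-sensitive — is the assertion that Evans' $k=2$ computation sees only the skeleton and tracks the position of the unit correctly; for $k\ge 3$ the higher differentials in the spectral sequence could in principle detect the factorization rule, and the expectation that (under suitable hypotheses) they do not is exactly Conjecture~\ref{conj:k-graph}. As a completely independent alternative, one can avoid $k$-graphs altogether and compute $K_*(\CA_S)$ from the crossed-product model $\CA_S\cong M_{d^\infty}\rtimes^e_\alpha H^+$ via two successive applications of the Pimsner--Voiculescu sequence, exactly as in the proof of Proposition~\ref{prop:UHF cross prod torsion K-th}, arriving at the same triple $(\Z/g_S\Z,1,\Z/g_S\Z)$ and hence, through classification, at the same conclusion.
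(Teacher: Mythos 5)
Your argument is correct and follows essentially the same route as the paper's own proof: the paper likewise observes that $\Lambda_{S,\theta}$ and $\Lambda_{S,\sigma}$ are row-finite $2$-graphs with the same skeleton whose $C^*$-algebras are unital UCT Kirchberg algebras, and concludes by citing Evans' rank-two $K$-theory formula \cite{Evans}*{Corollary~5.3}, which depends only on the vertex matrices, together with Kirchberg--Phillips. Your explicit evaluation of that formula (and the check that $[1]$ generates $K_0\cong\Z/g_S\Z$ on both sides) merely spells out what the paper's one-line citation leaves implicit.
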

\begin{proof}
We have seen in Corollary~\ref{cor:tor subalgebra via k-graphs} and Remark~\ref{rem:Lambda_S flip} that the UCT Kirchberg algebras $\CA_S$ and $\CO_p \otimes \CO_q$ are both expressible as $C^*$-algebras associated with row-finite $2$-graphs $\Lambda_{S,\theta}$ and $\Lambda_{S,\sigma}$ sharing the same skeleton. The claim therefore follows from \cite{Evans}*{Corollary~5.3}.
\end{proof}

Concerning a generalization of Proposition~\ref{prop:A_S for |S|=2} to the case of $\lvert S \rvert \geq 3$, we learned from Aidan Sims that the following conjecture for $k$-graphs might be true:

\begin{conj}\label{conj:k-graph}
Suppose $\Lambda$ and $\Lambda'$ are row-finite $k$-graphs without sources such that $C^*(\Lambda)$ and $C^*(\Lambda')$ are unital, purely infinite and simple. If $\Lambda$ and $\Lambda'$ have the same skeleton, then the associated $C^*$-algebras are isomorphic.
\end{conj}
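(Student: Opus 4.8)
The plan is to reduce the conjecture to a $K$-theoretic statement and then to a statement purely about skeletons. Every row-finite $k$-graph $C^*$-algebra is nuclear and, being the $C^*$-algebra of an amenable groupoid, satisfies the UCT; so under the standing hypotheses $C^*(\Lambda)$ and $C^*(\Lambda')$ are unital UCT Kirchberg algebras. By the Kirchberg-Phillips classification theorem \cites{Kir,Phi} it then suffices to show that the triple $(K_0(C^*(\Lambda)),[1],K_1(C^*(\Lambda)))$ is determined by the skeleton of $\Lambda$, i.e.\ by the finite vertex set $\Lambda^0$ together with the $k$ vertex matrices $M_1,\dots,M_k$, where $M_i$ records the number of $i$-coloured edges between each ordered pair of vertices. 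The class of the unit, $[1]=\sum_{v\in\Lambda^0}[p_v]$, is manifestly skeleton-determined, so the whole issue is the dependence of $K_0$ and $K_1$, as abstract abelian groups, on the factorization rules.

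The tool I would use is the $K$-theory spectral sequence for $k$-graph $C^*$-algebras: Evans' spectral sequence \cite{Evans} when $k=2$, and in general the Kasparov-type spectral sequence \cite{kasparov} (see also \cite{barlak15}) applied to the Cuntz-Pimsner presentation of $C^*(\Lambda)$ as the algebra of the product system over $\N^k$ with coefficient algebra $\C^{\Lambda^0}$ and generating bimodules $X_1,\dots,X_k$. Its $E^2$-page is the homology of the Koszul-type complex $\bigwedge^\bullet\Z^k\otimes\Z^{\Lambda^0}$ whose differentials are assembled from the commuting endomorphisms $\id-M_i^{\top}$ of $\Z^{\Lambda^0}$; moreover the $KK$-classes $[X_i]\in KK(\C^{\Lambda^0},\C^{\Lambda^0})$ feeding the construction are nothing but the classes of $M_i$. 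Hence the entire $E^2$-page, and a fortiori the associated graded of $K_*(C^*(\Lambda))$, is skeleton-determined. For $k\le 2$ the spectral sequence has at most two non-zero columns, so it degenerates at $E^2$ and only an extension problem survives, which is solved uniformly in \cite{Evans}*{Corollary~5.3}; this is exactly what underlies Proposition~\ref{prop:A_S for |S|=2}.

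For $k\ge 3$ there are two new phenomena to control: the higher differentials $d^r$ ($r\ge 2$), which need not vanish and could a priori involve secondary (Toda-bracket type) data of the product system beyond the $M_i$; and the extension problem reassembling $E^\infty$ into $K_*(C^*(\Lambda))$. I would proceed by induction on $k$: deleting the $k$-th colour produces a $(k-1)$-graph $\Lambda_{k-1}$ with the same vertex set, and $C^*(\Lambda)$ is the Cuntz-Pimsner algebra of a single self-bimodule $Y$ over $C^*(\Lambda_{k-1})$ encoding the $k$-th edge set together with its factorization rules against the first $k-1$ colours. The six-term Pimsner-Voiculescu-type sequence for $Y$ then reduces the claim to showing that $[Y]$ acts on $K_*(C^*(\Lambda_{k-1}))$ --- which the inductive hypothesis has already identified as an abstract group --- by a map depending only on the skeleton.

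The main obstacle is precisely this last point: a coordinate shift could act on the (possibly torsion) $K$-theory of a sub-$k$-graph algebra in a way that genuinely sees the factorization rules, and ruling this out uniformly is the heart of the conjecture. The natural first target --- and the one sufficient for $\CA_S$ --- is the single-vertex case, where $\Lambda^0$ is a point, each $M_i$ is the $1\times 1$ matrix $(n_i)$, and the $E^2$-page is an explicit Koszul complex of cyclic groups $\Z/(n_i-1)\Z$; there one may hope either to check degeneration and solve the extensions by hand, or, more robustly, to build an explicit $KK$-equivalence (ideally a $*$-isomorphism after stabilization) between $C^*(\Lambda)$ and $\bigotimes_i\CO_{n_i}$ through a chain of elementary moves on factorization rules that are visibly $K$-theory preserving. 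Producing such a repertoire of moves, or otherwise proving that the higher differentials and the final extensions are skeleton-determined, is the crux and the reason the conjecture remains open.
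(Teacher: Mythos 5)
The statement you were asked to prove is Conjecture~\ref{conj:k-graph}; the paper does not prove it, and neither do you --- your final paragraph concedes as much. What you have written is an accurate account of why the statement is plausible and of where the difficulty lies, and it matches the paper's brief discussion: the reduction to showing that $(K_0,[1],K_1)$ is determined by the skeleton, via \cite{KP}*{Theorem~5.5} and Kirchberg--Phillips, is exactly the paper's remark following the conjecture, and the use of Evans' spectral sequence to settle the case $k\le 2$ is exactly what underlies Proposition~\ref{prop:A_S for |S|=2}. As a survey of the state of the problem your proposal is essentially correct and consistent with the paper; as a proof it has a gap precisely where the conjecture is open.

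Concretely, one sentence is wrong as stated: from ``the entire $E^2$-page is skeleton-determined'' it does not follow ``a fortiori'' that the associated graded of $K_*(C^*(\Lambda))$ is skeleton-determined. The associated graded is the $E^\infty$-page, which is a subquotient of $E^2$ computed via the higher differentials $d_r$, $r\ge 2$, and it is exactly these differentials (together with the subsequent extension problems) that are not known to depend only on the vertex matrices; for $k\le 2$ they are vacuous, which is why that case closes, and for $k\ge 3$ they are the whole problem. You identify this correctly two sentences later, so the intermediate claim should simply be deleted rather than repaired. Your proposed induction on colours via a self-bimodule $Y$ over $C^*(\Lambda_{k-1})$ relocates but does not remove the difficulty: showing that $[Y]$ acts on the (possibly torsion) $K$-theory of $C^*(\Lambda_{k-1})$ in a skeleton-determined way is equivalent to the original question. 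For the single-vertex case relevant to $\CA_S$, note that Theorem~\ref{thm:K-theory for A_S}~(d) extracts exactly what survives this obstruction --- a divisibility bound on torsion orders read off from the skeleton-determined $E_2$-term --- without resolving the higher differentials or the extensions; that is an honest measure of how far the known techniques reach.
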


Note that $C^*(\Lambda)$ and $C^*(\Lambda')$ are indeed unital UCT Kirchberg algebras, as separability, nuclearity and the UCT are automatically satisfied, see \cite{KP}*{Theorem~5.5}. We will come back to Conjecture~\ref{conj:k-graph} at the end of the next section.

\section{\texorpdfstring{Towards a classification of $\QQ_S$}{Towards a classification}}\label{sec:classification}

This final section provides a survey of the progress on the classification of $\QQ_S$ that we achieve through the preceding sections and a spectral sequence argument for $K_*(\CA_S)$, see Theorem~\ref{thm:main result} and Theorem~\ref{thm:K-theory for A_S}. Recall that $N = \Z\bigl[\{ \frac{1}{p} : p \in S\}\bigr]$ and $g_S$ denotes the greatest common divisor of $\{p-1 : p \in S\}$. We begin by stating our main result.

\begin{thm}\label{thm:main result}
Let $S \subset \N^\times\setminus\{1\}$ be a non-empty family of relatively prime numbers. Then the $K$-theory of $\QQ_S$ satisfies
\[\begin{array}{c}
K_{i}(\QQ_{S})\cong \Z^{2^{\lvert S \rvert-1}} \oplus K_i(\CA_S),\quad i=0,1,\end{array}\]
where $K_i(\CA_S)$ is a torsion group. Moreover, the following statements hold: 
\begin{enumerate}[(a)]
\item If $g_S=1$, then $K_i(\QQ_S)$ is free abelian in $2^{\lvert S \rvert-1}$ generators for $i=0,1$, and $[1]=0$.
\item If $\lvert S \rvert=1$, then $(K_{0}(\QQ_{S}),[1],K_{1}(\QQ_{S})) \cong (\Z \oplus \Z/g_S\Z, (0,1), \Z)$.
\item If $\lvert S \rvert=2$, then $(K_{0}(\QQ_{S}),[1],K_{1}(\QQ_{S})) \cong (\Z^2 \oplus \Z/g_S\Z, (0,1), \Z^2 \oplus \Z/g_S\Z)$.
\end{enumerate}
\end{thm}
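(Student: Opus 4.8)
The plan is to assemble Theorem~\ref{thm:main result} from the structural results already in hand, reducing everything to the computation of $K_*(\CA_S)$, and then to carry out that computation in the relevant low-complexity cases. First, Corollary~\ref{cor:torsion and free part K-theory} together with Corollary~\ref{cor:subalgebra for torsion part} gives the direct-sum splitting $K_i(\QQ_S)\cong K_i(C_0(\R)\rtimes_\beta H)\oplus K_i(\CA_S)$, where the second summand is the torsion subgroup and the first is the torsion-free part; by Proposition~\ref{prop:K-theory torsion free part} the free part is $\Z^{2^{\lvert S\rvert-1}}$, and by Proposition~\ref{prop:UHF cross prod torsion K-th} the group $K_i(\CA_S)$ is indeed torsion (finite when $S$ is finite, via Lemma~\ref{lem:N/gN}). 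This already yields the displayed formula $K_i(\QQ_S)\cong\Z^{2^{\lvert S\rvert-1}}\oplus K_i(\CA_S)$, so it remains to identify $K_i(\CA_S)$ and the class of the unit in the three listed regimes.

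For part (a), I would use the $a$-adic picture together with the Pimsner–Voiculescu-type description of $K_*(M_{d^\infty}\rtimes^e_\alpha H^+)$. When $g_S=1$ one can choose $p\in S$ with $\gcd(p-1,q-1)$-structure making $N/(p-1)N$ trivial, or more directly argue iteratively: by the exact sequences in the proof of Proposition~\ref{prop:UHF cross prod torsion K-th}, each crossed product by $\alpha_p$ has $K_0\cong N/(p-1)N$ and $K_1=0$ at the first stage, and one shows (using Lemma~\ref{lem:N/gN}, which identifies $N/(p-1)N$ with a cyclic group whose order is the $d$-free part of $p-1$) that when $g_S=1$ the successive quotients collapse to $0$; hence $K_*(\CA_S)=0$. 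One then concludes $K_i(\QQ_S)\cong\Z^{2^{\lvert S\rvert-1}}$ and $[1]=0$, the latter because $\QQ_S\cong\CA_S\rtimes$-type corner arguments force the unit into the image of the (now trivial) torsion part plus a class that is $0$ in $K_0$ of $C_0(\R)\rtimes_\beta H$ — more carefully, $[1]$ lands in the torsion summand since in the stabilization $C_0(\R)\rtimes N\rtimes H$ it is represented by a projection in the stable algebra, and the torsion-free summand carries no unit contribution.

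For parts (b) and (c) I would invoke the concrete identifications of $\CA_S$ already established: for $\lvert S\rvert=1$, $\CA_{\{p\}}\cong\CO_p$ (noted after Definition~\ref{def:torsion subalgebra}), so $K_0(\CA_S)\cong\Z/(p-1)\Z=\Z/g_S\Z$ and $K_1(\CA_S)=0$, and combining with the free part $\Z^{2^0}=\Z$ recovers Hirshberg's formula, with $[1]$ computed from the standard Cuntz-algebra calculation $[1]=1\in\Z/(p-1)\Z$, which under the decomposition sits in the torsion summand as $(0,1)$. For $\lvert S\rvert=2$, Proposition~\ref{prop:A_S for |S|=2} gives $\CA_S\cong\CO_p\otimes\CO_q$, whose $K$-theory by the Künneth formula is $K_0\cong K_1\cong\Z/\gcd(p-1,q-1)\Z=\Z/g_S\Z$; feeding this into the splitting with free part $\Z^2$ gives $K_i(\QQ_S)\cong\Z^2\oplus\Z/g_S\Z$. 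The class $[1]$ is then located by noting that under $\CA_S\hookrightarrow\QQ_S$ the unit of $\CA_S\cong\CO_p\otimes\CO_q$ maps to $[1]_{\QQ_S}$, and $[1]_{\CO_p\otimes\CO_q}$ corresponds to $(0,1)$ in $\Z^2\oplus\Z/g_S\Z$ once one records that the $K_0$ of the tensor product has its free rank coming from the $C_0(\R)\rtimes_\beta H$ part (which is unital-free) while the unit is torsion — this is the one point requiring care, namely tracking $[1]$ through the Morita equivalence and the splitting map $K_*(j\rtimes^e H^+)$.

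The main obstacle I anticipate is precisely the bookkeeping of the unit class $[1]$ under the identifications: Theorem~\ref{thm:duality} and Theorem~\ref{thm:decomposition of K-theory} pass through a stabilization and a Morita equivalence, under which ``the unit'' is only defined up to the choice of a full projection, so one must argue that the composite $M_{d^\infty}\rtimes^e_\alpha H^+\xrightarrow{\sim}\CA_S\hookrightarrow\QQ_S$ sends the unit to the unit (this is clear since all these maps are unital) and then that $K_0(j\rtimes^e H^+)$ carries $[1_{\CA_S}]$ to $[1_{\QQ_S}]$ — again automatic by unitality — so that the real content is computing $[1]$ inside $K_0(\CA_S)$, which in cases (b),(c) is the classical Cuntz/Künneth computation. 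The remaining genuinely computational input, that $K_*(\CA_S)$ vanishes when $g_S=1$ and is $\Z/g_S\Z$ in the two-element case, is supplied respectively by the spectral-sequence/$k$-graph analysis of Section~\ref{sec:classification} (Theorem~\ref{thm:K-theory for A_S}) and by Proposition~\ref{prop:A_S for |S|=2}, so in the end Theorem~\ref{thm:main result} is essentially an assembly of these pieces plus the Künneth formula.
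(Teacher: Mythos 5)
Your proposal is correct and follows essentially the same route as the paper: the displayed decomposition is assembled from Corollary~\ref{cor:torsion and free part K-theory}, Proposition~\ref{prop:K-theory torsion free part} and Corollary~\ref{cor:subalgebra for torsion part}, and the three cases are then read off from Theorem~\ref{thm:K-theory for A_S} (equivalently Proposition~\ref{prop:A_S for |S|=2} plus K\"unneth for $|S|=2$), with the unit class located via unitality of the split-injective inclusion $\CA_S\into\QQ_S$. The paper leaves the $[1]$-bookkeeping and the K\"unneth computation implicit, but your treatment of both is the intended one.
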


\begin{rem}\label{rem:K-theory of Q_S mod tor}
Note that for $S = \left\lbrace p \right\rbrace$, the torsion subalgebra $\CA_S$ is canonically isomorphic to the Cuntz algebra $\CO_p$. Therefore, Theorem~\ref{thm:main result}~(b) recovers known results by Hirshberg \cite{Hir}*{Example~1, p.~106} and Katsura \cite{KatsuraIV}*{Example~A.6}. Indeed, it is already clear from the presentation for $\CO(E_{p,1})$ described in \cite{KatsuraIV}*{Example~A.6} that it coincides with $\QQ_S$. Theorem~\ref{thm:main result}~(c) shows an unexpected result for the $K$-groups of $\QQ_S$ in the case of $S=\{p,q\}$ for two relatively prime numbers $p$ and $q$ with $g_S > 1$: $K_1(\QQ_S)$ has torsion and is therefore, for instance, not a graph $C^*$-algebra, see \cite{RS}*{Theorem~3.2}. By virtue of (a), Theorem~\ref{thm:main result} also explains why $\QQ_\N$ and $\QQ_{2}$ have torsion free $K$-groups. More importantly, it shows that the presence of $2$ in the family $S$ is not the only way to achieve this. Indeed, $S$ can contain at most one even number. If $g_S=1$, then $S$ must contain an even number, and there are many examples, e.g.\ $S$ with $2^m+1,2n \in S$ for some $m,n \geq 1$.
\end{rem}

In view of the Kirchberg-Phillips classification theorem \cites{Kir,Phi}, we get the following immediate consequence of Theorem~\ref{thm:main result}.

\begin{cor}\label{cor:isomorphism classes for |S| at most 2}
Let $S,T \subset \N^\times \setminus\{1\}$ be non-empty families of relatively prime numbers. Then $\QQ_S \cong \QQ_T$ implies $\lvert S \rvert = \lvert T \rvert$. Moreover, the following statements hold:
\begin{enumerate}[(a)]
\item If $g_S=1=g_T$, then $\QQ_S$ is isomorphic to $\QQ_T$ if and only if $\lvert S \rvert=\lvert T \rvert$. 
\item If $\lvert S \rvert \leq 2$, then $\QQ_S$ is isomorphic to $\QQ_T$ if and only if $\lvert S \rvert=\lvert T \rvert$ and $g_S=g_T$.
\end{enumerate}
\end{cor}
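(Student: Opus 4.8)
The plan is to deduce the statement from the Kirchberg--Phillips classification theorem \cites{Kir,Phi} together with the $K$-theory computations of Theorem~\ref{thm:main result}. Since $\QQ_S$ and $\QQ_T$ are unital UCT Kirchberg algebras by Proposition~\ref{prop:Q_S as O-alg}, we have $\QQ_S \cong \QQ_T$ if and only if there is an isomorphism $K_0(\QQ_S) \to K_0(\QQ_T)$ sending $[1_{\QQ_S}]$ to $[1_{\QQ_T}]$ and an isomorphism $K_1(\QQ_S) \cong K_1(\QQ_T)$. Thus the whole corollary is a matter of comparing the triples $(K_0(\QQ_S),[1],K_1(\QQ_S))$ and $(K_0(\QQ_T),[1],K_1(\QQ_T))$ as described in Theorem~\ref{thm:main result}.

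For the first assertion I would use the decomposition $K_i(\QQ_S) \cong \Z^{2^{\lvert S\rvert-1}} \oplus K_i(\CA_S)$ with $K_i(\CA_S)$ torsion: hence the torsion-free rank of $K_i(\QQ_S)$ equals $2^{\lvert S\rvert-1}$ (interpreted as $\aleph_0$ when $S$ is infinite). An isomorphism $\QQ_S \cong \QQ_T$ forces these ranks to coincide, which gives $\lvert S\rvert = \lvert T\rvert$, since $2^{\lvert S\rvert-1} = 2^{\lvert T\rvert-1}$ in the finite case, and both cardinalities are countably infinite otherwise.

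For (a), if $g_S = 1 = g_T$ then Theorem~\ref{thm:main result}~(a) yields $(K_0(\QQ_S),[1],K_1(\QQ_S)) = (\Z^{2^{\lvert S\rvert-1}},0,\Z^{2^{\lvert S\rvert-1}})$ and similarly for $T$; so whenever $\lvert S\rvert = \lvert T\rvert$ these triples agree and Kirchberg--Phillips gives $\QQ_S \cong \QQ_T$, while the converse direction is the first assertion. For (b), if $\lvert S\rvert \leq 2$ then any $T$ with $\QQ_S \cong \QQ_T$ satisfies $\lvert T\rvert = \lvert S\rvert \leq 2$ by the first assertion, so both algebras are covered by Theorem~\ref{thm:main result}~(b) or (c). In either case the torsion subgroup of $K_0$ is $\Z/g_S\Z$ (resp.\ $\Z/g_T\Z$), and since a finite cyclic group determines its order, $\QQ_S \cong \QQ_T$ implies $g_S = g_T$; conversely, if $\lvert S\rvert = \lvert T\rvert$ and $g_S = g_T$ the triples in Theorem~\ref{thm:main result}~(b)/(c) coincide and Kirchberg--Phillips applies. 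There is no serious obstacle here; the only point demanding a little care is the bookkeeping of $[1]$, namely that in cases (b) and (c) the class of the unit sits as the canonical generator $(0,1)$ of the $\Z/g_S\Z$-summand, so that matching the torsion subgroups of $K_0$ automatically respects $[1]$.
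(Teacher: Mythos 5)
Your argument is correct and is exactly the intended one: the paper derives this corollary directly from Theorem~\ref{thm:main result} via Kirchberg--Phillips, using that the torsion-free rank $2^{\lvert S\rvert-1}$ of $K_i(\QQ_S)$ determines $\lvert S\rvert$ and that the torsion subgroup $\Z/g_S\Z$ of $K_0$ determines $g_S$ when $\lvert S\rvert\leq 2$. Your remarks on the infinite-rank case and on the position of $[1]$ fill in exactly the bookkeeping the paper leaves implicit.
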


Observe that the decomposition of $K_*(\QQ_S)$ claimed in Theorem~\ref{thm:main result} follows from Corollary~\ref{cor:torsion and free part K-theory}, Proposition~\ref{prop:K-theory torsion free part} and Corollary~\ref{cor:subalgebra for torsion part}. To prove our main result, it is therefore enough to establish the following theorem reflecting our present knowledge on the torsion subalgebra $\CA_S$, an object which is certainly of interest in its own right.

\begin{thm}\label{thm:K-theory for A_S}
Let $S \subset \N^\times\setminus\{1\}$ be a non-empty family of relatively prime numbers. Then the following statements hold:
\begin{enumerate}[(a)]
\item If $g_S=1$, then $\CA_S \cong \CO_2 \cong \bigotimes_{p \in S} \CO_p$.
\item If $S = \left\lbrace p \right\rbrace$, then $\CA_S \cong \CO_p$.
\item If $S = \left \lbrace p,q \right\rbrace$ with $p\neq q$, then $\CA_S \cong \CO_p \otimes \CO_q$.
\item For $\lvert S \rvert\geq 3$ and $g_S > 1$, $K_i(\CA_S)$ is a torsion group in which the order of any element divides $g_S^{2^{\lvert S \rvert-2}}$. Moreover, $K_i(\CA_S)$ is finite whenever $S$ is finite.
\end{enumerate}
\end{thm}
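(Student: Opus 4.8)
The plan is to reduce everything to a single $K$-theory computation for the crossed product $M_{d^\infty}\rtimes^e_\alpha H^+$. Parts (b) and (c) need nothing new: (b) is the isomorphism $\CA_{\{p\}}\cong\CO_p$ recorded after Definition~\ref{def:torsion subalgebra}, and (c) is Proposition~\ref{prop:A_S for |S|=2}. For (a) when $\lvert S\rvert\le 2$ one combines (b), (c) with the Künneth formula: if $g_S=1$ then $K_0(\CO_p)\otimes K_0(\CO_q)=\Z/(p-1)\Z\otimes\Z/(q-1)\Z=0$ and the Tor-term vanishes as well, so $K_*(\CA_S)=0$, and the same elementary computation shows $\bigotimes_{p\in S}\CO_p\cong\CO_2$. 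For $\lvert S\rvert\ge 3$ the strategy is to establish the exponent bound of (d) for \emph{every} $g_S$; specializing to $g_S=1$ then gives $K_*(\CA_S)=0$, and since $\CA_S$ is a unital UCT Kirchberg algebra by Corollary~\ref{cor:UHF into BD seq split cr pr} together with Corollary~\ref{cor:subalgebra for torsion part}, the Kirchberg--Phillips theorem \cites{Kir,Phi} forces $\CA_S\cong\CO_2$. That $\bigotimes_{p\in S}\CO_p\cong\CO_2$ when $g_S=1$ follows by an iterated Künneth argument: the greatest common divisor of the (possibly infinite) set $\{p-1:p\in S\}$ is already attained on a finite subset $S'\subseteq S$, so $\bigotimes_{p\in S'}\CO_p$ has vanishing $K$-theory and absorbs the remaining tensor factors.

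For the computation itself I would use $\CA_S\cong M_{d^\infty}\rtimes^e_\alpha H^+$ with $H^+\cong\N^k$, $k=\lvert S\rvert$, and pass to the minimal automorphic dilation (as in \eqref{eq:full corner Q_S} and \cite{Lac}) to realize $\CA_S$, up to stable isomorphism, as $M_{d^\infty,\infty}\rtimes_{\alpha_\infty}\Z^k$. Here $K_1(M_{d^\infty,\infty})=0$ and $K_0(M_{d^\infty,\infty})\cong N$, and, as in the proof of Proposition~\ref{prop:UHF cross prod torsion K-th}, the $i$-th generator of $\Z^k$ acts on $N$ by multiplication by $p_i^{-1}$, which is an \emph{invertible} element of $N$ because every prime dividing $p_i$ divides $d$. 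Writing $\Z^k=\Z\times\Z^{k-1}$ and iterating the Pimsner--Voiculescu sequence \cites{Pas,CunPV} --- which is exactly what Kasparov's spectral sequence for the $\Z^k$-action unravels to in the present situation --- one builds $K_*(\CA_S)$ from $N$ in $k$ steps. At each step the groups involved remain modules over the PID $N$, and the relevant connecting map is multiplication by $1-p_i^{-1}$, i.e.\ by $p_i-1$ up to a unit; hence, for any module $M$ killed by an integer $m$, both the kernel and the cokernel of this map on $M$ are killed by $\gcd(m,p_i-1)$.

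The induction then reads as follows. After the first step we have $K_0=N/(p_1-1)N$ and $K_1=0$; after the second, both $K$-groups are killed by $g_{\{p_1,p_2\}}=\gcd(p_1-1,p_2-1)$. In general, if after incorporating $p_1,\dots,p_j$ both $K$-groups are killed by $g_j^{b_j}$ with $g_j:=\gcd(p_1-1,\dots,p_j-1)$, then the six-term sequence presents each $K$-group of the next stage as an extension of a cokernel by a kernel, each killed by $\gcd(g_j^{b_j},p_{j+1}-1)$; by the elementary divisibility $\gcd(m^b,n)\mid\gcd(m,n)^b$ this common annihilator divides $g_{j+1}^{b_j}$, so the extension is killed by $g_{j+1}^{2b_j}$. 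With $b_2=1$ this yields $b_k=2^{k-2}$, which is the assertion of (d) about orders of elements. Finiteness for finite $S$ drops out of the same induction (kernels and cokernels of endomorphisms of finite abelian groups are finite, using that each $N/gN$ is finite), and the torsion statement in general is Proposition~\ref{prop:UHF cross prod torsion K-th}.

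The main obstacle I anticipate is not a single hard estimate but the bookkeeping needed to make the iteration rigorous: one must check that $M_{d^\infty,\infty}\rtimes\Z^k\cong(M_{d^\infty,\infty}\rtimes\Z)\rtimes\Z^{k-1}$ equivariantly, that (by naturality of Pimsner--Voiculescu) the induced $\Z^{k-1}$-action on the $K$-theory of the first crossed product is still multiplication by the $p_i^{-1}$ on the relevant subquotients of $N$, and --- crucially --- that these subquotients stay $N$-modules, since it is precisely the invertibility of $p_i$ that turns ``$p_i-1$'' into ``$\gcd(m,p_i-1)$'' and thereby keeps all annihilators controlled by powers of $g_S$ rather than by powers of the much larger numbers $p_i-1$. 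An alternative route, avoiding dilations, would be to run Evans' spectral sequence \cite{Evans} on the $k$-graph model $\Lambda_{S,\theta}$ of Corollary~\ref{cor:tor subalgebra via k-graphs}, with the combinatorics of the factorization maps $\theta_{p,q}$ playing the role of the $\Z^k$-action above.
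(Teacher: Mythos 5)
Your overall strategy is the same as the paper's: parts (b), (c) are quoted results, part (a) reduces to the vanishing of $K_*(\CA_S)$ plus Kirchberg--Phillips and a K\"unneth computation for the tensor product, and part (d) is obtained by dilating $M_{d^\infty}\rtimes^e_\alpha H^+$ to a $\Z^k$-crossed product of $M_{d^\infty,\infty}$ and exploiting that each generator acts on $K_0\cong N$ by the unit $p_i^{-1}$. The paper runs Kasparov's spectral sequence and computes the whole $E_2$-term at once via the Koszul complex $h^p(e)=\sum_\ell(p_\ell-1)e_\ell\wedge e$ and Smith normal forms (Lemma~\ref{lem:ker d^p / im d^(p-1)}), obtaining $\binom{k-1}{p-1}$ copies of a subgroup of $\Z/g_k\Z$ in filtration degree $p$ and summing $\sum_{p\equiv q\,(2)}\binom{k-1}{p-1}=2^{k-2}$; you instead iterate Pimsner--Voiculescu one generator at a time, which is the same computation unrolled.

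There is, however, a concrete gap in your induction for $\lvert S\rvert\geq 4$. Your step ``kernel and cokernel of $1-\beta_*$ on $K_i(B_j)$ are killed by $\gcd(g_j^{b_j},p_{j+1}-1)$'' requires that $1-\beta_*$ be multiplication by $(p_{j+1}-1)$ times a unit of $N$ on the \emph{whole} group $K_i(B_j)$. Naturality of the Pimsner--Voiculescu sequence only identifies $\beta_*$ as induced multiplication by $p_{j+1}^{-1}$ on the \emph{graded pieces} of the PV filtration of $K_i(B_j)$; on the total group $\beta_*$ may differ from this multiplication by a map shifting the filtration, and no trace argument is available to rule this out since the intermediate crossed products $B_j$ are already purely infinite for $j\geq 1$. (The first genuine two-step extension occurs in $K_*(B_3)$, so the issue bites when you pass to $B_4$.) The fix is to strengthen the induction hypothesis: carry along the statement that $K_i(B_j)$ admits a filtration with $r_j^{(i)}$ graded pieces, each a subquotient of $N$ killed by $g_j$ on which all later $\beta_*$ act as induced multiplication by $p^{-1}$. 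Then kernels and cokernels of $1-\beta_*$ inherit filtrations with the same number of pieces, each killed by $\gcd(g_j,p_{j+1}-1)=g_{j+1}$, and the PV extension gives $r_{j+1}^{(i)}=r_j^{(0)}+r_j^{(1)}$; starting from $(r_1^{(0)},r_1^{(1)})=(1,0)$ this yields $r_k^{(i)}=2^{k-2}$ and hence exactly the bound $g_k^{2^{k-2}}$. This filtration bookkeeping is precisely what the spectral sequence of Theorem~\ref{thm:spec seq for cr prod} automates, and is the reason the paper phrases the argument that way. With that repair (and it changes nothing in your conclusions for (a), for finiteness, or for $\lvert S\rvert\leq 3$), your proof is correct.
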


Note that in the case of infinite $S$ with $g_S > 1$, part (d) still makes sense within the realm of supernatural numbers. Based on Theorem~\ref{thm:main result} and Theorem~\ref{thm:K-theory for A_S}, we suspect that the general situation is in accordance with Conjecture~\ref{conj:k-graph}: 

\begin{conj}\label{conj:K-theory of QQ_S}
For a family $S \subset \N^\times\setminus\{1\}$ of relatively prime numbers with $\lvert S\rvert\geq 2$, $\CA_S$ is isomorphic to $\bigotimes_{p \in S} \CO_p$. Equivalently, $\QQ_S$ is the unital UCT Kirchberg algebra with 
\[
(K_0(\QQ_S),[1],K_1(\QQ_S)) = (\Z^{2^{\lvert S \rvert-1}} \oplus (\Z/g_S\Z)^{2^{\lvert S \rvert-2}},(0,e_1),\Z^{2^{\lvert S \rvert-1}} \oplus (\Z/g_S\Z)^{2^{\lvert S \rvert-2}}),
\]
where $e_1 = (\delta_{1,j})_j \in (\Z/g_S\Z)^{2^{\lvert S \rvert-2}}$. In particular, if $S,T \subset \N^\times \setminus\{1\}$ are non-empty sets of relatively prime numbers, then $\QQ_S$ is isomorphic to $\QQ_T$ if and only if $\lvert S \rvert=\lvert T \rvert$ and $g_S=g_T$.
\end{conj}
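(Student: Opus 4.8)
\emph{Reduction to a $K$-theoretic statement.} Both $\CA_S$ and $\bigotimes_{p\in S}\CO_p$ are unital UCT Kirchberg algebras: the former by Corollary~\ref{cor:UHF into BD seq split cr pr} together with Corollary~\ref{cor:subalgebra for torsion part}, the latter being an (infinite) tensor product of Cuntz algebras. By the Kirchberg--Phillips theorem \cites{Kir,Phi}, the asserted isomorphism $\CA_S\cong\bigotimes_{p\in S}\CO_p$ is therefore equivalent to an isomorphism of the pointed invariants $(K_0,[1],K_1)$, and the equivalent reformulation in terms of the explicit $K$-groups of $\QQ_S$ then follows from the decomposition $K_i(\QQ_S)\cong\Z^{2^{\lvert S\rvert-1}}\oplus K_i(\CA_S)$ of Theorem~\ref{thm:main result}. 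The plan is thus to compute $K_*\bigl(\bigotimes_{p\in S}\CO_p\bigr)$ explicitly, to compute $K_*(\CA_S)$, and to match the two, including the class of the unit.

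\emph{The tensor product side.} First I would pin down the target invariant. Using that $K_0(\CO_p)\cong\Z/(p-1)\Z$ is generated by $[1]$ while $K_1(\CO_p)=0$, an iterated application of the Künneth theorem for nuclear $C^*$-algebras computes the invariant by induction on $\lvert S\rvert\ge 2$. Write $m_j:=\gcd\{p_i-1:1\le i\le j\}$ and suppose the tensor product of the first $j$ factors satisfies $K_0\cong K_1\cong(\Z/m_j\Z)^{2^{j-2}}$ (the base case $j=2$ being $\Z/m_2\Z$ in each degree). Tensoring with $\CO_{p_{j+1}}$ and using that $\otimes$ and $\op{Tor}$ of cyclic groups both return $\Z/\gcd$, while the Künneth sequence splits (unnaturally, the algebras being UCT Kirchberg), one finds that both $K_0$ and $K_1$ become $(\Z/m_{j+1}\Z)^{2^{j-1}}$, so the inductive hypothesis is preserved. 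At the top stage $m_k=g_S$, yielding $(\Z/g_S\Z)^{2^{\lvert S\rvert-2}}$ in each degree; for infinite $S$ one passes to the limit by continuity of $K$-theory. A parallel bookkeeping of $[1]=[1]\otimes\dots\otimes[1]$ through the splitting identifies its class as $(0,e_1)$.

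\emph{The torsion-subalgebra side and the main obstacle.} It remains to prove $K_*(\CA_S)\cong(\Z/g_S\Z)^{2^{\lvert S\rvert-2}}$ with the matching unit. For finite $S$ I would use the $k$-graph model $\CA_S\cong C^*(\Lambda_{S,\theta})$ of Corollary~\ref{cor:tor subalgebra via k-graphs} and feed $\Lambda_{S,\theta}$ into Evans' spectral sequence \cite{Evans} (equivalently, Kasparov's spectral sequence \cite{kasparov} for the $H^+$-action on $M_{d^\infty}$). Since $\Lambda_{S,\theta}$ and the flip $k$-graph $\Lambda_{S,\sigma}$, for which $C^*(\Lambda_{S,\sigma})\cong\bigotimes_{p\in S}\CO_p$ by Remark~\ref{rem:Lambda_S flip}, share the same skeleton, the Koszul-type complex computing the $E_2$-page is built solely from the vertex data $1-p_i$ and is therefore identical for the two graphs; this already accounts for the bound in Theorem~\ref{thm:K-theory for A_S}. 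The crux, and the main obstacle, is that the higher differentials together with the extension problems assembling $E_\infty$ into $K_*$ depend \emph{a priori} on the factorization rule $\theta$ rather than only on the skeleton; for $\lvert S\rvert=2$ the spectral sequence leaves only an extension problem, resolved in Proposition~\ref{prop:A_S for |S|=2}, but for $\lvert S\rvert\ge 3$ genuine higher differentials can appear, and it is exactly this factorization dependence that makes $K$-theory of $k$-graphs vary in general. Proving the conjecture amounts to showing that these differentials and extensions for $\theta$ coincide with those for the flip $\sigma$, which is precisely the content of Conjecture~\ref{conj:k-graph} restricted to the present family. The most promising route is a symmetry argument exploiting that both $\theta_{p,q}$ and the flip are bijections of $\{0,\dots,p-1\}\times\{0,\dots,q-1\}$ satisfying the same associativity constraint across triples (the identity verified in Remark~\ref{rem:k-graph from S}), with the aim of showing that $\theta$ and $\sigma$ induce identical maps on the spectral sequence. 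Once the $\theta$-differentials beyond the skeleton level are shown to vanish and the relevant extensions to split as in the Künneth computation, matching $K_*(\CA_S)$ with $K_*\bigl(\bigotimes_{p\in S}\CO_p\bigr)$ and invoking Kirchberg--Phillips completes the argument, the infinite case following by continuity of $K$-theory.
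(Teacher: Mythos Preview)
The statement you are addressing is a \emph{conjecture}; the paper does not prove it and explicitly leaves it open. There is therefore no proof in the paper to compare your proposal against, only the paper's discussion of why the problem is difficult.

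Your reduction is correct and coincides with the paper's own framing. The equivalence between $\CA_S\cong\bigotimes_{p\in S}\CO_p$ and the displayed $K$-theoretic formula for $\QQ_S$ indeed follows from Theorem~\ref{thm:main result} together with Kirchberg--Phillips; your inductive K\"unneth computation of $K_*\bigl(\bigotimes_{p\in S}\CO_p\bigr)$ is accurate (the K\"unneth sequence splits unnaturally, so the extensions cause no trouble); and via Corollary~\ref{cor:tor subalgebra via k-graphs} and Remark~\ref{rem:Lambda_S flip} you correctly reduce the problem to comparing the spectral sequences for $\Lambda_{S,\theta}$ and $\Lambda_{S,\sigma}$, which share the same $E_1$- and $E_2$-pages by Lemma~\ref{lem:ker d^p / im d^(p-1)} and Proposition~\ref{prop:E for alpha(k)}.

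However, the proposal is not a proof. The entire content of the conjecture lies in the step you yourself label ``the crux, and the main obstacle'': controlling the higher differentials and the extension problems for $\theta$ when $\lvert S\rvert\ge 3$. You suggest ``a symmetry argument exploiting that both $\theta_{p,q}$ and the flip are bijections \ldots\ satisfying the same associativity constraint,'' but this is not carried out, and as stated it cannot work: \emph{every} admissible factorization rule on a fixed skeleton satisfies that associativity constraint, since it is precisely the defining condition for a $k$-graph. Any argument along these lines would therefore prove Conjecture~\ref{conj:k-graph} in full, not just for this family, and would need an ingredient genuinely specific to the single-vertex, purely infinite simple setting. Your final sentence (``Once the $\theta$-differentials beyond the skeleton level are shown to vanish and the relevant extensions to split \ldots'') is thus conditional on exactly the assertion to be proved. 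In short, you have correctly restated the open problem and its known reductions, but not advanced beyond what the paper already establishes in Theorem~\ref{thm:K-theory for A_S} and the surrounding discussion.
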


\begin{rem}\label{rem:stable relations}
It follows from Theorem~\ref{thm:main result} and Theorem~\ref{thm:K-theory for A_S}~(d) that the $K$-theory of $\QQ_S$ is finitely generated if and only if $S$ is finite. Consequently, when $S$ is finite the defining relations of $\QQ_S$ from Defintion~\ref{def:Q_S} are \emph{stable}, see \cite{enders1}*{Corollary~4.6} and \cite{loring1}*{Chapter~14}.
\end{rem}

For the proof of Theorem~\ref{thm:K-theory for A_S}, we will employ the isomorphism $\CA_S \cong M_{d^\infty} \rtimes_\alpha^e H$ and make use of a spectral sequence by Kasparov constructed in \cite{kasparov}*{6.10}. Let us briefly review the relevant ideas and refer to \cite{barlak15} for a detailed exposition. Given a $C^*$-dynamical system $(B,\beta,\Z^k)$, we can consider its \emph{mapping torus}
\[
\M_\beta(B):=\left\lbrace f\in C(\R^k,B)\ : \ \beta_z (f(x))=f(x+z)\ \text{for all}\ x\in \R^k,\ z\in \Z^k \right\rbrace.
\]
It is well-known that $K_*(\M_\beta(B))$ is isomorphic to $K_{*+k}(B\rtimes_\beta\Z^k)$, see e.g.\ \cite{barlak15}*{Section~1}. The mapping torus admits a finite cofiltration 
\begin{equation}
\label{cofiltrationMappingTorus}
\M_\beta(B)=F_k \stackrel{\pi_k}{\onto} F_{k-1} \stackrel{\pi_{k-1}}{\onto} \cdots \stackrel{\pi_1}\onto F_0=A \stackrel{\pi_0}{\onto} F_{-1}=0
\end{equation}
arising from the filtration of $\R^k$ by its skeletons
\[
\emptyset = X_{-1} \subset \Z^k = X_0 \subset X_1 \subset \cdots\subset X_k = \R^k,
\]
where $X_\ell := \{ (x_1,\ldots,x_k) \in \R^k : \lvert\{ 1 \leq i \leq k : x_i \in \R\setminus\Z\}\rvert \leq \ell\}$. 

As for filtrations of $C^*$-algebras by closed ideals \cite{schochet}, there is a standard way relying on Massey's technique of exact couples \cites{massey1,massey2} of associating a spectral sequence to a given finite cofiltration of a $C^*$-algebra. In this way, the cofiltration \eqref{cofiltrationMappingTorus} yields a spectral sequence $(E_\ell,d_\ell)_{\ell\geq 1}$ that converges to $K_*(\M_\beta(B))\cong K_{*+k}(B\rtimes_\beta \Z^k)$. Using Savinien-Bellissard's \cite{savinienBellissard} description of the $E_1$-term, we can summarize as follows.

\begin{thm}[cf.\ {\cite{kasparov}*{6.10}}, {\cite{savinienBellissard}*{Theorem~2}} and {\cite{barlak15}*{Corollary~2.5}}]\label{thm:spec seq for cr prod}~\newline
Let $(B,\beta,\Z^k)$ be a $C^*$-dynamical system. There exists a cohomological spectral sequence $(E_\ell,d_\ell)_{\ell\geq 1}$ converging to $K_*(\M_\beta(B))\cong K_{*+k}(B\rtimes_\beta \Z^k)$. The $E_1$-term is given by
\[
\begin{array}{l}
E_1^{p,q}:= K_q(B) \otimes_\Z \Lambda^p(\Z^k),\text{ with}\\
d_1^{p,q}\colon E_1^{p,q}\to E_1^{p+1,q}, \quad x\otimes e\mapsto \sum\limits_{j=1}^k (K_q(\beta_j)-\op{id})(x)\otimes (e_j\wedge e).
\end{array}
\]
Furthermore, the spectral sequence collapses at the $(k+1)$th page, so that $E_\infty = E_{k+1}$.
\end{thm}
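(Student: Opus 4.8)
The statement is, as the attribution indicates, a repackaging of Kasparov's construction \cite{kasparov}*{6.10} together with the $E_1$-identification of \cite{savinienBellissard}; a self-contained treatment is given in \cite{barlak15}. I nonetheless outline the argument. The isomorphism $K_*(\M_\beta(B))\cong K_{*+k}(B\rtimes_\beta\Z^k)$ is the classical mapping-torus computation, see \cite{barlak15}*{Section~1}, so it suffices to construct a spectral sequence abutting to $K_*(\M_\beta(B))$. For this I would use the cofiltration \eqref{cofiltrationMappingTorus}: $F_\ell$ is the quotient of $\M_\beta(B)$ by the closed two-sided ideal of elements vanishing on the skeleton $X_\ell$, and $\pi_\ell$ is restriction. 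The kernel $I_\ell:=\ker\pi_\ell$ is the algebra of $\beta$-equivariant $B$-valued functions supported on the open $\ell$-cells of $\R^k/\Z^k=\TT^k$. These cells are indexed by the subsets $J\subseteq\{1,\dots,k\}$ with $\lvert J\rvert=\ell$, each contributing a copy of $C_0(\R^\ell)\otimes B$, so $K_*(I_\ell)\cong\bigoplus_{\lvert J\rvert=\ell}K_{*-\ell}(B)$, which I identify with $K_{*-\ell}(B)\otimes_\Z\Lambda^\ell(\Z^k)$ by matching the $J$-summand with $K_{*-\ell}(B)\otimes e_J$, where $e_J=e_{j_1}\wedge\dots\wedge e_{j_\ell}$ for $J=\{j_1<\dots<j_\ell\}$.

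Next I would feed the six-term exact sequences of the extensions $0\to I_\ell\to F_\ell\stackrel{\pi_\ell}{\onto}F_{\ell-1}\to 0$ into Massey's exact-couple machinery \cites{massey1,massey2}, exactly as one does for filtrations of $C^*$-algebras by closed ideals, compare \cite{schochet}. The resulting derived couples produce the spectral sequence $(E_\ell,d_\ell)_{\ell\geq 1}$, and with the cohomological indexing that places $K_{p+q}(I_p)$ in bidegree $(p,q)$, the previous paragraph gives $E_1^{p,q}=K_{p+q}(I_p)\cong K_q(B)\otimes_\Z\Lambda^p(\Z^k)$.

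The heart of the proof — and the step I expect to be the main obstacle — is the identification of $d_1$. By construction $d_1^{p,q}$ is a connecting homomorphism of the exact couple relating the $K$-theory of $I_p$ to that of $I_{p+1}$, and evaluating it on a generator $x\otimes e_J$ comes down to understanding how the closure of the open $p$-cell labelled $J$ meets the $(p+1)$-cells: it lies in the boundary of exactly the cells labelled $J\cup\{j\}$ for $j\notin J$, and crossing that boundary in the $j$th coordinate direction introduces the automorphism $\beta_j$ through the equivariance condition defining $\M_\beta(B)$. Tracking the Connes--Thom boundary maps and the signs coming from the cellular orientations then yields
\[
d_1^{p,q}(x\otimes e)=\sum_{j=1}^k\bigl(K_q(\beta_j)-\op{id}\bigr)(x)\otimes(e_j\wedge e),
\]
the twisted Koszul differential on $K_*(B)\otimes\Lambda^*(\Z^k)$. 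This is precisely the content of \cite{savinienBellissard}*{Theorem~2}, which I would invoke; redoing it amounts to a careful bookkeeping of attaching maps and signs for the skeletal filtration of the torus.

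Convergence and collapse are then formal. The cofiltration \eqref{cofiltrationMappingTorus} has only the $k+1$ nonzero stages $F_0,\dots,F_k$, so it is finite and the spectral sequence converges to $K_*(\M_\beta(B))$. Moreover, since all $E_1$-terms are concentrated in the strip $0\leq p\leq k$, every differential $d_\ell\colon E_\ell^{p,q}\to E_\ell^{p+\ell,q-\ell+1}$ with $\ell\geq k+1$ has either its source or its target in a column outside this strip, hence vanishes; therefore $d_\ell=0$ for all $\ell\geq k+1$ and $E_\infty=E_{k+1}$.
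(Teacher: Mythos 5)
Your outline matches the paper's own treatment: the paper does not prove this theorem but derives it from the same ingredients you use --- the mapping-torus isomorphism, the cofiltration by skeleta of $\R^k$, Massey's exact couples as in \cite{schochet}, and the $E_1$-identification from \cite{savinienBellissard} --- deferring the detailed bookkeeping to \cite{barlak15}. Your additional remarks on the subquotients $I_\ell$, the Koszul form of $d_1$, and the formal collapse argument are all correct, so this is essentially the same (citation-based) approach.
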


By Bott periodicity, we have that $(E_\ell^{p,q+2},d_\ell^{p,q+2})=(E_\ell^{p,q},d_\ell^{p,q})$ for all $p,q \in \Z$. In particular, the $E_\infty$-term reduces to $E_\infty^{p,q}$ with $p\in \Z$ and $q=0,1$.

\begin{rem}
Let us recall the meaning of convergence of the spectral sequence $(E_\ell,d_\ell)_{\ell\geq 1}$. For $q=0,1$, consider the diagram
\[
K_q(\M_\gamma(B))=K_q(F_k) \longrightarrow K_q(F_{k-1}) \longrightarrow \cdots \longrightarrow K_q(F_0) \longrightarrow K_q(F_{-1})=0.
\]
Define $\F_p K_q(\M_\beta(B)):=\op{ker}(K_q(\M_\beta(B))\to K_q(F_p))$ for $p=-1,\ldots,k$, and observe that this gives rise to a filtration of abelian groups
\[
0 \into \F_{k-1} K_q(\M_\beta(B)) \into \cdots \into	\F_{-1} K_q(\M_\beta(B)) = K_q(\M_\beta(B)).
\]
One can now show the existence of exact sequences
\begin{equation}\label{eq:exact sequences from filtration}
0 \longrightarrow \F_p K_{p+q}(\M_\beta(B)) \longrightarrow \F_{p-1} K_{p+q}(\M_\beta(B)) \longrightarrow E_{\infty}^{p,q} \longrightarrow 0,
\end{equation}
or in other words, there are isomorphisms
\[
E_{\infty}^{p,q}\cong \F_{p-1} K_{p+q}(\M_\beta(B))/\F_p K_{p+q}(\M_\beta(B)).
\]
Hence, the $E_\infty$-term determines the $K$-theory of $\M_\beta(B)$, and thus of $B\rtimes_\beta \Z^k$, up to group extension problems.
\end{rem}

Let us now turn to the $K$-theory of $M_{d^\infty} \rtimes_\alpha^e H^+$. By Laca's dilation theorem \cite{Lac}, see also Remark~\ref{rem:dilations}, we may and will determine the $K$-theory of the dilated crossed product $M_{d^\infty,\infty} \rtimes_{\alpha_\infty} H$ instead. Fix a natural number $1 \leq k \leq \lvert S \rvert$ and observe that $H_k \cong \Z^k$.  Let $\alpha_\infty(k)$ be the $H_k$-action on $M_{d^\infty,\infty}$ induced by the $k$ smallest elements $p_1 < p_2 < \dotsb < p_k$ of $S$. It follows from the proof of Proposition~\ref{prop:UHF cross prod torsion K-th} that $K_0(\alpha_{\infty,p_\ell})$ is given by multiplication with $1/p_\ell$ on $K_0(M_{d^\infty}) \cong N$. It turns out to be more convenient to work with the action $\alpha_\infty^{-1}(k)$ given by the inverses of the $\alpha_\ell$, whose crossed product is canonically isomorphic to $M_{d^\infty,\infty} \rtimes_{\alpha_\infty(k)} H_k$.

Let $(E_\ell,d_\ell)_{\ell\geq 1}$ denote the spectral sequence associated with $\alpha^{-1}_\infty(k)$. As $K_1(M_{d^\infty})=0$, it follows directly from Theorem~\ref{thm:spec seq for cr prod} that $E_1^{p,1}=0$ for all $p \in \Z$. Moreover, according to Theorem~\ref{thm:spec seq for cr prod}, $d_1^{p,0}\colon N\otimes_\Z \Lambda^p(\Z^k)\to N\otimes_\Z \Lambda^p(\Z^k)$, $p \in \Z$, is given by
\[\begin{array}{c}
d_1^{p,0}(x\otimes e)=\sum_{\ell=1}^k (p_\ell -1)x\otimes e_\ell\wedge e=\sum_{\ell=1}^k x\otimes (p_\ell -1)e_\ell\wedge e. 
\end{array}\]
In other words, $d_1^{p,0} = \id_N\otimes h^p$ with
\begin{equation}\label{eq:differential formula}
\begin{array}{c}
h^p\colon \Lambda^p(\Z^k)\to \Lambda^{p+1}(\Z^k),\quad h^p(e)=\sum_{\ell=1}^k (p_\ell-1)e_\ell\wedge e.
\end{array}
\end{equation}
To obtain $E_2^{p,0}$, we therefore compute the cohomology of the complex $(\Lambda^p(\Z^k),h^p)_{p\in \Z}$. To do so, we consider $h^p$ as a matrix $A_p\in M_{\binom k {p+1} \times \binom k p}(\Z)$, where the identification is taken with respect to the canonical bases of $\Lambda^p(\Z^k)$ and $\Lambda^{p+1}(\Z^k)$ in lexicographical ordering. The computation then mainly reduces to determining the Smith normal form of $A_p$.

\begin{thm}[Smith normal form]\label{thm:Smith normal form}
Let $A$ be a non-zero $m\times n$-matrix over a principal ideal domain $R$. There is an invertible $m\times m$-matrix $S$ and an invertible $n\times n$-matrix $T$ over $R$, so that
\[D:=SAT=\op{diag}(\delta_1,\ldots,\delta_r,0,\ldots,0)\]
for some $r \leq \min(m,n)$ and non-zero $\delta_i\in R$ satisfying $\delta_i|\delta_{i+1}$ for $1 \leq i \leq r-1$. The elements $\delta_i$ are unique up to multiplication with some unit and are called \emph{elementary divisors} of $A$. The diagonal matrix $D$ is called a \emph{Smith normal form} of $A$. The $\delta_i$ can be computed as
\begin{equation}\label{eq:det divisor formula}
\begin{array}{c}
\delta_1 = d_1(A),\quad \delta_i = \frac{d_i(A)}{d_{i-1}(A)},
\end{array}
\end{equation}
where $d_i(A)$, called the \emph{$i$-th determinant divisor}, is the greatest common divisor of all $i\times i$-minors of $A$.
\end{thm}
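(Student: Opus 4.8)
The statement is the classical theorem on Smith normal form over a principal ideal domain, so the plan is to reproduce the standard two-step argument: an \emph{existence} part carried out by successive row and column reduction, and a \emph{uniqueness} part deduced from the invariance of the determinant divisors. Throughout I would call two matrices \emph{equivalent} when one is obtained from the other by left and right multiplication by invertible matrices over $R$, and note that row/column transpositions and the unimodular $2\times2$ substitutions used below are all of this type.

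For existence, the plan is to establish the reduction lemma that any nonzero $A$ is equivalent to $\bigl(\begin{smallmatrix}\delta_1 & 0\\ 0 & B\end{smallmatrix}\bigr)$ with $\delta_1\neq 0$ and $\delta_1$ dividing every entry of $B$, and then to iterate this on $B$, stopping once the remaining block is zero; this produces $D=\op{diag}(\delta_1,\dots,\delta_r,0,\dots,0)$, and the chain $\delta_i\mid\delta_{i+1}$ follows because row and column operations on $B$ only build $R$-linear combinations of its entries and hence preserve divisibility by $\delta_1$. To prove the lemma I would exploit that $R$ is Noetherian: among all matrices equivalent to $A$ with nonzero $(1,1)$-entry, pick one for which the principal ideal $(a_{11})$ is maximal, and write $a=a_{11}$. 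If $a\nmid a_{i1}$ for some $i$, set $d=\gcd(a,a_{i1})$ and write $d=xa+ya_{i1}$, $a=da'$, $a_{i1}=db'$, so that $xa'+yb'=1$; applying $\bigl(\begin{smallmatrix}x & y\\ -b' & a'\end{smallmatrix}\bigr)$ to rows $1$ and $i$ replaces $a$ by $d$ and strictly enlarges $(a)$, a contradiction. Hence $a$ divides the entire first column, and symmetrically the entire first row; clearing these entries by elementary operations yields $\bigl(\begin{smallmatrix}a & 0\\ 0 & B\end{smallmatrix}\bigr)$. Finally, if $a\nmid b_{ij}$ for some entry of $B$, adding the row of $B$ containing $b_{ij}$ to the first row gives an equivalent matrix whose first row is not divisible by its $(1,1)$-entry, once more contradicting maximality; so $\delta_1:=a$ does the job.

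For uniqueness together with the formula \eqref{eq:det divisor formula}, the key input will be the Cauchy--Binet formula: if $S,T$ are invertible over $R$, then every $i\times i$ minor of $SAT$ is an $R$-linear combination of $i\times i$ minors of $A$, and, applying this to $A=S^{-1}(SAT)T^{-1}$, conversely; hence the ideal generated by all $i\times i$ minors is an invariant of the equivalence class, so its generator $d_i(A)$ is well defined up to a unit. It then remains to compute $d_i(D)$ for $D=\op{diag}(\delta_1,\dots,\delta_r,0,\dots)$: since $D$ is diagonal the only potentially nonzero $i\times i$ minors are the products $\delta_{j_1}\cdots\delta_{j_i}$ with $j_1<\dots<j_i\le r$, and since $\delta_1\mid\dots\mid\delta_r$ the product $\delta_1\cdots\delta_i$ divides each of these and is itself one of them, whence $d_i(D)=\delta_1\cdots\delta_i$ up to a unit for $i\le r$ and $d_i(D)=0$ for $i>r$. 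Combining the two parts, $r$ is the largest index with $d_r(A)\neq 0$, $\delta_1$ equals $d_1(A)$, and $\delta_i$ equals $d_i(A)/d_{i-1}(A)$ up to a unit, which is both the stated formula and the uniqueness assertion.

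The one genuinely delicate point, and where I expect to spend the most care, is the existence step over a \emph{non-Euclidean} PID: there is no Euclidean norm against which to run a division algorithm, so the Noetherian ``maximal ideal $(a_{11})$'' device combined with the unimodular $2\times2$ substitution must do the work that repeated division would do in the Euclidean case. The rest is bookkeeping and the routine Cauchy--Binet computation.
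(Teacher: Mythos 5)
Your proof is correct: the existence step via the Noetherian maximality of $(a_{11})$ combined with the unimodular $2\times 2$ substitution, and the uniqueness step via Cauchy--Binet and the invariance of the determinantal ideals, together constitute the standard complete argument, and the computation $d_i(D)=\delta_1\cdots\delta_i$ closes the loop to the formula \eqref{eq:det divisor formula}. Note, however, that the paper states Theorem~\ref{thm:Smith normal form} without proof, as classical background used only to compute the cohomology of the complex $(\Lambda^p(\Z^k),h^p)$ in Lemma~\ref{lem:ker d^p / im d^(p-1)}, so there is no argument in the paper to compare yours against.
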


Of course, $D$ can only be a diagonal matrix if $m=n$. The notation in Theorem~\ref{thm:Smith normal form} is supposed to mean that $D$ is the $m\times n$ matrix over $R$ with the $\min(m,n)\times \min(m,n)$ left upper block matrix being $\op{diag}(\delta_1,\ldots,\delta_r,0,\ldots,0)$ and all other entries being zero.

For each $1\leq k\leq\lvert S\rvert$, set $g_k :=\gcd(\{p_\ell-1 : \ell=1,\cdots,k\})$.
\begin{lem}\label{lem:ker d^p / im d^(p-1)}
The group $\op{ker}(h^p)/\op{im}(h^{p-1})$ is isomorphic to $(\Z/g_k\Z)^{\binom{k-1}{p-1}}$ for $1 \leq p \leq k$ and vanishes otherwise. 
\end{lem}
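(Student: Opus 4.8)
The plan is to recognise that $h^p$ is exterior left-multiplication by the fixed vector $v:=\sum_{\ell=1}^k(p_\ell-1)e_\ell\in\Z^k$, so that $(\Lambda^{\bullet}(\Z^k),h^{\bullet})$ is a rescaled Koszul-type complex whose integral cohomology can be computed directly. Since $g_k$ is by definition the greatest common divisor of the coefficients of $v$, and $p_\ell\geq 2$ forces $g_k\geq 1$, I would first write $v=g_k v'$ with $v'\in\Z^k$ primitive, so that $h^p=g_k\,\partial^p$ where $\partial^p:=v'\wedge-\colon\Lambda^p(\Z^k)\to\Lambda^{p+1}(\Z^k)$.

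Next I would split the complex $(\Lambda^{\bullet}(\Z^k),\partial)$ over $\Z$. As $v'$ is primitive, it is a member of a $\Z$-basis $v',f_2,\dots,f_k$ of $\Z^k$ (apply Theorem~\ref{thm:Smith normal form} to $v'$ regarded as a single column). Setting $M:=\Z f_2\oplus\dots\oplus\Z f_k\cong\Z^{k-1}$ gives $\Z^k=\Z v'\oplus M$ and hence $\Lambda^p(\Z^k)=\Lambda^p(M)\oplus\bigl(v'\wedge\Lambda^{p-1}(M)\bigr)$ for every $p$. Writing $\omega=\beta+v'\wedge\alpha$ along this decomposition one computes $\partial^p(\omega)=v'\wedge\beta$; since $v'\wedge-$ is injective on $\Lambda^p(M)$, this yields $\ker(\partial^p)=v'\wedge\Lambda^{p-1}(M)$, and the same computation one degree down gives $\op{im}(\partial^{p-1})=v'\wedge\Lambda^{p-1}(M)$. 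In particular $(\Lambda^{\bullet}(\Z^k),\partial)$ is exact.

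It then remains to reinsert the scalar $g_k$ and identify the quotient. Because $\Lambda^{p+1}(\Z^k)$ is free abelian, $\ker(h^p)=\ker(g_k\partial^p)=\ker(\partial^p)$, whereas $\op{im}(h^{p-1})=g_k\,\op{im}(\partial^{p-1})=g_k\,\ker(\partial^p)$; hence $\ker(h^p)/\op{im}(h^{p-1})\cong\ker(\partial^p)/g_k\ker(\partial^p)$. For $1\leq p\leq k$ the map $\Lambda^{p-1}(M)\to\ker(\partial^p)$, $\alpha\mapsto v'\wedge\alpha$, is an isomorphism of abelian groups, so the quotient is isomorphic to $\Lambda^{p-1}(M)/g_k\Lambda^{p-1}(M)\cong(\Z/g_k\Z)^{\binom{k-1}{p-1}}$, using that the rank of $\Lambda^{p-1}(M)$ equals $\binom{k-1}{p-1}$. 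For $p<0$ or $p>k$ the relevant exterior powers are zero, and for $p=0$ one has $\ker(h^0)=0$ since $v'\neq 0$ and $\Lambda^1(\Z^k)$ is torsion free; thus the group vanishes in all remaining cases.

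I do not anticipate a serious obstacle; the only delicate points are integral. The crucial input is that a primitive vector of $\Z^k$ is part of a $\Z$-basis, which is what allows the complex to be split over $\Z$ rather than merely after tensoring with $\Q$; and one must take care to pull the factor $g_k$ out of the kernel (legitimate because the exterior powers are torsion free) while retaining it in the image. Finally I would sanity-check the endpoints against the exponent $\binom{k-1}{p-1}$: for $p=1$ one has $\ker(\partial^1)=\Z v'$, giving $(\Z/g_k\Z)^{\binom{k-1}{0}}$, and for $p=k$ one has $\ker(\partial^k)=\Lambda^k(\Z^k)$, giving $(\Z/g_k\Z)^{\binom{k-1}{k-1}}$, both consistent with the claim.
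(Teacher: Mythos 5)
Your argument is correct, and it takes a genuinely different and more structural route than the paper. You observe that $h^p$ is exterior multiplication by the single vector $v=\sum_{\ell}(p_\ell-1)e_\ell$, factor out the content $g_k$ to write $h^p=g_k\,(v'\wedge-)$ with $v'$ primitive, and then split $\Z^k=\Z v'\oplus M$ to see that the rescaled complex $(\Lambda^{\bullet}(\Z^k),v'\wedge-)$ is exact with $\op{ker}(v'\wedge-)=v'\wedge\Lambda^{p-1}(M)\cong\Z^{\binom{k-1}{p-1}}$; the cohomology of $h^{\bullet}$ is then just $\op{ker}(\partial^p)/g_k\op{ker}(\partial^p)$. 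All the delicate integral points you flag are handled correctly: a primitive vector is part of a $\Z$-basis, $\op{ker}(g_k\partial^p)=\op{ker}(\partial^p)$ by torsion-freeness of $\Lambda^{p+1}(\Z^k)$ and $g_k\geq 1$, and $\op{im}(g_k\partial^{p-1})=g_k\op{im}(\partial^{p-1})$. The paper instead works with the matrices $A_p$ of $h^p$ and their Smith normal forms: it exhibits a $\binom{k-1}{p}\times\binom{k-1}{p}$ diagonal submatrix with entries $\pm(p_\ell-1)$ to get $d_j(A_p)\mid g_k^j$, uses Laplace expansion and induction for the reverse divisibility, pins down the ranks via $h^{p+1}\circ h^p=0$, and then transports kernels and images through the change-of-basis matrices $S_{p-1},T_p$. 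Your proof is shorter and conceptually cleaner (it identifies the complex as a rescaled Koszul complex and makes its exactness before rescaling transparent), whereas the paper's computation stays closer to the explicit matrix data and the Smith normal form machinery it sets up anyway; both yield the same answer, and your version arguably explains \emph{why} the exponent is $\binom{k-1}{p-1}$, namely the rank of $\Lambda^{p-1}(M)$ for the rank-$(k-1)$ complement $M$ of $\Z v'$.
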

\begin{proof}
For $p\in \Z$, let $D_p=S_pA_pT_p$ denote the Smith normal form of $A_p$ with elementary divisors $\delta^{(p)}_1,\ldots,\delta^{(p)}_{r_p}$. As $\Lambda^p(\Z^k) = 0$ unless $0 \leq p\leq k$, $\op{ker}(h^p)/\op{im}(h^{p-1})$ vanishes if $p < 0$ or $p \geq k+1$.

If $p=0$, then $h^0\colon \Z \to \Z^k$ is given by $A_0 = (p_1-1,\ldots,p_k-1)$. Thus we have $r_0=1$ and $\delta^{(0)}_1=g_k$. Moreover, $h^0$ is injective, so $\op{ker}(h^0)/\op{im}(h^{-1})=0$.

Likewise, $p=k$ is simple as $h^{k-1}\colon \Z^k \to \Z$ is given by $A_{k-1} = (p_1-1,\ldots,p_k-1)^t$ and $h^k$ is zero because $\Lambda^{k+1}(\Z^k) = 0$. Therefore, $r_{k-1}=1$ and $\delta^{(k-1)}_1=g_k$, and hence 
\[
\op{ker}(h^k)/\op{im}(h^{k-1}) = \Z/g_k\Z=(\Z/g_k\Z)^{\binom{k-1}{k-1}}.
\]
As this completes the proof for $p\leq0$ and $p\geq k$, we will assume $1\leq p\leq k-1$ from now on. 

We start by showing that for $\ell=1,\cdots,k$, the matrix $A_p$ contains a $\binom{k-1}{p}\times \binom{k-1}p$ diagonal matrix with entries $\pm (p_\ell-1)$ (obtained by deleting suitable rows and columns). This will allow us to conclude that $A_p$ has a $j \times j$-minor equal to $\pm(p_\ell-1)^j$ for each $j=1,\ldots,\binom{k-1}p$. Thus we obtain that $d_j(A_p)$ divides $g_k^j$ for $j=1,\ldots,\binom{k-1}{p}$: 

First, keep only those columns of $A_p$ which correspond to basis elements $e_{i_1}\wedge\ldots\wedge e_{i_p} \in \Lambda^p(\Z^k)$ satisfying $\ell \neq i_j$ for all $j=1,\ldots,p$. As this amounts to choosing $p$ elements out of $k-1$ without order and repetition, we are left with $\binom{k-1}p$ columns (out of $\binom k p$). Next, we restrict to those rows which correspond to basis elements $e_{i_1}\wedge\ldots\wedge e_{i_{p+1}} \in \Lambda^{p+1}(\Z^k)$ satisfying $\ell=i_j$ for some (necessarily unique) $j=1,\ldots,p-1$. Here again $\binom{k-1}p$ rows (out of $\binom k {p+1}$) remain. The resulting matrix describes the linear map 
\[
\Lambda^p(\Z^k) \supset \Z^{\binom{k-1}p}\to \Z^{\binom{k-1}p} \subset \Lambda^{p+1}(\Z^k),\ e_{i_1}\wedge\ldots\wedge e_{i_p}\mapsto (p_\ell-1)\cdot e_\ell\wedge e_{i_1}\wedge\ldots\wedge e_{i_p},
\]
which is nothing but a diagonal matrix of size $\binom{k-1}p$ with entries $\pm(p_\ell-1)$. As explained above, we thus obtain that $d_j(A_p)|g_k^j$ for $j=1,\ldots,\binom{k-1}{p}$.

We will now show that the converse holds as well, i.e.\ $g_k^j|d_j(A_p)$ for $j=1,\ldots,\binom{k-1}{p}$. Note that every $1\times 1$-minor is either zero or $p_\ell-1$ for some $\ell=1,\ldots,k$. This shows that $d_1(A_p)=g_k$ for $1 \leq p \leq k-1$. Let $1\leq j\leq \binom{k-1}{p}-1$ and assume that $d_j(A_p)=g_k^j$. Let $L$ be any $(j+1)\times (j+1)$-matrix arising from $A_p$ by deleting sums and rows. By the Laplace expansion theorem, the determinant of $L$ is given as a linear combination of some of its $j\times j$-minors. The coefficients in the linear combination all are entries of $L$. The occurring minors are all $j\times j$-minors of $A_p$. Hence, $g_k^j|\op{det}(L)$ by assumption. In fact, we have $g_k^{j+1}|\op{det}(L)$ because all entries in $A_p$ are divisible by $g_k$. Altogether, $d_j(A_p)=g_k^j$ for $j=1,\ldots,\binom{k-1}{p}$ and we have shown that for $p=1,\ldots,k-1$, $r_p\geq \binom{k-1}{p}$ and $\delta^{(p)}_j=g_k$ for $j=1,\ldots,\binom{k-1}p$.

Since $A_p$ and $D_p$ have isomorphic kernel and image, our considerations show that
\[\begin{array}{lcr}
\binom{k-1}p \leq \op{rank}(\op{im}(h^p)) &\quad\text{ and }\quad& 
\op{rank}(\op{ker}(h^p)) \leq \binom k p -\binom{k-1}p=\binom{k-1}{p-1}.
\end{array}\]
By $h^{p+1}\circ h^p=0$, we conclude that $\op{rank}(\op{ker}(h^{p+1})) = \op{rank}(\op{im}(h^p)) = \binom{k-1}p$ which implies $r_p=\binom{k-1}p$. Moreover, $h^{p}\circ h^{p-1}=0$ forces $T_p^{-1}S_{p-1}^{-1}(\op{im}(D_{p-1}))\subset\op{ker}(D_p)$ or, equivalently, $\op{im}(D_{p-1})\subset\op{ker}(D_pT_p^{-1}S_{p-1}^{-1})$. Since 
\[\op{im}(D_{p-1})=g_k\Z^{\binom{k-1} {p-1}}\oplus\{0\}^{\binom k p - \binom{k-1} {p-1}}\]
has the same rank as $\op{ker}(D_pT_p^{-1}S_{p-1}^{-1})$,
it means that
\[\op{ker}(D_pT_p^{-1}S_{p-1}^{-1})=\Z^{\binom{k-1}{p-1}}\oplus\{0\}^{\binom k p -\binom{k-1}{p-1}}.\]
Moreover, $S_{p-1}$ is an automorphism of $\Z^{\binom k p}$ that restricts both to an isomorphism $\op{ker}(A_p)  \stackrel{\cong}{\longrightarrow} \op{ker}(D_pT_p^{-1}S_{p-1}^{-1})$ and to an isomorphism $\op{im}(A_{p-1}) \stackrel{\cong}{\longrightarrow} \op{im}(D_{p-1})$.
Hence,
\[\begin{array}{lclcl}
\op{ker}(h^p) / \op{im}(h^{p-1}) &=& \op{ker}(A_p) / \op{im}(A_{p-1}) & \cong & \op{ker}(D_p T_p^{-1}S_{p-1}^{-1}) / \op{im}(D_{p-1}) \vspace*{2mm}\\ 
&&&\cong& (\Z/g_k\Z)^{\binom{k-1}{p-1}}.
\end{array}\]
\end{proof}

Lemma~\ref{lem:ker d^p / im d^(p-1)} now allows us to compute the $E_2$-term of the spectral sequence associated to $\alpha_{\infty}^{-1}(k)\colon H_k\curvearrowright M_{d^\infty,\infty}$ by appealing to the following simple, but useful observation.

\begin{lem}\label{lem:N/gN}
The group $N/g_SN$ is isomorphic to $\Z/g_S\Z$. Moreover, for every $1\leq k\leq \lvert S\rvert$, the group $N/g_kN$ is isomorphic to a subgroup of $\Z/g_k\Z$.
\end{lem}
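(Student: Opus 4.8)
The plan is to use the explicit description $N=\Z\bigl[\{\tfrac1p:p\in\PS\}\bigr]$, i.e.\ that $N$ is the localization $\Z_T$ of $\Z$ at the multiplicative monoid $T\subset\N^\times$ generated by $\PS$. Since $\Z_T$ is a flat $\Z$-module, for every $g\in\N^\times$ the quotient $N/gN$ is canonically the localization $(\Z/g\Z)_T$; concretely I would study the natural ring homomorphism $q_g\colon\Z\to N/gN$ and show it is surjective with kernel an explicit ideal of $\Z$.

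For the general computation, let $g'$ be the largest divisor of $g$ all of whose prime factors lie in $\PS$, and put $g''=g/g'$, so that $\gcd(g',g'')=1$ and $g''$ is coprime to every element of $T$. First I would show $\ker q_g=g''\Z$: an integer $n$ lies in $gN$ if and only if $g\mid nt$ for some $t\in T$, and since $t$ may be taken divisible by $g'$ while remaining coprime to $g''$, this holds precisely when $g''\mid n$. For surjectivity, any class in $N/gN$ has the form $[a/t]=[a]\,[t]^{-1}$ with $t\in T$; choosing $m\in\Z$ with $tm\equiv1\pmod{g''}$ gives $tm-1\in g''\Z=\ker q_g$, hence $[t]^{-1}=[m]$ already lies in the image of $q_g$. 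Therefore $N/gN\cong\Z/g''\Z$.

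It then remains to identify $g''$ in the two cases. For $g=g_S$ I would invoke the coprimality built into $S$: if a prime $\ell$ divides $g_S=\gcd(\{p-1:p\in S\})$ and also lies in $\PS$, then $\ell\mid p_0$ for some $p_0\in S$ while $\ell\mid p_0-1$, forcing $\ell\mid1$ --- absurd. Hence no prime factor of $g_S$ lies in $\PS$, so $g_S''=g_S$ and $N/g_SN\cong\Z/g_S\Z$. For $g=g_k$ the same argument only controls the primes $p_1,\dots,p_k$, so in general $g_k''$ is merely a divisor of $g_k$; since $g_k''\mid g_k$, multiplication by $g_k/g_k''$ embeds $\Z/g_k''\Z$ into $\Z/g_k\Z$, exhibiting $N/g_kN\cong\Z/g_k''\Z$ as a subgroup of $\Z/g_k\Z$.

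The only step carrying any content is the kernel-and-surjectivity computation for $q_g$ --- the principle that localizing at $T$ annihilates exactly the $\PS$-supported part of $g$; everything afterwards is elementary bookkeeping with prime factorizations. (For $g_S$ one can even shortcut this: since $\gcd(g_S,d)=1$, every element of $T$ is already invertible modulo $g_S$, so localizing $\Z/g_S\Z$ at $T$ changes nothing.)
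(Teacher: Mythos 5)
Your proof is correct and follows essentially the same route as the paper: the key arithmetic input in both is that every denominator $r$ with $\frac{1}{r}\in N$ is coprime to $g_S$ (because $p\in \PS$ divides some $q\in S$ while $g_S\mid q-1$), and for general $g_k$ both arguments strip off the $\PS$-supported divisor of $g_k$ before inverting denominators --- your $g_k''$ is exactly the paper's $g_k'$. The only cosmetic difference is that you analyze the kernel and surjectivity of the natural map $\Z\to N/g N$, whereas the paper writes down the inverse map $N/gN\to\Z/g\Z$, $\frac{1}{r}\mapsto r^{-1}\bmod g$, directly.
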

\begin{proof}
Recall that $S$ consists of relatively prime numbers, $N=\Z\bigl[\{\frac{1}{p}:p\in S\}\bigr]$ and let us simply write $g$ for $g_S=\gcd(\{p-1:p\in S\})$. The map
\[\begin{array}{c} N/gN \to \Z/g\Z, \quad \frac{1}{r}+gN \mapsto s + g\Z, \end{array}\]
where $r$ is a natural number and $s$ is the unique solution in $\{0,1,\dotsc,g-1\}$ of $rs=1 \pmod{g}$, defines a group homomorphism. To see this, note first that for every $p\in \PS$ there is a $q\in S$ such that $p|q$, i.e.\ $\gcd{(q-1,p)}=1$. Therefore, $\gcd{(g,p)}=1$ for all $p\in \PS$. If $\frac{1}{r}\in N$, then all the prime factors of $r$ come from $\PS$, and it follows that $\gcd{(g,r)}=1$. Thus, the above map is well-defined and extends by addition to the whole domain. Moreover, every $s$ appearing as a solution is relatively prime with $g$, meaning that the kernel is $gN$, i.e.\ the map is injective. Finally, the inverse map is given by $1+g\Z\mapsto 1+gN$.

For the second part, set $g_k'=g_k/\max{(\gcd{(g_k,r)})}$, where the maximum is taken over all natural numbers $r$ such that $\frac{1}{r}\in N$, i.e.\ $g_k'$ is the largest number dividing $g_k$ so that $\gcd{(g_k',r)}=1$ for all such $r$. Then $g_k'N=g_kN$ and a similar proof as above shows that $N/g_k N=N/g_k'N\cong\Z/g_k'\Z$.
\end{proof}

\begin{prop}\label{prop:E for alpha(k)} 
For every $1 \leq k \leq \lvert S \rvert$, the respective group $E_2^{p,0}$ is isomorphic to a subgroup of $(\Z/g_k\Z)^{\binom{k-1}{p-1}}$ for $1 \leq p \leq k$, and vanishes otherwise. $E_2^{p,1}$ vanishes for $p \in \Z$.
\end{prop}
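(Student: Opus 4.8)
The plan is to compute $E_2$ directly from the explicit description of the $E_1$-page and its differential established just before the statement, reducing everything to a flat base change along $\Z \hookrightarrow N$ combined with Lemma~\ref{lem:ker d^p / im d^(p-1)} and Lemma~\ref{lem:N/gN}.

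First I would dispose of the $q = 1$ row. Since $K_1(M_{d^\infty}) = 0$, Theorem~\ref{thm:spec seq for cr prod} gives $E_1^{p,1} = K_1(M_{d^\infty}) \otimes_\Z \Lambda^p(\Z^k) = 0$ for every $p \in \Z$, and as $E_2^{p,1}$ is a subquotient of $E_1^{p,1}$ (namely $\op{ker}(d_1^{p,1})/\op{im}(d_1^{p-1,1})$), it vanishes as well.

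For the $q = 0$ row, recall that $E_1^{p,0} \cong N \otimes_\Z \Lambda^p(\Z^k)$ with $d_1^{p,0} = \id_N \otimes h^p$, so that $E_2^{p,0}$ is the $p$-th cohomology of the cochain complex $N \otimes_\Z (\Lambda^\bullet(\Z^k), h^\bullet)$. The key observation is that $N = \Z[\{1/p : p \in S\}]$ is a localization of $\Z$, hence flat over $\Z$ (equivalently, it is torsion free over the principal ideal domain $\Z$), so the functor $N \otimes_\Z -$ is exact and therefore commutes with taking cohomology. Concretely, applying it to the short exact sequences $0 \to \op{im}(h^{p-1}) \to \op{ker}(h^p) \to \op{ker}(h^p)/\op{im}(h^{p-1}) \to 0$ and $0 \to \op{ker}(h^p) \to \Lambda^p(\Z^k) \to \op{im}(h^p) \to 0$ yields $E_2^{p,0} \cong N \otimes_\Z \bigl(\op{ker}(h^p)/\op{im}(h^{p-1})\bigr)$. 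By Lemma~\ref{lem:ker d^p / im d^(p-1)} the right-hand side is $(N/g_kN)^{\binom{k-1}{p-1}}$ for $1 \leq p \leq k$ and $0$ otherwise; by Lemma~\ref{lem:N/gN} the group $N/g_kN$ is isomorphic to a subgroup of $\Z/g_k\Z$. Combining these identifications gives the asserted description of $E_2^{p,0}$, completing the proof.

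There is no serious obstacle here; the only point needing a moment of care is the flat base-change step $H^p(N \otimes_\Z C^\bullet) \cong N \otimes_\Z H^p(C^\bullet)$, which one either spells out via the two short exact sequences above or simply invokes as the universal coefficient theorem. The remaining bookkeeping — for instance that $\Z/g'\Z$ with $g' \mid g_k$ embeds into $\Z/g_k\Z$ via multiplication by $g_k/g'$, which is precisely what ``isomorphic to a subgroup'' amounts to in this situation — is routine.
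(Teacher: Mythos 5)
Your proof is correct and follows essentially the same route as the paper: the vanishing of the $q=1$ row from $K_1(M_{d^\infty})=0$, and for the $q=0$ row the flatness of $N$ over $\Z$ to pull $N\otimes_\Z-$ through the cohomology of $(\Lambda^\bullet(\Z^k),h^\bullet)$, followed by Lemma~\ref{lem:ker d^p / im d^(p-1)} and Lemma~\ref{lem:N/gN}. The only difference is that you spell out the flat base-change step via the two short exact sequences, which the paper simply invokes.
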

\begin{proof}
Note that $N$ is torsion free and hence a flat module over $\Z$. Thus, an application of Lemma~\ref{lem:ker d^p / im d^(p-1)} yields
\[\begin{array}{lclclcl}
E_2^{p,0} &\hspace*{-1mm}=\hspace*{-1mm}& \op{ker}(\id_{N}\otimes h^p)/\op{im}(\id_{N}\otimes h^{p-1}) &\hspace*{-1mm}\cong\hspace*{-1mm}& N\otimes_\Z \op{ker}(h^p)/\op{im}(h^{p-1}) \vspace*{2mm}\\
&\hspace*{-1mm}\cong\hspace*{-1mm}& N \otimes_\Z (\Z/g_k\Z)^{\binom {k-1}{p-1}} &\hspace*{-1mm}\cong\hspace*{-1mm}& (N \otimes_\Z \Z/g_k\Z)^{\binom {k-1}{p-1}} &\hspace*{-1mm}\cong\hspace*{-1mm}& (N/g_kN)^{\binom {k-1}{p-1}}
\end{array}\]
and Lemma~\ref{lem:N/gN} shows that $N/g_kN$ is isomorphic to a subgroup of $\Z/g_k\Z$. The second claim follows from the input data.
\end{proof}

\begin{rem}\label{rem:apps for E2term lemma}
Assume that $g_k=1$ for some $1 \leq k \leq \lvert S \rvert$, $k < \infty$, and let $k \leq \ell \leq \lvert S \rvert$ be a natural number. If $(E_i^{p,q})_{i \geq 1}$ denotes the spectral sequence associated with $\alpha_\infty^{-1}(\ell)$, then Proposition~\ref{prop:E for alpha(k)} yields $E_2^{p,0}=0$  for all $p \in \Z$.
\end{rem}

\begin{proof}[Proof of Theorem~\ref{thm:K-theory for A_S}]
Let $k \geq 1$ be finite with $k \leq \lvert S \rvert$. The main idea is to use the $E_\infty$-term of the spectral sequence associated with $\alpha^{-1}_\infty(k)$ to compute $K_*(M_{d^\infty} \rtimes^e_{\alpha(k)} H^+_k)$, up to certain group by employing convergence of this spectral sequence, see Theorem~\ref{thm:spec seq for cr prod}. Recall the general form \eqref{eq:exact sequences from filtration} of the extension problems involved. Since $\F_k K_q(\M_{\alpha_\infty}(M_{d^\infty,\infty})) =0$ and hence $\F_{k-1} K_q(\M_{\alpha_\infty}(M_{d^\infty,\infty})) \cong  E_{\infty}^{k,q-k}$, we face $k$ iterative extensions of the form: 
\begin{equation}\label{eq:iterative extensions}
\begin{array}{ccccc}
E_{\infty}^{k,q-k} &\hspace*{-2mm}\into\hspace*{-2mm}& \F_{k-2} K_q(\M_{\alpha_\infty}(M_{d^\infty,\infty})) &\hspace*{-2mm}\onto\hspace*{-2mm}& E_{\infty}^{k-1,q-k+1} \vspace*{2mm}\\
\F_{k-2} K_q(\M_{\alpha_\infty}(M_{d^\infty,\infty})) &\hspace*{-2mm}\into\hspace*{-2mm}& \F_{k-3} K_q(\M_{\alpha_\infty}(M_{d^\infty,\infty})) &\hspace*{-2mm}\onto\hspace*{-2mm}& E_{\infty}^{k-2,q-k+2} \vspace*{0mm}\\
\vdots&&\vdots&&\vdots \vspace*{0mm}\\
\F_{0} K_{q}(\M_{\alpha_\infty}(M_{d^\infty,\infty})) &\hspace*{-2mm}\into\hspace*{-2mm}& \F_{-1} K_{q}(\M_{\alpha_\infty}(M_{d^\infty,\infty})) &\hspace*{-2mm}\onto\hspace*{-2mm}& E_{\infty}^{0,q}.
\end{array}
\end{equation}
Using
\[\begin{array}{lcl}
\F_{-1} K_{q}(\M_{\alpha_\infty}(M_{d^\infty,\infty})) &=& K_{q}(\M_{\alpha_\infty}(M_{d^\infty,\infty})) \\ 
&\cong& K_{k+q}(M_{d^\infty,\infty} \rtimes_{\alpha_\infty(k)} H_k) \cong K_{k+q}(M_{d^\infty} \rtimes^e_{\alpha(k)} H^+_k),
\end{array}\]
we will thus arrive at $K_{k+q}(M_{d^\infty} \rtimes^e_{\alpha(k)} H^+_k)$, see Theorem~\ref{thm:spec seq for cr prod}. Recall that by Bott periodicity, $E_\infty^{p,q+2} \cong E_\infty^{p,q}$ for all $q \in \Z$. In addition, we know from Proposition~\ref{prop:E for alpha(k)} that for $p \in \Z$, the group $E_\infty^{p,1}$ is trivial, and $E_\infty^{p,0}$ vanishes unless $1 \leq p \leq k$, in which case it is a subquotient of $E_2^{p,0}$, and thus a subquotient of $(\Z/g_k\Z)^{\binom {k-1}{p-1}}$.

Assume now that $g=1$. Clearly, this holds exactly if $g_k = 1$ for some $k \geq 1$. For such $k \geq 1$, the corresponding $E_\infty$-term is trivial, yielding $K_*(M_{d^\infty} \rtimes^e_{\alpha(k)} H^+_k) = 0$. Using continuity of $K$-theory if necessary, we obtain that $\CA_S \cong M_{d^\infty} \rtimes^e_{\alpha} H^+$ has trivial $K$-theory. It follows from Kirchberg-Phillips classification that $\CA_S \cong \CO_2$. This proves (a).

If $S = \{p\}$, $\CA_S \cong \CO_p$ by the definition of $\CA_S$, and (b) follows. 

Claim (c) is nothing but Proposition~\ref{prop:A_S for |S|=2}.

Lastly, let us prove claim (d). Let $k \geq 2$, and denote by $(E_\ell,d_\ell)_{\ell \geq 1}$ the spectral sequence associated with $\alpha^{-1}_\infty(k)$. Recall that only those $E_\infty^{\ell,q-\ell}$ with $1 \leq \ell \leq k$ and $q-\ell \in 2\Z$ may be non-trivial subgroups of $(\Z/g_k\Z)^{\binom{k-1}{\ell-1}}$. Keeping track of the indices, we get
\[\begin{array}{c}
\sum\limits_{\substack{1 \leq \ell \leq k:\\ \ell \text{ even}}} \binom {k-1} {\ell-1} =2^{k-2}  = \sum\limits_{\substack{1 \leq \ell \leq k:\\ \ell \text{ odd}}} \binom {k-1} {\ell-1}.
\end{array}\]
This allows us to conclude that every element in $K_i(M_{d^\infty} \rtimes^e_{\alpha(k)} H^+_k)$ is a divisor of $g_k^{2^{k-2}}$. This concludes the proof, as $g_S = g_k$ for $k \leq \lvert S\rvert$ sufficiently large.
\end{proof}

\begin{rem}\label{rem:K-theory for M_d^infty}
It is possible to say something about the case $\lvert S \rvert=3$, though the answer is incomplete: Noting that $E_{\infty}^{2,-2} $ is a subgroup of $(\Z/g\Z)^2$, $E_{\infty}^{3,-2}$ and  $E_{\infty}^{1,0}$ are subgroups of $\Z/g\Z$, and the remaining terms vanish, we know that $K_1(M_{d^\infty} \rtimes^e_\alpha H^+) \cong E_\infty^{2,-2}$ and $K_0(M_{d^\infty} \rtimes^e_\alpha H^+)$ fits into an exact sequence
\[E_\infty^{3,-2} \into K_0(M_{d^\infty} \rtimes^e_\alpha H^+) \onto E_\infty^{1,0}.\]
But we cannot say more without additional information here.
\end{rem}

\begin{rem}
By considering $\CA_S$ as the $k$-graph $C^*$-algebra $C^*(\Lambda_{S,\theta})$ for finite $S$, see Corollary~\ref{cor:tor subalgebra via k-graphs}, one could probably also apply Evans' spectral sequence \cite{Evans}*{Theorem~3.15} to obtain Theorem~\ref{thm:K-theory for A_S} by performing basically the same proof. In fact, Evans' spectral sequence is the homological counterpart of the spectral sequence used here.
\end{rem}

\section*{References}
\begin{biblist}

\bib{barlak15}{article}{
  author={Barlak, Sel\c {c}uk},
  title={On the spectral sequence associated with the Baum-Connes Conjecture for $\mathbb Z^n$},
  note={\href {http://arxiv.org/abs/1504.03298}{arxiv:1504.03298v2}},
}

\bib{BarSza1}{article}{
  label={BaSz},
  author={Barlak, Sel\c {c}uk},
  author={Szab\'{o}, G\'{a}bor},
  title={Sequentially split $*$-homomorphisms between $C^*$-algebras},
  note={\href {http://arxiv.org/abs/1510.04555}{arxiv:1510.04555v2}},
}

\bib{BD}{article}{
  author={Bunce, John W.},
  author={Deddens, James A.},
  title={A family of simple $C^*$-algebras related to weighted shift operators},
  journal={J. Funct. Anal.},
  volume={19},
  year={1975},
  pages={13--24},
}

\bib{BaHLR}{article}{
  author={Brownlowe, Nathan},
  author={an Huef, Astrid},
  author={Laca, Marcelo},
  author={Raeburn, Iain},
  title={Boundary quotients of the {T}oeplitz algebra of the affine semigroup over the natural numbers},
  journal={Ergodic Theory Dynam. Systems},
  volume={32},
  year={2012},
  number={1},
  pages={35--62},
  issn={0143-3857},
  doi={\href {http://dx.doi.org/10.1017/S0143385710000830}{10.1017/S0143385710000830}},
}

\bib{BLS1}{article}{
  author={Brownlowe, Nathan},
  author={Larsen, Nadia S.},
  author={Stammeier, Nicolai},
  title={On $C^*$-algebras associated to right LCM semigroups},
  journal={Trans. Amer. Math. Soc.},
  year={2016},
  doi={\href {http://dx.doi.org/10.1090/tran/6638}{10.1090/tran/6638}},
}

\bib{BLS2}{article}{
  author={Brownlowe, Nathan},
  author={Larsen, Nadia S.},
  author={Stammeier, Nicolai},
  title={$C^*$-algebras of algebraic dynamical systems and right LCM semigroups},
  note={\href {http://arxiv.org/abs/1503.01599}{arxiv:1503.01599v1}},
}

\bib{BRRW}{article}{
  author={Brownlowe, Nathan},
  author={Ramagge, Jacqui},
  author={Robertson, David},
  author={Whittaker, Michael F.},
  title={Zappa-{S}z\'{e}p products of semigroups and their $C^*$-algebras},
  journal={J. Funct. Anal.},
  volume={266},
  year={2014},
  number={6},
  pages={3937--3967},
  issn={0022-1236},
  doi={\href {http://dx.doi.org/10.1016/j.jfa.2013.12.025}{10.1016/j.jfa.2013.12.025}},
}

\bib{bsBQforADS}{article}{
  label={BrSt},
  author={Brownlowe, Nathan},
  author={Stammeier, Nicolai},
  title={The boundary quotient for algebraic dynamical systems},
  journal={J. Math. Anal. Appl.},
  volume={438},
  year={2016},
  number={2},
  pages={772--789},
  doi={\href {http://dx.doi.org/10.1016/j.jmaa.2016.02.015}{10.1016/j.jmaa.2016.02.015}},
}

\bib{Com}{article}{
  author={Combes, Fran\c {c}ois},
  title={Crossed products and {M}orita equivalence},
  journal={Proc. Lond. Math. Soc. (3)},
  volume={49},
  year={1984},
  number={2},
  pages={289--306},
  issn={0024-6115},
  doi={\href {http://dx.doi.org/10.1112/plms/s3-49.2.289}{10.1112/plms/s3-49.2.289}},
}

\bib{CrispLaca}{article}{
  author={Crisp, John},
  author={Laca, Marcelo},
  title={Boundary quotients and ideals of {T}oeplitz {$C^*$}-algebras of {A}rtin groups},
  journal={J. Funct. Anal.},
  volume={242},
  year={2007},
  number={1},
  pages={127--156},
  issn={0022-1236},
  doi={\href {http://dx.doi.org/10.1016/j.jfa.2006.08.001}{10.1016/j.jfa.2006.08.001}},
}

\bib{CunPV}{article}{
  author={Cuntz, Joachim},
  title={A class of $C^*$-algebras and topological Markov chains II. Reducible chains and the Ext-functor for $C^*$-algebras},
  journal={Invent. Math.},
  year={1981},
  pages={25--40},
  volume={63},
  doi={\href {http://dx.doi.org/10.1007/BF01389192}{10.1007/BF01389192}},
}

\bib{CuntzQ}{article}{
  author={Cuntz, Joachim},
  title={$C^*$-algebras associated with the $ax+b$-semigroup over $\mathbb {N}$},
  conference={ title={$K$-theory and noncommutative geometry}, },
  book={ series={EMS Ser. Congr. Rep.}, publisher={Eur. Math. Soc., Z\"urich}, },
  date={2008},
  pages={201--215},
  doi={\href {http://dx.doi.org/10.4171/060-1/8}{10.4171/060-1/8}},
}

\bib{CLintegral2}{article}{
  author={Cuntz, Joachim},
  author={Li, Xin},
  title={{$C^*$}-algebras associated with integral domains and crossed products by actions on adele spaces},
  journal={J. Noncommut. Geom.},
  volume={5},
  year={2011},
  number={1},
  pages={1--37},
  issn={1661-6952},
  doi={\href {http://dx.doi.org/10.4171/JNCG/68}{10.4171/JNCG/68}},
}

\bib{CuntzVershik}{article}{
  author={Cuntz, Joachim},
  author={Vershik, Anatoly},
  title={{$C^*$}-algebras associated with endomorphisms and polymorphisms of compact abelian groups},
  year={2013},
  issn={0010-3616},
  journal={Comm. Math. Phys.},
  volume={321},
  number={1},
  doi={\href {http://dx.doi.org/10.1007/s00220-012-1647-0}{10.1007/s00220-012-1647-0}},
  publisher={Springer-Verlag},
  pages={157-179},
}

\bib{enders1}{article}{
  author={Enders, Dominic},
  title={Semiprojectivity for Kirchberg algebras},
  note={\href {http://arxiv.org/abs/1507.06091}{arxiv:1507.06091v1}},
}

\bib{Evans}{article}{
  author={Evans, D. Gwion},
  title={On the {$K$}-theory of higher rank graph {$C\sp *$}-algebras},
  journal={New York J. Math.},
  volume={14},
  year={2008},
  pages={1--31},
  issn={1076-9803},
  note={\href {http://nyjm.albany.edu:8000/j/2008/14_1.html}{nyjm:8000/j/2008/14\textunderscore 1}},
}

\bib{FS}{article}{
  author={Fowler, Neal J.},
  author={Sims, Aidan},
  title={Product systems over right-angled {A}rtin semigroups},
  journal={Trans. Amer. Math. Soc.},
  volume={354},
  year={2002},
  number={4},
  pages={1487--1509},
  issn={0002-9947},
  doi={\href {http://dx.doi.org/10.1090/S0002-9947-01-02911-7}{10.1090/S0002-9947-01-02911-7}},
}

\bib{Gl}{article}{
  author={Glimm, James G.},
  title={On a certain class of operator algebras},
  journal={Trans. Amer. Math. Soc.},
  volume={95},
  year={1960},
  pages={318--340},
}

\bib{hr}{book}{
  author={Hewitt, Edwin},
  author={Ross, Kenneth A.},
  title={Abstract harmonic analysis. Vol.~I},
  series={Grundlehren Math. Wiss.},
  volume={115},
  edition={2},
  note={Structure of topological groups, integration theory, group representations},
  publisher={Springer-Verlag, Berlin-New York},
  date={1979},
  pages={ix+519},
  isbn={3-540-09434-2},
}

\bib{Hir}{article}{
  author={Hirshberg, Ilan},
  title={On $C^*$-algebras associated to certain endomorphisms of discrete groups},
  journal={New York J. Math.},
  volume={8},
  year={2002},
  pages={99--109},
  issn={1076-9803},
  note={\href {http://nyjm.albany.edu:8000/j/2002/8_99.html}{nyjm.albany.edu:8000/j/2002/8\textunderscore 99}},
}

\bib{KOQ}{article}{
  author={Kaliszewski, Steve},
  author={Omland, Tron},
  author={Quigg, John},
  title={Cuntz-Li algebras from $a$-adic numbers},
  journal={Rev. Roumaine Math. Pures Appl.},
  volume={59},
  date={2014},
  number={3},
  pages={331--370},
  issn={0035-3965},
}

\bib{kasparov}{article}{
  author={Kasparov, Gennadi},
  title={Equivariant $KK$-theory and the Novikov conjecture},
  journal={Invent. Math.},
  volume={91},
  date={1988},
  number={1},
  pages={147--201},
  issn={0020-9910},
  doi={\href {http://dx.doi.org/10.1007/BF01404917}{10.1007/BF01404917}},
}

\bib{KatsuraIV}{article}{
  author={Katsura, Takeshi},
  title={A class of {$C^*$}-algebras generalizing both graph algebras and homeomorphism {$C^*$}-algebras. {IV}. {P}ure infiniteness},
  journal={J. Funct. Anal.},
  volume={254},
  year={2008},
  number={5},
  pages={1161--1187},
  issn={0022-1236},
  doi={\href {http://dx.doi.org/10.1016/j.jfa.2007.11.014}{10.1016/j.jfa.2007.11.014}},
}

\bib{Kir}{article}{
  author={Kirchberg, Eberhard},
  title={The classification of purely infinite C*-algebras using Kasparov's theory},
  journal={to appear in Fields Inst. Commun, Amer. Math. Soc., Providence, RI},
}

\bib{KP}{article}{
  author={Kumjian, Alex},
  author={Pask, David},
  title={Higher rank graph {$C^*$}-algebras},
  journal={New York J. Math.},
  volume={6},
  year={2000},
  pages={1--20},
  issn={1076-9803},
  note={\href {http://nyjm.albany.edu:8000/j/2000/6_1.html}{nyjm:8000/j/2000/6\textunderscore 1}},
}

\bib{Lac}{article}{
  author={Laca, Marcelo},
  title={From endomorphisms to automorphisms and back: dilations and full corners},
  journal={J. Lond. Math. Soc. (2)},
  volume={61},
  year={2000},
  number={3},
  pages={893--904},
  doi={\href {http://dx.doi.org/10.1112/S0024610799008492}{10.1112/S0024610799008492}},
}

\bib{LacRae}{article}{
  author={Laca, Marcelo},
  author={Raeburn, Iain},
  title={Semigroup crossed products and the {T}oeplitz algebras of nonabelian groups},
  journal={J. Funct. Anal.},
  volume={139},
  year={1996},
  number={2},
  pages={415--440},
  issn={0022-1236},
  doi={\href {http://dx.doi.org/10.1006/jfan.1996.0091}{10.1006/jfan.1996.0091}},
}

\bib{LarsenLi}{article}{
  author={Larsen, Nadia S.},
  author={Li, Xin},
  title={The $2$-adic ring {$C^*$}-algebra of the integers and its representations},
  journal={J. Funct. Anal.},
  volume={262},
  year={2012},
  number={4},
  pages={1392--1426},
  doi={\href {http://dx.doi.org/10.1016/j.jfa.2011.11.008}{10.1016/j.jfa.2011.11.008}},
}

\bib{Li1}{article}{
  author={Li, Xin},
  title={Semigroup {$C^*$}-algebras and amenability of semigroups},
  journal={J. Funct. Anal.},
  volume={262},
  year={2012},
  number={10},
  pages={4302--4340},
  issn={0022-1236},
  doi={\href {http://dx.doi.org/10.1016/j.jfa.2012.02.020}{10.1016/j.jfa.2012.02.020}},
}

\bib{LN2}{article}{
  author={Li, Xin},
  author={Norling, Magnus D.},
  title={Independent resolutions for totally disconnected dynamical systems {II}: {$C^*$}-algebraic case},
  journal={J. Operator Theory},
  volume={75},
  year={2016},
  number={1},
  pages={163--193},
  note={\href {http://www.theta.ro/jot/archive/2016-075-001/2016-075-001-009.pdf}{2016-075-001/2016-075-001-009}},
}

\bib{loring1}{book}{
  author={Loring, Terry A.},
  title={Lifting solutions to perturbing problems in $C^*$-algebras},
  series={Fields Inst. Monogr.},
  volume={8},
  publisher={Amer. Math. Soc., Providence, RI},
  date={1997},
  pages={x+165},
  isbn={0-8218-0602-5},
}

\bib{massey1}{article}{
  author={Massey, William S.},
  title={Exact couples in algebraic topology.~I, II},
  journal={Ann. of Math. (2)},
  volume={56},
  date={1952},
  pages={363--396},
  issn={0003-486X},
}

\bib{massey2}{article}{
  author={Massey, William S.},
  title={Exact couples in algebraic topology.~III, IV, V},
  journal={Ann. of Math. (2)},
  volume={57},
  date={1953},
  pages={248--286},
  issn={0003-486X},
}

\bib{Nic}{article}{
  author={Nica, Alexandru},
  title={$C^*$-algebras generated by isometries and {W}iener-{H}opf operators},
  journal={J. Operator Theory},
  volume={27},
  year={1992},
  number={1},
  pages={17--52},
  issn={0379-4024},
}

\bib{Oml}{article}{
  author={Omland, Tron},
  title={$C^*$-algebras associated with $a$-adic numbers},
  conference={ title={Operator algebra and dynamics}, },
  book={ series={Springer Proc. Math. Stat.}, volume={58}, publisher={Springer, Heidelberg}, },
  date={2013},
  pages={223--238},
  doi={\href {http://dx.doi.org/10.1007/978-3-642-39459-1_11}{10.1007/978-3-642-39459-1\textunderscore 11}},
}

\bib{Pas}{article}{
  author={Paschke, William L.},
  title={$K$-theory for actions of the circle group on $C^*$-algebras},
  journal={J. Operator Theory},
  volume={6},
  number={1},
  year={1981},
  pages={125--133},
}

\bib{Phi}{article}{
  author={Phillips, N. Christopher},
  title={A classification theorem for nuclear purely infinite simple $C^*$-algebras},
  journal={Doc. Math.},
  volume={5},
  year={2000},
  pages={49--114},
  issn={1431-0635},
}

\bib{PV}{article}{
  author={Pimsner, Mihai V.},
  author={Voiculescu, Dan-Virgil},
  title={Exact sequences for $K$-groups and Ext-groups of certain cross-product $C^*$-algebras},
  journal={J. Operator Theory},
  volume={4},
  number={1},
  year={1980},
  pages={93--118},
}

\bib{RS}{article}{
  author={Raeburn, Iain},
  author={Szyma{\'n}ski, Wojciech},
  title={Cuntz-Krieger algebras of infinite graphs and matrices},
  journal={Trans. Amer. Math. Soc.},
  volume={356},
  number={1},
  pages={39--59},
  year={2004},
  doi={\href {http://dx.doi.org/10.1090/S0002-9947-03-03341-5}{10.1090/S0002-9947-03-03341-5}},
}

\bib{rordam-zd}{article}{
  author={R{\o }rdam, Mikael},
  title={Classification of nuclear, simple $C^*$-algebras},
  conference={ title={Classification of nuclear $C^*$-algebras. Entropy in operator algebras} },
  book={ series={Encyclopaedia Math. Sci.}, volume={126}, publisher={Springer, Berlin}, },
  date={2002},
  pages={1--145},
  doi={\href {http://dx.doi.org/10.1007/978-3-662-04825-2_1}{10.1007/978-3-662-04825-2\textunderscore 1}},
}

\bib{savinienBellissard}{article}{
  author={Savinien, Jean},
  author={Bellissard, Jean},
  title={A spectral sequence for the $K$-theory of tiling spaces},
  journal={Ergodic Theory Dynam. Systems},
  volume={29},
  date={2009},
  number={3},
  pages={997--1031},
  issn={0143-3857},
  doi={\href {http://dx.doi.org/10.1017/S0143385708000539}{10.1017/S0143385708000539}},
}

\bib{schochet}{article}{
  author={Schochet, Claude L.},
  title={Topological methods for {$C^*$}-algebras.~I. Spectral sequences},
  year={1981},
  journal={Pacific J. Math.},
  volume={96},
  number={1},
  pages={193--211},
  note={\href {http://projecteuclid.org/euclid.pjm/1102734956}{euclid.pjm/1102734956}},
}

\bib{Sta1}{article}{
  author={Stammeier, Nicolai},
  title={On {$C^*$}-algebras of irreversible algebraic dynamical systems},
  journal={J. Funct. Anal.},
  volume={269},
  year={2015},
  number={4},
  pages={1136--1179},
  doi={\href {http://dx.doi.org/10.1016/j.jfa.2015.02.005}{10.1016/j.jfa.2015.02.005}},
}

\bib{Sta3}{article}{
  author={Stammeier, Nicolai},
  title={A boundary quotient diagram for right LCM semigroups},
  note={\href {http://arxiv.org/abs/1604.03172}{arxiv:1604.03172}},
}

\bib{Wil}{book}{
  author={Williams, Dana P.},
  title={Crossed products of $C^*$-algebras},
  series={Math. Surveys Monogr.},
  volume={134},
  publisher={Amer. Math. Soc., Providence, RI},
  date={2007},
  pages={xvi+528},
  isbn={978-0-8218-4242-3},
  isbn={0-8218-4242-0},
  doi={\href {http://dx.doi.org/10.1090/surv/134}{10.1090/surv/134}},
}

\bib{Z}{article}{
  author={Zhang, Shuang},
  title={Certain $C^*$-algebras with real rank zero and their corona and multiplier algebras.~I},
  journal={Pacific J. Math.},
  volume={155},
  number={1},
  year={1992},
  pages={169--197},
  note={\href {http://projecteuclid.org/euclid.pjm/1102635475}{euclid.pjm/1102635475}},
}

\end{biblist}
\end{document}